\newcommand\cyr{%
 \renewcommand\rmdefault{wncyr}%
 \renewcommand\sfdefault{wncyss}%
 \renewcommand\encodingdefault{OT2}%
\normalfont\selectfont} \DeclareTextFontCommand{\textcyr}{\cyr}
\newtheorem{theorem}{Theorem}
\newtheorem{lemma}[theorem]{Lemma}
\newtheorem{corollary}[theorem]{Corollary}
\begin{document}

\title{\textbf{An Exact Asymptotic for the Square Variation of Partial Sum Processes}}

\author{Allison Lewko \thanks{Supported by a National Defense Science and Engineering Graduate Fellowship} \and Mark Lewko}

\date{}
\maketitle
\begin{abstract}We establish an exact asymptotic formula for the square variation of certain partial sum processes. Let $\{X_{i}\}$ be a sequence of independent, identically distributed mean zero random variables with finite variance $\sigma$ and satisfying a moment condition $\mathbb{E}\left[ |X_{i}|^{2+\delta} \right] < \infty$ for some $\delta > 0$. If we let $\mathcal{P}_{N}$ denote the set of all possible partitions of the interval $[N]$ into subintervals, then we have that $\max_{\pi \in \mathcal{P}_{N}} \sum_{I \in \pi } | \sum_{i\in I} X_{i}|^2 \sim 2 \sigma^2N \ln \ln(N)$ holds almost surely. This can be viewed as a variational strengthening of the law of the iterated logarithm and refines results of J. Qian on partial sum and empirical processes. When $\delta = 0$, we obtain a weaker `in probability' version of the result.
\end{abstract}

\section{Introduction}

Let $\{X_{i}\}$ be a sequence of independent, identically distributed random variables with mean $\mu < \infty$. The strong law of large numbers asserts that
\[ \sum_{i=1}^{N}X_{i} \sim N\mu \]
almost surely. Without loss of generality, one can assume that $X_{i}$ are mean zero by considering $X_{i}-\mu$. If we further assume a finite variance, that is $\mathbb{E}\left[|X_{i}|^2 \right] = \sigma^2 < \infty$, the Hartman-Wintner law of the iterated logarithm \cite{HW} gives an exact error estimate for the strong law of large numbers. More precisely,
\begin{equation}
\label{lil}  \left|\sum_{i=1}^{N}X_{i} \right|^2 \leq  (2+o(1))\sigma^2 N \ln\ln (N),
\end{equation}
holds a.s., where the constant $2$ cannot be replaced by a smaller constant. That is, the quantity $\sum_{i=1}^{N}X_{i}$ gets as large/small as $\pm \sqrt{ (2-\epsilon) \sigma N \ln\ln (N)}$ infinitely often (a.s.).

The purpose of our current work is to prove a more delicate variational asymptotic that refines the law of the iterated logarithm and captures more subtle information about the oscillations of sums of i.i.d random variables about their expected values. More precisely, we prove the following theorem. We let $\mathcal{P}_{N}$ denote the set of all possible partitions of the interval $[N]$ into subintervals, and we consider each $\pi \in \mathcal{P}_N$ as a collection of disjoint intervals, the union of which is $[N]$. We write $I \in \pi$ to denote that the interval $I \subseteq [N]$ belongs to $\pi$.

\begin{theorem}\label{thm:ourtheorem} Let $\{X_{i}\}$ be a sequence of independent, identically distributed mean zero random variables with variance $\sigma$ and satisfying $\mathbb{E}\left[|X_{i}|^{2+\delta}\right] < \infty$ for some $\delta >0$. Then we have almost surely:

\begin{equation}\label{mainasy}
\max_{\pi \in \mathcal{P}_{N}} \sum_{I \in \pi } \left| \sum_{i\in I} X_{i}\right|^2 \sim 2 \sigma^2N \ln \ln(N).
\end{equation}

\end{theorem}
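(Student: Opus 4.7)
The plan is to establish separately the matching upper and lower bounds $\limsup_N V_N/(2\sigma^2 N \ln\ln N) \leq 1$ and $\liminf_N V_N/(2\sigma^2 N \ln\ln N) \geq 1$ almost surely, where $V_N := \max_{\pi \in \mathcal{P}_N} \sum_{I \in \pi} S_I^2$ with $S_I := \sum_{i \in I} X_i$. Both bounds hinge on promoting the Hartman--Wintner LIL to a uniform statement across subintervals of $[N]$, and the principal challenge is obtaining the sharp constant $2$.

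For the upper bound, my first step is a uniform maximal LIL of Erd\H{o}s--R\'enyi type: almost surely, for every $\epsilon > 0$ and all large $N$, simultaneously for every subinterval $I \subseteq [N]$ of length $|I| \geq 2$,
\[
S_I^2 \leq (2+\epsilon)\sigma^2 |I| \max\bigl(\ln\ln |I|,\; \ln(N/|I|)\bigr).
\]
I then split any partition $\pi$ into long intervals ($|I| \geq N/\ln N$) and short intervals. The long contribution is controlled by $(2+\epsilon)\sigma^2 \ln\ln N \cdot \sum_{I \text{ long}} |I| \leq (2+\epsilon)\sigma^2 N \ln\ln N$. For the short contribution, the per-interval uniform bound alone is insufficient when summed across many intervals; the $o(N \ln\ln N)$ estimate instead comes from a dyadic chaining of scales coupled to the LLN $\sum_{i \leq N} X_i^2 \sim N\sigma^2$, exploiting that the $S_I^2$ for short intervals concentrate around $|I|\sigma^2$ rather than simultaneously saturating the per-interval LIL bound.

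For the lower bound, I construct an explicit near-optimal partition for each large $N$. Setting $a_0 = 0$, I iteratively select $a_{k+1} \in (a_k, N]$ satisfying (i) $a_{k+1} - a_k \geq N^\alpha$ for a fixed small $\alpha > 0$, and (ii) $(S_{a_{k+1}} - S_{a_k})^2 \geq (2-\epsilon)\sigma^2 (a_{k+1}-a_k) \ln\ln(a_{k+1}-a_k)$. Such an $a_{k+1}$ exists via the LIL applied to the shifted walk $\{S_{a_k + n} - S_{a_k}\}_{n \geq 1}$ whenever $N - a_k$ is sufficiently large. I iterate until $N - a_k = o(N)$ and then close out the partition by any trivial completion of the leftover, whose square-variation contribution is $o(N \ln\ln N)$. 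Since the lower bound $a_{k+1}-a_k \geq N^\alpha$ forces $\ln\ln(a_{k+1}-a_k) = (1-o(1))\ln\ln N$ uniformly in $k$, telescoping gives $\sum_k (S_{a_{k+1}}-S_{a_k})^2 \geq (2-2\epsilon)\sigma^2 N \ln\ln N$.

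The main obstacle is the word \emph{uniform}: the classical LIL is a pointwise (limsup) statement at a fixed starting point, and I need quantitative versions simultaneous over all intervals $I$ (for the upper bound) and over all starting points $a_k$ (for the lower bound). I expect to address this via a Borel--Cantelli argument on dyadically sampled scales, combined with strong approximation---either Skorokhod embedding or Koml\'os--Major--Tusn\'ady---which under the $2+\delta$ moment condition gives $\max_{n \leq N}|S_n - B_n| = o(N^{1/(2+\delta)})$ for a coupled Brownian motion $B$. This coupling error is $o(\sqrt{N \ln\ln N})$ and hence negligible relative to the main term, but only when $\delta > 0$; at $\delta = 0$ the coupling is too coarse to control the sum pointwise, which accounts for the weaker in-probability conclusion noted in the abstract.
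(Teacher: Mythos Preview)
Your architecture matches the paper's, but the execution diverges: the paper never uses strong approximation. For the upper bound it classifies intervals in a maximal partition as \emph{good} ($S_I^2 \leq (2+\epsilon)|I|\ln\ln N$), \emph{medium}, or \emph{bad}; direct moment/Bernstein estimates (Rosenthal, Berry--Esseen, a Hoeffding-type maximal inequality) show the bad intervals contribute $o(N\ln\ln N)$ and the medium intervals have total \emph{length} $o(N)$, so the good intervals carry the whole $(2+\epsilon)N\ln\ln N$. For the lower bound the paper proves a quantitative estimate: for each fixed start $j$, the probability that no pair $i_1<i_2\leq N^{1-\alpha}$ satisfies $(S_{j+i_1}-S_j)^2+(S_{j+i_2}-S_{j+i_1})^2\geq(2-\epsilon)i_2\ln\ln N$ is at most $\exp(-c(\log N)^{1-\gamma})$; a Markov/Borel--Cantelli argument over a fixed family of intervals then shows the set of bad starts has density $<\epsilon$ in each, and a greedy partition finishes.

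There is a genuine gap in your lower bound. ``Such an $a_{k+1}$ exists via the LIL applied to the shifted walk'' is not justified: Hartman--Wintner is a pure $\limsup$ statement with no control on \emph{when} the $\limsup$ is approached, so the first $n$ with $(S_{a_k+n}-S_{a_k})^2\geq(2-\epsilon)\sigma^2 n\ln\ln n$ may well exceed $N-a_k$. Moreover your $a_k$ are path-dependent, so you cannot simply intersect null sets over deterministic starts. You need a quantitative failure-probability bound at each start, and even the paper's $\exp(-c(\log N)^{1-\gamma})$ is sub-polynomial, so a union bound over your $\leq N^{1-\alpha}$ random starting points still blows up --- this is exactly why the paper controls the \emph{density} of bad starts within deterministic intervals rather than excluding them pointwise.

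A second gap is in the proposed KMT transfer. The coupling bound $\max_{n\leq N}|S_n-B_n|=o(N^{1/(2+\delta')})$ is a sup-norm statement, but the triangle inequality in $V^2$ requires $\|\{X_i-(B_i-B_{i-1})\}^N\|_{V^2}=o(\sqrt{N\ln\ln N})$, and the $V^2$ norm is \emph{not} controlled by the sup norm of the running error: a partition into $k$ pieces gives only $\sum_I(S_I-B_I)^2\leq 4k\max_n|S_n-B_n|^2$, which for $k\sim N$ is $O(N^{1+2/(2+\delta')})\gg N\ln\ln N$. Your short-interval step in the upper bound (``dyadic chaining coupled to the LLN for $\sum X_i^2$'') faces the same difficulty --- the LLN controls $\sum X_i^2$ but not the maximum over partitions of $\sum_I S_I^2$ --- and this is precisely where the paper's good/medium/bad trichotomy does the real work.
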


Choosing the partition $\pi$ to contain a single interval $J=[1,N]$ immediately recovers (\ref{lil}), the upper bound in the law of the iterated logarithm.

It is unclear if the assumption $\mathbb{E}\left[|X_{i}|^{2+\delta}\right] < \infty$ can be removed. The analysis here should be able to be pushed further to handle a condition of the form $\mathbb{E}\left[|X_i|^{2}\varphi(X_i) \right]<\infty$ where $\varphi(x)$ is a positive increasing function that grows slower than $|x|^{\delta}$ for any $\delta>0$ (this requires analogous refinements of Lemma \ref{lem:Rosenthal} and Lemma \ref{lem:berry-esseen}), however this will not extend to the case when $\varphi(x)$ is bounded. Without an auxiliary moment condition, we are able to establish the following weaker `in probability' result.

\begin{theorem}Let $\{X_i\}$ be a sequence of independent, identically distributed mean zero random variables with finite variance $\sigma$. We then have that
\[ \frac{\max_{\pi \in \mathcal{P}_{N}} \sum_{I \in \pi } | \sum_{i\in I} X_{i}|^2}{2 \sigma^2 N \ln \ln(N)} \xrightarrow{p} 1.\]
\end{theorem}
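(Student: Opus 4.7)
The strategy is to reduce to Theorem~\ref{thm:ourtheorem} by truncating the $X_i$. Fix $\varepsilon > 0$. Since $\mathbb{E}[X_1^2] = \sigma^2 < \infty$, we may choose $K = K(\varepsilon)$ so large that $\mathbb{E}[X_1^2 \mathbf{1}_{|X_1| > K}] < \varepsilon$. Define
\[ Y_i := X_i \mathbf{1}_{|X_i| \leq K} - c_K, \qquad c_K := \mathbb{E}[X_1 \mathbf{1}_{|X_1| \leq K}], \qquad Z_i := X_i - Y_i. \]
The $Y_i$ form a bounded i.i.d.\ mean-zero sequence, so Theorem~\ref{thm:ourtheorem} applies to $(Y_i)$ and yields
\[ \max_{\pi \in \mathcal{P}_N} \sum_{I \in \pi} \left|\sum_{i \in I} Y_i\right|^2 \sim 2 \sigma_K^2 \, N \ln \ln N \quad \text{a.s.}, \]
where $\sigma_K^2 := \mathrm{Var}(Y_1)$ satisfies $\sigma_K^2 \to \sigma^2$ as $K \to \infty$. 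The residual $(Z_i)$ is also i.i.d.\ and mean zero, with $\mathrm{Var}(Z_1) \leq \varepsilon$, but carries only a finite second moment.

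For any $\eta \in (0,1)$, the pointwise inequalities $(a+b)^2 \leq (1+\eta) a^2 + (1 + \eta^{-1}) b^2$ and $(a+b)^2 \geq (1-\eta) a^2 - \eta^{-1} b^2$, applied to $\sum_{i \in I} X_i = \sum_{i \in I} Y_i + \sum_{i \in I} Z_i$ and summed over $I \in \pi$, yield
\[ (1-\eta) A_Y - \eta^{-1} A_Z \;\leq\; A_X \;\leq\; (1+\eta) A_Y + (1 + \eta^{-1}) A_Z, \]
where $A_W := \max_{\pi \in \mathcal{P}_N} \sum_{I \in \pi} \bigl|\sum_{i \in I} W_i\bigr|^2$ for $W \in \{X, Y, Z\}$ (the lower bound is obtained by evaluating the RHS at a partition maximizing $\sum_I |S_I(Y)|^2$). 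Combined with the asymptotic for $A_Y$, what remains is to prove a universal, variance-only upper bound for the residual: for some absolute constant $C$ (independent of the distribution of $Z_1$ beyond its variance),
\[ \mathbb{P}\bigl( A_Z \geq C \cdot \mathrm{Var}(Z_1) \cdot N \ln \ln N \bigr) \;\longrightarrow\; 0 \quad \text{as } N \to \infty. \]
Granted this bound, taking e.g.\ $\eta = \sqrt{\varepsilon}$ and then sending $K \to \infty$ (so that $\varepsilon \to 0$ and $\sigma_K^2 \to \sigma^2$) drives both sides of the sandwich to $2 \sigma^2 N \ln \ln N$ in probability, which is the claim.

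The main obstacle is the variance-only upper bound on $A_Z$. The plan here is to revisit the upper-bound portion of the proof of Theorem~\ref{thm:ourtheorem} and to replace each invocation of Rosenthal's inequality (Lemma~\ref{lem:Rosenthal}) and the Berry--Esseen estimate (Lemma~\ref{lem:berry-esseen}) by a variance-only substitute---Chebyshev's inequality, Doob's $L^2$ maximal inequality, and the Lindeberg central limit theorem. Since we now only need convergence in probability rather than almost surely, the Borel--Cantelli-type summability requirement used in the a.s.\ argument is replaced by the weaker condition that each step of the chaining be $o(1)$ in probability; it is exactly this relaxation that dispenses with the $(2+\delta)$-moment hypothesis. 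The resulting constant $C$ need not be sharp, since $C \cdot \mathrm{Var}(Z_1) \to 0$ as $\varepsilon \to 0$.
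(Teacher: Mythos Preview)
Your overall strategy---truncate, apply Theorem~\ref{thm:ourtheorem} to the bounded part, and control the residual by a non-sharp variance-only upper bound---is exactly the paper's approach. The paper, however, does not re-derive the variance-only upper bound on $A_Z$; it simply invokes Qian's result (Theorem~\ref{thm:Qianupper}), which already gives $\mathbb{P}\bigl[A_Z \geq c'\,\mathrm{Var}(Z_1)\,N\ln\ln N\bigr]\to 0$ under a finite-variance hypothesis alone. (The hypothesis $\mathbb{E}|X|^{2+\delta}<\infty$ displayed in the paper's statement of Theorem~\ref{thm:Qianupper} is a transcription slip; the surrounding text and the proof of Theorem~\ref{thm:uboundinprob} make clear that only finite variance is needed, consistent with Qian's original paper.) Your combining inequality via $(a+b)^2\le(1+\eta)a^2+(1+\eta^{-1})b^2$ is a cosmetic variant of the paper's use of the $V^2$ triangle inequality.

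One caution about your sketch for re-deriving the variance-only bound by ``downgrading'' the paper's upper-bound argument: the bad-interval step in Section~2.1 bounds $\mathbb{E}[Y_I\mathbb{I}_{\mathrm{Bad},I}]$ via H\"older with exponents $(1+\delta,\,q)$, which genuinely consumes the $(2+2\delta)$-moment through Lemma~\ref{lem:expectation}. With only a second moment you have merely $\mathbb{E}[Y_I]\lesssim |I|$, and the H\"older step degenerates (you cannot extract any decay from $\mathbb{I}_{\mathrm{Bad},I}$). So this particular substitution does not go through as written; a different organization of the chaining (closer to Qian's own argument) is needed. Since the result is already available as Theorem~\ref{thm:Qianupper}, the cleanest fix is simply to cite it, as the paper does.
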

\begin{flushleft}Here, $\xrightarrow{p}$ denotes convergence in probability.
\end{flushleft}

For the rest of the paper, we will denote $\{X_{i}\}_{i=1}^{N}$ more concisely as $\{X_{i}\}^{N}$ and write $||\{X_{i}\}^{N}||_{V^2}$ for the quantity $\sqrt{\max_{\pi \in \mathcal{P}_{N}} \sum_{I \in \pi } | \sum_{i\in I} X_{i}|^2}$. As indicated by the notation, this expression satisfies the triangle inequality: $||\{X_{i}+Y_{i}\}^{N}||_{V^2} \leq ||\{X_{i}\}^{N}||_{V^2} + ||\{Y_{i}\}^{N}||_{V^2}$. (This can be easily verified.)

\subsection{Previous Work}

In \cite{T}, Taylor proved an almost sure asymptotic for the path variation of Brownian motion. This is closely related to our main result in the case that $X_{i}$ are normally distributed. The question of the asymptotic order of $||\{X_{i}\}^{N}||_{V^2}$ in the case of more general $X_{i}$ is implicit in work of Dudley \cite{Dud} and Bretagnolle \cite{B}, where related questions about the $p$-variation of processes are studied. Dudley's interest in these $p$-variation norms stems from the fact that they majorize the (more classical) sup norm, but in many cases have nicer differentiability properties.

The most recent result we are aware of concerning our specific question appears in the work of J. Qian \cite{Q98}. There it is shown that

\begin{theorem}\label{thm:Qianlower} Let $\{X_i\}_{i=1}^\infty$ be i.i.d mean zero random variables with variance $\sigma$. Then for some constant $c$ we have that $\mathbb{P}\left[  \max_{\pi \in \mathcal{P}_{N}} \sum_{I \in \pi } | \sum_{i\in I} X_{i}|^2 \leq c \sigma^2 N \ln \ln(N)  \right] \rightarrow 0$ as $N \rightarrow \infty$.
\end{theorem}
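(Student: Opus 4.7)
My plan is to reduce Theorem~\ref{thm:Qianlower} to Taylor's theorem on the $2$-variation of Brownian motion via the Skorokhod embedding. Taylor's theorem asserts that for a standard Brownian motion $B$ on $[0,T]$, the maximum $2$-variation $\sup_\pi \sum_I (B(I))^2$ is asymptotic to $2T\ln\ln T$ almost surely (and in particular is at least $cT\ln\ln T$ with probability tending to one for every $c < 2$). Combined with a routine embedding step, this yields the desired lower bound $c\sigma^2 N \ln\ln N$ in probability for the square variation of the partial sums.

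Normalize $\sigma^2 = 1$. First, I would invoke the Skorokhod embedding to couple the partial sum process $(S_n)_{n \geq 0}$ to a standard Brownian motion $(B_t)_{t \geq 0}$ via stopping times $0 = \tau_0 \leq \tau_1 \leq \cdots$ with $S_n = B_{\tau_n}$, $\mathbb{E}[\tau_n - \tau_{n-1}] = 1$, and $\tau_n/n \to 1$ almost surely. Under this coupling, every partition $\pi = \{I_1, \ldots, I_m\}$ of $[N]$ induces a partition of $[0, \tau_N]$ whose endpoints lie in the lattice $\{\tau_0, \tau_1, \ldots, \tau_N\}$, with the square-variation sums agreeing. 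Second, I would apply Taylor's theorem to $B$ on $[0, \tau_N]$: for any $c < 2$, with high probability there is a partition of $[0, \tau_N]$ whose square variation is at least $c\tau_N \ln\ln \tau_N$. Third, I would round the partition endpoints to the nearest elements of $\{\tau_n\}$; since a near-optimal partition uses $O(\ln \tau_N)$ breakpoints and the lattice spacings $\tau_n - \tau_{n-1}$ are controlled in $L^2$, this rounding loses at most $O(\ln \tau_N)$, negligible compared to $\tau_N \ln\ln \tau_N \sim N \ln\ln N$.

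\textbf{Main obstacle.} The substantive content is Taylor's theorem itself, which is a delicate multi-scale statement about Brownian paths. Its proof exploits the self-similarity of Brownian motion to exhibit partitions that extract, at geometrically spaced scales, subintervals whose increments are inflated by the Gaussian-maximum factor, and pieces these subintervals together so as to accumulate a $\ln\ln T$ factor in the total squared variation. The Skorokhod-rounding step is standard and rests on second-moment bounds for $\tau_n - n$, which follow from finite variance of the $X_i$; no higher moment assumption is required for the ``in probability'' conclusion.
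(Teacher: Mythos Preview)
The paper does not prove Theorem~\ref{thm:Qianlower} itself --- it is quoted from Qian~\cite{Q98}. The closest thing to a ``paper's own proof'' is the direct construction in Section~3 (Theorem~\ref{thm:lowerbound}), which builds an explicit partition of $[N]$ using geometric scales $m_k=s^k$ and the Galambos-type LIL estimate (Lemma~\ref{lem:inequality}), without ever passing through Brownian motion. Your route via Skorokhod embedding and Taylor's theorem is thus genuinely different from what the paper does, and in principle it is a natural strategy. However, the rounding step as you describe it has two real gaps.

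First, the assertion that ``a near-optimal partition uses $O(\ln \tau_N)$ breakpoints'' is not correct. Taylor's lower-bound construction (and the analogous construction in Section~3 of this paper) tiles $[0,T]$ by repeatedly extracting subintervals of length at most $T^{1-\alpha}$ at each step, so the near-optimal partition has on the order of $T^\alpha$ pieces, not logarithmically many. A single interval achieves $(2-\epsilon)T\ln\ln T$ only along the sparse subsequence of LIL maxima, so you cannot expect a bounded or logarithmic number of pieces to work for \emph{all} large $T$. With polynomially many breakpoints, the rounding error budget becomes much tighter.

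Second, the claim that ``the lattice spacings $\tau_n-\tau_{n-1}$ are controlled in $L^2$, which follow from finite variance of the $X_i$'' is false. Skorokhod gives $\mathbb{E}[\tau_1]=\mathbb{E}[X_1^2]$, but $\mathbb{E}[\tau_1^2]<\infty$ requires $\mathbb{E}[X_1^4]<\infty$, which you do not have. Under a bare second-moment hypothesis you only know $\tau_n/n\to 1$ a.s., and the maximal spacing $\max_{n\le N}(\tau_n-\tau_{n-1})$ can be as large as $o(N)$ with no useful rate. Combined with the first issue, the error term $\sum_j |(B_{t_j}-B_{t_{j-1}})^2-(S_{n_j}-S_{n_{j-1}})^2|$ is not obviously $o(N\ln\ln N)$. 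One way to rescue the approach is to first truncate the $X_i$ (as the paper does in Section~\ref{sec:medium} and in the proof of Theorem~\ref{thm:uboundinprob}), which restores all moments; but then the Skorokhod step is doing no more work than the paper's direct argument, and the rounding still needs a careful count of breakpoints.
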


\begin{theorem}\label{thm:Qianupper} Let $\{X_i\}_{i=1}^\infty$ be i.i.d mean zero random variables with variance $\sigma$ and $\mathbb{E}\left[|X_{i}|^{2+\delta}\right] < \infty$ for some $\delta >0$. For some constant $c'$ we have that \\ $\mathbb{P}\left[  \max_{\pi \in \mathcal{P}_{N}} \sum_{I \in \pi } | \sum_{i\in I} X_{i}|^2 \geq c' \sigma^2 N \ln \ln(N)  \right] \rightarrow 0$ as $N \rightarrow \infty$.
\end{theorem}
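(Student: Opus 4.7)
The plan is to combine a standard truncation with a dyadic chaining bound. Choose the threshold $T := N^{1/(2+\delta)}$ (where $\delta > 0$ is as in the hypothesis) and decompose $X_i = Y_i + Z_i$, where $Y_i := X_i \mathbf{1}_{|X_i| \leq T} - \mathbb{E}[X_i \mathbf{1}_{|X_i| \leq T}]$ is bounded and mean-zero, and $Z_i$ is the complementary piece (which is again mean-zero). The triangle inequality for $\|\cdot\|_{V^2}$ noted in the introduction reduces the problem to bounding the $V^2$-norms of $\{Y_i\}$ and $\{Z_i\}$ separately.

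For the tail term, the elementary inequality $\sum_{I \in \pi}(\sum_{i \in I} a_i)^2 \leq (\sum_i a_i)^2$ (valid for nonnegative $a_i$, by expanding both sides) together with $|\sum_{i \in I} Z_i| \leq \sum_{i \in I}|Z_i|$ gives $\|\{Z_i\}^N\|_{V^2}^2 \leq (\sum_i |Z_i|)^2$. Using the moment estimate $\mathbb{E}[|X_i|\mathbf{1}_{|X_i|>T}] \leq \mathbb{E}[|X_i|^{2+\delta}]/T^{1+\delta}$ and the analogous bound on $\mathbb{E}[|Z_i|^2]$, the right-hand side has expectation $O(N^{2/(2+\delta)}) = o(N \ln \ln N)$, so Markov's inequality gives the desired in-probability estimate for the tail.

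For the bounded piece, a dyadic chaining argument is natural. For each scale $k = 0, 1, \ldots, \lfloor \log_2 N \rfloor$, let $M_k := \max_j |S^Y_{(j+1)2^k} - S^Y_{j\cdot 2^k}|$ denote the maximum of block sums over the $\lfloor N/2^k \rfloor$ dyadic intervals of length $2^k$, with $S^Y_n := \sum_{i=1}^n Y_i$. A Bernstein-type inequality (valid since $|Y_i| \leq 2T$), combined with a union bound over dyadic blocks and scales, gives with high probability that
\[ M_k^2 \leq C\sigma^2 2^k \left( \ln(N/2^k) + \ln\ln N \right) \qquad \text{for all } k. \]
Given an arbitrary partition $\pi$, one approximates each endpoint by a nearby dyadic point at a scale proportional to $\log_2 |I_j|$, telescopes the resulting differences, and exploits the combinatorial fact that $\pi$ contains at most $\lfloor N/2^k \rfloor$ intervals of length at least $2^k$.

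The main obstacle is to organize this chaining so that the final estimate involves only $\ln \ln N$ and not $\ln N$: a naive scale-by-scale union bound weighted by the number of intervals at each scale produces an extra $\ln N$ from the $\ln(N/2^k)$ term in $M_k$. The resolution is to split into two regimes, separated at the scale $2^{k_0} \approx T^2 \ln \ln N$ where Bernstein transitions from its sub-Gaussian to its sub-exponential behavior. In the ``few large intervals'' regime ($|I| \geq 2^{k_0}$), the $\ln \ln N$ term dominates $M_k^2$ and summing $|I| \cdot \ln\ln N$ over the at most $\lfloor N/2^k \rfloor$ such intervals gives $O(\sigma^2 N \ln\ln N)$ by the combinatorial constraint $\sum_I |I| \leq N$. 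In the ``many small intervals'' regime ($|I| < 2^{k_0}$), one instead bounds the partition sum by its mean $\sigma^2 N$ plus a fluctuation term controlled via orthogonality and a second-moment bound, exploiting that $\sum_I (\sum_{i \in I} Y_i)^2 = \sum_i Y_i^2 + 2\sum_{i<j,\, \text{same interval}} Y_i Y_j$ and that the first piece concentrates at $\sigma^2 N$. Matching the two regimes at $k_0$ cleanly, and verifying that the chaining error incurred by dyadic rounding of arbitrary endpoints is absorbed into these bounds, is the technical heart of the argument.
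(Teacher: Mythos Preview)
Note first that the paper does not itself prove this statement; Theorem~\ref{thm:Qianupper} is quoted from Qian~\cite{Q98}, and the paper's own contribution is the stronger almost-sure result with sharp constant (Theorem~\ref{thm:ubound}, Section~2). Comparing your sketch to that argument, there is a concrete gap in your large-interval regime. You assert that for $k\ge k_0$ with $2^{k_0}\approx T^2\ln\ln N=N^{2/(2+\delta)}\ln\ln N$, ``the $\ln\ln N$ term dominates $M_k^2$''. But the union bound over the $N/2^k$ blocks at scale $k$ gives only $M_k^2\le C\sigma^2 2^k\bigl(\ln(N/2^k)+\ln\ln N\bigr)$, and at $k=k_0$ one has $\ln(N/2^{k_0})\sim\tfrac{\delta}{2+\delta}\ln N\gg\ln\ln N$; indeed $\ln(N/2^k)$ dominates throughout $k_0\le k\le\log_2 N-\log_2\ln N$. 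This is not slack in the estimate: with $N/2^k$ independent blocks each having a Gaussian-like tail, the maximum $M_k^2$ is genuinely of order $2^k\ln(N/2^k)$. Inserting the correct bound on $M_k^2$ into your combinatorial constraint $\sum_I|I|\le N$ yields only $O(N\ln N)$, not $O(N\ln\ln N)$. Your small-interval piece is likewise incomplete: once the partition is chosen to \emph{maximize} the cross terms in $\sum_I S_I^2=\sum_i Y_i^2+2\sum_{i<j}Y_iY_j$, a second-moment bound valid for a fixed partition does not control them.

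What the paper does instead (Section~2) is abandon any uniform-in-$I$ bound. Intervals are classified as good ($S_I^2\le(2+\epsilon)|I|\ln\ln N$), medium, or bad ($S_I^2>B|I|\ln\ln N$). Good intervals in any partition contribute at most $(2+\epsilon)N\ln\ln N$ trivially. Bad intervals \emph{do} exist --- this is precisely why your uniform bound fails --- but their aggregate contribution is $o(N\ln\ln N)$: this is obtained via H\"older, pairing the moment bound $\mathbb E[Y_I^{1+\delta}]\le C|I|^{1+\delta}$ (Lemma~\ref{lem:expectation}, from Doob and Rosenthal) with a tail bound on $\mathbb P[I\text{ is bad}]$ (from Etemadi and Berry--Esseen). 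Medium intervals are shown to occupy total length $o(N)$ after a reduction to bounded variables (Lemma~\ref{lem:max}). The idea missing from your sketch is that one cannot prevent some intervals from violating $S_I^2\lesssim|I|\ln\ln N$; one must instead bound the total contribution of the violators.
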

These results have been used in \cite{J} to show that that certain
variation operators which generalize (and majorize) classical maximal
operators arising in harmonic analysis are unbounded on certain $L^p$ spaces.

Notice that Qian's upper bound does not require a moment condition with $\delta >0$, but her lower bound does. Our results improve on these by establishing the exact constants as well as improving convergence in probability to almost sure convergence when $\delta >0$. When only the second moment is finite, we obtain convergence in probability results with the exact asymptotic constant.

\subsection{Notation} In our proofs below, we will often fix a positive integer $N$ and use $[N]$ to denote the set of integers $\{1, \ldots, N\}$. When we say $I$ is a subinterval of $[N]$, we mean that $I = (i_s, i_e]$ for some real numbers $0 \leq i_s \leq i_e \leq N$. We denote the length of the interval $I$ by $|I|$ (i.e. $|I| = i_e-i_s$). When $i_s$ and $i_e$ are integers, this quantity is equal to the number of positive integers contained in $I$. When we say $I' \subseteq I$ ($I'$ is a subinterval of $I$), we mean that $I' = (i'_s, \ldots, i'_e]$ for some real numbers $i'_s, i'_e$ satisfying $i_s \leq i'_s \leq i'_e \leq i_e$.

We will consider independent, identically distributed random variables $X_1, X_2, \ldots$. We will routinely use $S_\ell$ to denote the partial sum $S_\ell = X_1 + \cdots + X_\ell$. For an interval $I$, we use $S_I$ to denote $S_I:= \sum_{i \in I} X_i$ (i.e. the partial sum of the variables $X_i$ whose indices $i$ are contained in the interval $I$).
We use $\ln$ to denote the natural logarithm and $\log$ to denote the base 2 logarithm. We use $exp(x)$ as an alternate notation for $e^x$.

In our proofs, we will often refer to ``constants" whose values depend on $\delta$ (and only on $\delta$). We will often not reflect this dependence in our notation. Throughout our proofs, $\delta$ should be thought of as a fixed, positive constant.

\section{The Upper Bound}
In this section, we prove the following theorem:

\begin{theorem}\label{thm:ubound} We let $X_1, X_2, \ldots$ denote independent, identically distributed random variables with $\mathbb{E}[X_i] = 0$ and $Var[X_i]=1$. We further assume that $\mathbb{E}[|X_i|^{2(1+\delta)}] < \infty$, for some $\delta >0$. Then, for every $\epsilon > 0$,
\[\limsup_{N \rightarrow \infty} \frac{\left|\left| \{X_i\}^N \right| \right|^2_{V^2}}{N \ln \ln N} \leq 2(1+\epsilon) \emph{ a.s.} \]
\end{theorem}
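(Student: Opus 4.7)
My plan is to prove Theorem \ref{thm:ubound} by reducing from arbitrary partitions to a dyadic multi-scale structure, applying Rosenthal's inequality and Berry-Esseen to obtain tail estimates with the sharp Gaussian constant, and then invoking Borel-Cantelli along a lacunary subsequence of $N$.

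First I would note that $\|\{X_i\}^N\|_{V^2}^2$ is monotone non-decreasing in $N$: any partition of $[M]$ extends to one of $[N]$ for $N>M$ by appending singleton intervals, which only adds non-negative terms to the sum. So it suffices to establish the bound almost surely along a lacunary subsequence $N_j = \lfloor (1+\eta)^j \rfloor$ for a small $\eta = \eta(\epsilon)>0$; for $N \in (N_{j-1},N_j]$, monotonicity combined with the slow growth $N_j/N_{j-1} \to 1$ absorbs the difference between $1+\epsilon/2$ and $1+\epsilon$. This reduces the claim to the summability in $j$ of $\mathbb{P}(\|\{X_i\}^{N_j}\|_{V^2}^2 > 2(1+\epsilon/2) N_j \ln\ln N_j)$, after which Borel-Cantelli finishes.

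Second, fixing $N = N_j$, I would decompose an arbitrary partition $\pi$ by the dyadic scale of its intervals: $\pi_k := \{ I \in \pi : 2^k \leq |I| < 2^{k+1}\}$. The intervals in each $\pi_k$ are pairwise disjoint with total length at most $N$, so $|\pi_k| \leq N/2^k$. This yields
\[
\|\{X_i\}^N\|_{V^2}^2 \;\leq\; \sum_{k=0}^{\lfloor \log N \rfloor} \max_{\Sigma_k} \sum_{I \in \Sigma_k} |S_I|^2,
\]
where $\Sigma_k$ ranges over collections of at most $N/2^k$ disjoint subintervals of $[N]$ each of length in $[2^k,2^{k+1})$. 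For any fixed $\Sigma_k$ the variables $\{S_I\}_{I \in \Sigma_k}$ are independent; since $\mathbb{E}[|S_I|^2]=|I|$, the quantity $\sum_{I \in \Sigma_k}(|S_I|^2 - |I|)$ is a sum of independent mean-zero random variables, to which Rosenthal's inequality (Lemma \ref{lem:Rosenthal}) applies under $\mathbb{E}[|X_i|^{2(1+\delta)}]<\infty$. Combining the Rosenthal moment bounds with a Berry-Esseen comparison (Lemma \ref{lem:berry-esseen}) for the individual $S_I$ normalized by $\sqrt{|I|}$, I would extract the sharp Gaussian tail constant, showing that $\mathbb{P}(\sum_{I \in \Sigma_k}|S_I|^2 > \sum_{I \in \Sigma_k}|I| + 2(1+\epsilon/4)|\Sigma_k|\, 2^k \ln\ln N)$ is small enough to survive a union bound over the $\binom{N}{|\Sigma_k|} \leq (eN/|\Sigma_k|)^{|\Sigma_k|}$ choices of $\Sigma_k$ at each scale.

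Third, summing over the $\log N$ scales gives $\sum_k 2(1+\epsilon/4)|\Sigma_k|2^k \ln\ln N \leq 2(1+\epsilon/2) N \ln\ln N$ once a small budget is absorbed for the lower-order terms $\sum_I |I|$. The main obstacle lies in Step 2: I need the scale-$k$ tail probability to beat the combinatorial entropy $\exp(|\Sigma_k|\log(eN/|\Sigma_k|))$ by a factor summable over both $j$ and $k$, while simultaneously producing the sharp constant $2$ instead of some larger constant. The $2(1+\delta)$-moment hypothesis is exactly what is needed to control the Rosenthal residual term (which would otherwise spoil the constant), while the Berry-Esseen step provides the Gaussian exponent $e^{-t^2/2}$ responsible for the factor $2$. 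Balancing these two ingredients at every dyadic scale without incurring a $\log N$ loss in the final constant is the technical heart of the proof.
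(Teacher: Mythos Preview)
Your Step~2 contains a genuine gap: the union bound over collections $\Sigma_k$ cannot succeed. Take the scale $k$ with $2^k \sim N/2$ and $|\Sigma_k|=1$. There are $\Theta(N)$ choices for the single interval $I$, while even an exact Gaussian tail gives only
\[
\mathbb{P}\bigl(|S_I|^2 > 2(1+\epsilon/4)|I|\ln\ln N\bigr) \;\asymp\; (\ln N)^{-(1+\epsilon/4)}.
\]
The union bound therefore produces $N(\ln N)^{-(1+\epsilon/4)}\to\infty$, not anything summable in $j$. More generally, the entropy per interval in $\Sigma_k$ is of order $\log(N/|\Sigma_k|)$, while the best exponent you can extract from Berry--Esseen (or from a chi-square tail for $\sum_I S_I^2/|I|$) is of order $\ln\ln N$ per interval. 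Since $\log N\gg\ln\ln N$, the entropy dominates at every scale where $|\Sigma_k|$ is not close to its maximum $N/2^k$. Rosenthal's inequality does not help here: under a mere $(2+2\delta)$-moment assumption it gives only a polynomial moment bound on $\sum_I(|S_I|^2-|I|)$, hence a polynomial tail, which is even weaker against exponential entropy.

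There is also a secondary inconsistency in Step~3: you sum $\sum_k |\Sigma_k|2^k\le N$, which presupposes that the $\Sigma_k$ come from a \emph{single} partition, but in Step~2 you maximize over $\Sigma_k$ independently at each scale. These two readings are incompatible; the independent-max reading (needed for the union bound) forfeits the length constraint and would incur a $\log N$ loss in the final constant.

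The paper avoids any union bound over partitions. It classifies intervals by the ratio $S_I^2/(|I|\ln\ln N)$ into \emph{good} ($\le 2+\epsilon$), \emph{medium} ($\in(2+\epsilon,B]$), and \emph{bad} ($>B$). Good intervals already obey the target bound. Medium intervals are handled after truncating to bounded variables: a Bernstein-type maximal inequality shows that, within a fixed $(1+\epsilon')$-adic family $\mathcal{H}$, the total length of medium intervals is $o(N)$ a.s., so their contribution is $o(N\ln\ln N)$. Bad intervals are embedded in a fixed dyadic family $\mathcal{F}\cup\mathcal{F}_s$ of size $O(N)$; H\"older combined with Rosenthal and Berry--Esseen bounds $\mathbb{E}[Y_I\mathbb{I}_{\mathrm{Bad},I}]$, and then Kronecker's lemma (not Borel--Cantelli on a lacunary subsequence) yields the a.s.\ conclusion. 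The essential point is that the paper only ever works with a fixed family of $O(N)$ intervals and uses maximal inequalities (Etemadi, Doob) to pass from that family to arbitrary subintervals; it never union-bounds over the $2^{N-1}$ partitions or over all interval positions.
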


The proof of this theorem will proceed in several stages. First, we will fix $\epsilon > 0$ and classify intervals $I \subseteq [N]$ into three disjoint categories, ``good," ``medium", and ``bad". (This same strategy is used in \cite{T}.) We say an interval $I$ is ``good" if
\begin{equation}\label{def:good}
S_I^2 \leq (2+ \epsilon) |I| \ln \ln N.
\end{equation}
We say it is ``medium" if
\begin{equation}\label{def:medium}
(2+ \epsilon) |I| \ln \ln N < S_I^2 \leq B|I| \ln \ln N,
\end{equation}
and it is ``bad" if
\begin{equation}\label{def:bad}
S_I^2 > B |I| \ln \ln N.
\end{equation}
The precise value of the parameter $B$ will be chosen later. For now, we simply think of it as a constant depending only on $\delta$.

We will deal with each class of intervals separately. We begin by considering the contribution of the bad intervals to the value of $\left| \left| \{X_i\}^N \right| \right|^2_{V^2}$.

\subsection{The Bad Intervals}
To suitably bound the contribution of the bad intervals, we will begin by essentially reducing the space of allowable partitions. We assume for simplicity that $N$ is a power of two, denoted by $N = 2^n$. We will later argue that our results extend to all positive integers $N$. When $N = 2^n$, we consider intervals of the form
\[((c-1)2^i, c2^i], \; i \in \{0, 1, \ldots, n\}, \; c \in \{1,\ldots, 2^{n-i}\}.\] This gives us $n+1 = \log(N)+1$ levels of intervals, where the $i^{th}$ level contains $2^{n-i}$ disjoint intervals, each of size $2^i$. We now augment this family of intervals by adding ``half shifts" of each level. More precisely, we also consider intervals of the form
\[((c-1)2^i + 2^{i-1}, c2^i + 2^{i-1}], \; i \in \{1, \ldots, n-1\}, \; c \in \{1,\ldots, 2^{n-i}-1\}.\]
This approximately doubles our total number of intervals. We will call the first family of intervals $\mathcal{F}$, and the second family of intervals $\mathcal{F}_{s}$. $\mathcal{F}$ includes $n+1$ levels of intervals, while $\mathcal{F}_{s}$ includes $n-1$ levels of intervals. Within each family at each level, the intervals are disjoint.

We now show:
\begin{lemma}\label{lem:intervals} Let $I' \subseteq [N]$ denote an arbitrary interval. There exists some interval $I \in \mathcal{F} \cup \mathcal{F}_{s}$ such that $I' \subseteq I$ and $|I| < 4|I'|$.
\end{lemma}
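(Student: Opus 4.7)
The plan is to pick a level $j$ based on $\ell = |I'|$ and then locate a suitable interval of length $2^j$ in $\mathcal{F} \cup \mathcal{F}_s$ that contains $I'$. Writing $I' = (a,b]$ with $\ell = b - a$, I first dispose of the case $\ell > N/4$ by taking $I = (0, N] \in \mathcal{F}$, for which trivially $|I| = N < 4\ell$. Otherwise, let $j$ be the unique positive integer satisfying $2\ell \le 2^j < 4\ell$; the hypothesis $\ell \le N/4$ forces $1 \le j \le n - 1$, so both $\mathcal{F}$ and $\mathcal{F}_s$ do contribute intervals at level $j$.

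The key structural observation is that at level $j$, the left endpoints of intervals in $\mathcal{F}$ are exactly the even multiples of $2^{j-1}$ in $[0, N - 2^j]$ (i.e., the multiples of $2^j$), while the left endpoints in $\mathcal{F}_s$ are exactly the odd multiples of $2^{j-1}$ in $[2^{j-1}, N - 3 \cdot 2^{j-1}]$. Taken together, $\mathcal{F} \cup \mathcal{F}_s$ yields a valid left endpoint at every multiple of $2^{j-1}$ in the set $\{0, 2^{j-1}, 2\cdot 2^{j-1}, \ldots, N - 2^j\}$; the only multiple of $2^{j-1}$ in $[0, N)$ that is not a valid left endpoint is the single value $N - 2^{j-1}$ at the right edge.

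Given this, let $s$ be the largest multiple of $2^{j-1}$ with $s \le a$. If $s \le N - 2^j$, take $I = (s, s + 2^j] \in \mathcal{F} \cup \mathcal{F}_s$; then $a - s < 2^{j-1}$ combined with $\ell \le 2^{j-1}$ gives $b = a + \ell < s + 2^j$, so $I' \subseteq I$. In the remaining case $s = N - 2^{j-1}$, fall back to $I = (N - 2^j, N] \in \mathcal{F}$; here $a \ge s > N - 2^j$ and $b \le N$ immediately yield $I' \subseteq I$. In both cases $|I| = 2^j < 4\ell$, which is exactly the required bound.

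The only real obstacle is the boundary bookkeeping: identifying precisely which multiples of $2^{j-1}$ arise as left endpoints of the two families, and recognizing that the single exceptional value $N - 2^{j-1}$ must be handled by the fallback to the rightmost dyadic interval at level $j$. Once this combinatorial structure of the left endpoints is in hand, both the inclusion $I' \subseteq I$ and the length bound $|I| < 4|I'|$ follow at once from the choice of $j$.
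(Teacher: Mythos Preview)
Your proof is correct and follows essentially the same approach as the paper: both choose the dyadic level $j$ (the paper's $i+1$) with $2|I'| \le 2^j < 4|I'|$, handle the trivial case $|I'| > N/4$ by taking $I=(0,N]$, and then find a containing interval of length $2^j$ in $\mathcal{F}\cup\mathcal{F}_s$. The only cosmetic difference is that you argue via the grid of left endpoints (multiples of $2^{j-1}$) with an explicit fallback at $s=N-2^{j-1}$, whereas the paper splits into ``$I'$ lies in a level-$j$ $\mathcal{F}$ interval'' versus ``$I'$ straddles a level-$j$ $\mathcal{F}$ endpoint'' and uses the $\mathcal{F}_s$ interval centered at that endpoint; these are two phrasings of the same observation.
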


\begin{proof} We define $i$ to be the non-negative integer satisfying $2^{i-1} < |I'| \leq 2^i$. If $i \geq n-1$, then $I:= [1, \ldots, N] \in \mathcal{F}$ suffices. For $i < n-1$, we consider the intervals in $\mathcal{F} \cup \mathcal{F}_{s}$ of length $2^{i+1}$. There are two cases: either $I'$ is contained in some $I \in \mathcal{F}$ of size $2^{i+1}$ (and therefore, we are done), or $I'$ must contain a right endpoint of some $I' \in \mathcal{F}$ of size $2^{i+1}$. In other words, $I'$ contains $c2^{i+1}$ for some $c \in \{1, \ldots, 2^{n-i-1}-1\}$. Since $|I'| \leq 2^i$, this implies that $I' \subseteq (c2^{i+1}- 2^i, c2^{i+1} + 2^i)$. We can alternatively express this as:
\[I' \subseteq ((c-1)2^{i+1}+ 2^i, c2^{i+1} + 2^i] \in \mathcal{F}_{s}.\]
So in both cases, we have an $I \in \mathcal{F} \cup \mathcal{F}_{s}$ such that $I' \subseteq I$ and $|I| < 4|I'|$.
\end{proof}

For the purpose of bounding the contribution of bad intervals, this allows us to consider only intervals in $\mathcal{F} \cup \mathcal{F}_{s}$ (to some extent). More precisely, for each interval $I \in \mathcal{F} \cup \mathcal{F}_{s}$, we will consider the random variable \[\max_{I' \subseteq I} S_{I'}^2.\] If some $I' \subseteq I$ of size $|I'| > \frac{1}{4}|I|$ is bad, meaning that $|S_{I'}|^2 > B |I'| \ln \ln N$, then
\[\max_{I' \subseteq I} S_{I'}^2 > \frac{B}{4} |I| \ln \ln N.\] To enable us to later consider values of $N$ which are not powers of two, we actually consider ``badness" with respect to $N/2$ instead of $N$. More precisely, we let $\mathbb{I}_{Bad, I}$ denote the indicator variable of the event
\[\max_{I' \subseteq I} S_{I'}^2 > \frac{B}{8} |I| \ln \ln N.\]
for a particular interval $I$. Here we have used the very loose bound that $\ln \ln (N/2)\geq \frac{1}{2} \ln \ln N$, for $N \geq 4$. Note that $I$ can contain a subinterval of size $> \frac{1}{4}|I|$ which is bad with respect to $N/2$ (or anything between $N/2$ and $N$) only when this event occurs.

It is then clear that the contribution of the bad intervals to the value of $\left| \left| \{X_i\}^N \right| \right|^2_{V^2}$ is upper bounded by:
\begin{equation}\label{badcontribution0}
3 \sum_{I \in \mathcal{F} \cup \mathcal{F}_{s}} \max_{I' \subseteq I} S_{I'}^2 \cdot \mathbb{I}_{Bad, I}.
\end{equation}
To see this, note that each bad interval $I'$ in the partition achieving the maximal value is contained in some $I \in \mathcal{F} \cup \mathcal{F}_{s}$ such that $\mathbb{I}_{Bad,I} = 1$. Since the intervals in the maximal partition must be disjoint, each such $I$ will only be associated with at most $3$ $I'$'s. Thus, to control the contribution of the bad intervals, it suffices to prove a suitable upper bound on (\ref{badcontribution0}) that holds almost surely. As a shorthand notation, we define the variable $Y_I := \max_{I' \subseteq I} S_{I'}^2$. Then the quantity we need to bound can be written a bit more succintly as:
\begin{equation} \label{badcontribution}
3 \sum_{I \in \mathcal{F} \cup \mathcal{F}_{s}} Y_I \cdot \mathbb{I}_{Bad, I}.
\end{equation}

To bound (\ref{badcontribution}), we will rely on several standard lemmas.

%
%
%

\begin{lemma}\label{lem:etemadi} (Etemadi's Inequality - Theorem 1 in \cite{E}) Let $X_1, X_2, \ldots, X_k$ denote independent random variables and let $a > 0$. Let $S_\ell := X_1 + \cdots + X_\ell$ denote the partial sum. Then:
\[\mathbb{P} [\max_{1 \leq \ell \leq k} |S_\ell| \geq 3 a] \leq 3 \max_{1 \leq \ell \leq k} \mathbb{P}[|S_\ell| \geq a].\]
\end{lemma}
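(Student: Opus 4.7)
The plan is to run the standard first-passage argument. Define the stopping time $\tau = \min\{\ell \leq k : |S_\ell| \geq 3a\}$, with the convention $\tau = \infty$ if no such $\ell$ exists, and set $A_j = \{\tau = j\}$. The events $A_1, \ldots, A_k$ are disjoint and their union is exactly $\{\max_{1 \leq \ell \leq k} |S_\ell| \geq 3a\}$, so it suffices to bound $\sum_{j=1}^k \mathbb{P}[A_j]$.

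The first step is a deterministic inclusion. On $A_j$ we have $|S_j| \geq 3a$, so the reverse triangle inequality $|S_k - S_j| \geq |S_j| - |S_k|$ forces $|S_k - S_j| > 2a$ whenever $|S_k| < a$. Hence
\[A_j \subseteq \{|S_k| \geq a\} \cup \{|S_k - S_j| \geq 2a\}.\]
The second step is independence: since $A_j$ depends only on $X_1, \ldots, X_j$ while $S_k - S_j = X_{j+1} + \cdots + X_k$ depends only on later variables, the events $A_j$ and $\{|S_k - S_j| \geq 2a\}$ are independent. The third ingredient is the trivial observation $\{|S_k - S_j| \geq 2a\} \subseteq \{|S_k| \geq a\} \cup \{|S_j| \geq a\}$, which yields $\mathbb{P}[|S_k - S_j| \geq 2a] \leq 2M$, where $M := \max_{1 \leq \ell \leq k} \mathbb{P}[|S_\ell| \geq a]$.

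Combining these three ingredients, I would estimate
\[\mathbb{P}[A_j] \leq \mathbb{P}\bigl[A_j \cap \{|S_k| \geq a\}\bigr] + \mathbb{P}[A_j] \cdot 2M,\]
and then sum over $j$, using disjointness of the $A_j$ to collapse $\sum_j \mathbb{P}[A_j \cap \{|S_k| \geq a\}] \leq \mathbb{P}[|S_k| \geq a] \leq M$. Writing $P := \mathbb{P}[\max_\ell |S_\ell| \geq 3a]$ this gives $P \leq M + 2MP$. If $M \geq 1/3$ the desired conclusion $P \leq 3M$ is immediate since $P \leq 1$. Otherwise, rearranging yields $P \leq M/(1-2M) \leq 3M$, where the last inequality is the elementary fact that $1 \leq 3(1-2M)$ whenever $M \leq 1/3$.

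The only genuine subtlety is the split-at-the-stopping-time step: one must recognize that the event $\{\tau = j\}$ is a function of $X_1, \ldots, X_j$ only, so the post-$\tau$ increment $S_k - S_j$ is truly independent of it. Everything else is bookkeeping with the triangle inequality and a one-line algebraic manipulation at the end to convert the implicit bound $P(1 - 2M) \leq M$ into the clean form $P \leq 3M$.
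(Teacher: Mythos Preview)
The paper does not supply its own proof of this lemma: it is stated as a citation (``Theorem~1 in \cite{E}'') and used as a black box. Your argument is correct and is essentially Etemadi's original first-passage proof, so there is nothing to compare against and nothing to fix.
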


\begin{lemma}\label{lem:Doob1}(Doob) Let $\{M_i\}_{i=1}^L$ be a submartingale taking nonnegative real values, and $p >1$. Then:
\[\mathbb{E}\left[\left(\max_{1 \leq \ell \leq L} M_\ell\right)^p\right] \leq \left(\frac{p}{p-1}\right)^p \mathbb{E}[M_L^p].\]
\end{lemma}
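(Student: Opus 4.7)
The plan is to follow the classical proof: first establish Doob's weak maximal inequality via a stopping-time argument, then bootstrap to the $L^p$ estimate using the layer-cake formula and Hölder's inequality, with a truncation step at the end to handle integrability.

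For the weak maximal inequality, I would show that for every $\lambda > 0$,
\[\lambda \, \mathbb{P}\bigl[\max_{1 \leq \ell \leq L} M_\ell \geq \lambda\bigr] \leq \mathbb{E}\bigl[M_L \cdot \mathbb{I}_{\{\max_\ell M_\ell \geq \lambda\}}\bigr].\]
Let $\tau := \min\{\ell : M_\ell \geq \lambda\}$ (with $\min \emptyset = \infty$), partition the event $\{\max_\ell M_\ell \geq \lambda\}$ into the disjoint pieces $\{\tau = k\}$ for $k = 1, \ldots, L$, and observe that $\{\tau = k\} \in \mathcal{F}_k$. The submartingale property then gives $\mathbb{E}[M_L \mathbb{I}_{\{\tau = k\}}] \geq \mathbb{E}[M_k \mathbb{I}_{\{\tau = k\}}] \geq \lambda \, \mathbb{P}[\tau = k]$; summing over $k$ yields the bound.

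Writing $M^* := \max_{1 \leq \ell \leq L} M_\ell$ and $M^*_K := M^* \wedge K$ for a positive constant $K$, I would next apply the layer-cake formula, invoke the weak maximal inequality for each $\lambda \leq K$, and swap the order of integration by Fubini:
\[\mathbb{E}[(M^*_K)^p] = \int_0^K p \lambda^{p-1} \mathbb{P}[M^* \geq \lambda] \, d\lambda \leq \int_0^K p \lambda^{p-2} \mathbb{E}\bigl[M_L \mathbb{I}_{\{M^* \geq \lambda\}}\bigr] \, d\lambda = \frac{p}{p-1} \mathbb{E}\bigl[M_L (M^*_K)^{p-1}\bigr].\]
Hölder's inequality with conjugate exponents $p$ and $p/(p-1)$ then gives $\mathbb{E}[M_L (M^*_K)^{p-1}] \leq \mathbb{E}[M_L^p]^{1/p} \mathbb{E}[(M^*_K)^p]^{(p-1)/p}$.

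Since $(M^*_K)^p \leq K^p$, the quantity $\mathbb{E}[(M^*_K)^p]$ is finite, so I can safely divide by $\mathbb{E}[(M^*_K)^p]^{(p-1)/p}$ and raise to the $p$-th power to obtain the desired inequality with $M^*_K$ in place of $M^*$. Letting $K \to \infty$ and applying monotone convergence to the left-hand side (the right-hand side is independent of $K$) finishes the proof. The main technical subtlety is precisely this integrability issue: without the truncation, dividing by $\mathbb{E}[(M^*)^p]^{(p-1)/p}$ is not justified a priori. Note that one must truncate the running maximum $M^*$ rather than the submartingale values $M_\ell$ themselves, since capping at a constant is concave and hence need not preserve the submartingale property.
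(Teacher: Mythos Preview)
Your proof is correct and is precisely the classical argument for Doob's $L^p$ maximal inequality. The paper, however, does not prove this lemma at all: it is stated as a standard result attributed to Doob and then invoked as a tool in the proof of Lemma~\ref{lem:expectation}. So there is no ``paper's proof'' to compare against; you have supplied a complete and careful proof where the paper simply cites the literature.
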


\begin{lemma}\label{lem:Rosenthal} (Rosenthal's Inequality - Thm. 3 in \cite{R}) Let $2 < p < \infty$. Then there exists a constant $K_p$ depending only on $p$, so that if $X_1, \ldots, X_\ell$ are independent random variables with $\mathbb{E}[X_i] = 0$ for all $i$ and $\mathbb{E}[|X_i|^p] < \infty$ for all $i$, then:
\[\left(\mathbb{E}[|S_\ell|^p]\right)^{1/p} \leq K_p\; \max \left\{ \left( \sum_{i=1}^{\ell} \mathbb{E}[|X_i|^p]\right)^{1/p}, \left(\sum_{i=1}^{\ell} \mathbb{E}[|X_i|^2]\right)^{1/2}\right\}.\]
\end{lemma}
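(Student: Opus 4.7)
The plan is to prove Rosenthal's inequality by first reducing to a bound on the $p/2$-moment of the square function via the Burkholder--Davis--Gundy inequality, and then closing a recursion for that moment by expanding one factor using independence.

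First, since $\{S_k\}_{k=1}^{\ell}$ is a martingale with independent mean-zero increments and quadratic variation $Z := \sum_{i=1}^\ell X_i^2$, the Burkholder--Davis--Gundy inequality produces a constant $C_p$ depending only on $p$ such that $\mathbb{E}[|S_\ell|^p] \leq C_p \, \mathbb{E}[Z^{p/2}]$. It therefore suffices to bound $M := \mathbb{E}[Z^{p/2}]$ in terms of the two quantities $A := \sum_i \mathbb{E}[X_i^2]$ and $B := \sum_i \mathbb{E}[|X_i|^p]$ that appear on the right-hand side of the statement.

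Second, I would rewrite $M = \sum_i \mathbb{E}[X_i^2 \, Z^{p/2-1}]$ and expand $Z = Z_{-i} + X_i^2$, where $Z_{-i} := Z - X_i^2$ is independent of $X_i$. For $r = p/2 - 1 > 0$ the elementary bound $(a+b)^r \leq c_r(a^r + b^r)$ on $a,b \geq 0$, with $c_r = \max(1, 2^{r-1})$, together with independence gives
\begin{equation*}
\mathbb{E}[X_i^2 \, Z^{p/2-1}] \leq c_r \, \mathbb{E}[X_i^2]\, \mathbb{E}[Z_{-i}^{p/2-1}] + c_r \, \mathbb{E}[|X_i|^p].
\end{equation*}
Summing over $i$ and then applying monotonicity $\mathbb{E}[Z_{-i}^{p/2-1}] \leq \mathbb{E}[Z^{p/2-1}]$ followed by Jensen's inequality $\mathbb{E}[Z^{p/2-1}] \leq M^{1-2/p}$ produces the clean recursion $M \leq c_r(A \, M^{1-2/p} + B)$.

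Finally, a short two-case analysis on whether $M^{2/p}$ exceeds $2 c_r A$ converts this recursion into $M \leq 2 c_r B + (2 c_r A)^{p/2}$, and taking $p$th roots and combining with the BDG constant recovers the inequality claimed in the lemma with a constant $K_p = (2 c_r C_p \max(1, (2c_r)^{p/2-1}))^{1/p}$ that depends only on $p$. The main obstacle here is not conceptual but bookkeeping: one must track the BDG constant $C_p$ and the constant $c_r$ carefully so that the final $K_p$ depends only on $p$, and not on $\ell$ or the specific distribution of the $X_i$. An alternative route via truncation---splitting each $X_i$ at a threshold $t$, controlling the truncated part by Bernstein's inequality and the tail by $t^{-(p-2)}\mathbb{E}[|X_i|^p]$, and then optimizing in $t$---yields the same qualitative bound and may be preferable if one wants explicit constants.
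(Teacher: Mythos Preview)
The paper does not prove this lemma at all: it is quoted verbatim as Theorem~3 of Rosenthal~\cite{R} and used as a black box, so there is no ``paper's own proof'' to compare against. Your argument is a correct and self-contained proof of the inequality. The route you take---BDG to reduce to the square function, then the identity $Z^{p/2}=\sum_i X_i^2 Z^{p/2-1}$ together with independence of $X_i$ and $Z_{-i}$ to obtain the recursion $M\le c_r(A\,M^{1-2/p}+B)$, closed by a two-case dichotomy---is one of the standard modern proofs (it is essentially the argument one finds in, e.g., de la Pe\~na--Gin\'e or Ledoux--Talagrand), and is rather different in flavor from Rosenthal's original 1970 proof, which proceeded via duality and a three-series-type decomposition. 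Your approach has the advantage of being short and of extending naturally to martingale difference sequences, at the cost of importing the BDG constant; Rosenthal's original argument is more elementary in its inputs but longer. Either is perfectly adequate for the use made of the lemma here, which only needs the qualitative statement that $K_p$ depends on $p$ alone.
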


We now prove:

\begin{lemma}\label{lem:expectation} For any interval $I \in \mathcal{F} \cup \mathcal{F}_s$, \[\mathbb{E}[|Y_I|^{1+\delta}] \leq C |I|^{1+\delta},\] where $C$ is a constant depending only on $\delta$ .
\end{lemma}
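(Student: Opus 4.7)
The plan is to reduce $Y_I=\max_{I'\subseteq I}S_{I'}^2$ to the maximum of ordinary partial sums anchored at the left endpoint of $I$, handle this maximum via Doob's inequality applied to the submartingale $(T_k^2)$, and then bound a single $2(1+\delta)$-th moment via Rosenthal's inequality. For the reduction, note that $S_{I'}=\sum_{i\in I'}X_i$ depends only on the integers in $I'$, so I may assume $I'$ has integer endpoints. Writing $T_k:=X_{i_s+1}+\cdots+X_{i_s+k}$ for $0\le k\le |I|$ (with $T_0:=0$), any integer-endpoint subinterval $I'=(i'_s,i'_e]$ satisfies $S_{I'}=T_{i'_e-i_s}-T_{i'_s-i_s}$, hence $|S_{I'}|\le 2\max_{0\le k\le |I|}|T_k|$. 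Consequently,
\[ \mathbb{E}[Y_I^{1+\delta}] \le 4^{1+\delta}\,\mathbb{E}\!\left[\left(\max_{0\le k\le |I|} T_k^2\right)^{\!1+\delta}\right]. \]

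Since $(T_k)$ is a martingale (independence and mean-zero) and $x\mapsto x^2$ is convex, $(T_k^2)$ is a nonnegative submartingale, so Lemma~\ref{lem:Doob1} with $p=1+\delta>1$ gives
\[ \mathbb{E}\!\left[\left(\max_{0\le k\le |I|} T_k^2\right)^{\!1+\delta}\right] \le \left(\tfrac{1+\delta}{\delta}\right)^{\!1+\delta}\mathbb{E}\!\left[|T_{|I|}|^{2(1+\delta)}\right]. \]
Next, since $X_{i_s+1},\ldots,X_{i_s+|I|}$ are i.i.d., mean-zero, and have finite $2(1+\delta)$-th moment, Lemma~\ref{lem:Rosenthal} with $p=2(1+\delta)$ yields
\[ \mathbb{E}\!\left[|T_{|I|}|^{2(1+\delta)}\right]^{1/(2(1+\delta))} \le K_p \max\!\left\{\left(|I|\,\mathbb{E}[|X_1|^{2(1+\delta)}]\right)^{1/(2(1+\delta))}\!,\ |I|^{1/2}\right\}. \]
For $|I|\ge 1$ the second term dominates since $1/2\ge 1/(2(1+\delta))$, so raising to the $2(1+\delta)$-th power gives $\mathbb{E}[|T_{|I|}|^{2(1+\delta)}]\le C_1|I|^{1+\delta}$ with $C_1$ depending only on $\delta$ and on the fixed moment $\mathbb{E}[|X_1|^{2(1+\delta)}]$. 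Chaining the three displays produces $\mathbb{E}[Y_I^{1+\delta}]\le C|I|^{1+\delta}$, as required.

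The argument is a direct application of the three cited tools, so I do not anticipate a serious obstacle. The only mild subtlety is the initial reduction, which is needed because the paper allows $I'$ to have real-valued endpoints, but it is immediate from the fact that $S_{I'}$ sees only integer indices and can therefore be written as a difference of two of the $T_k$.
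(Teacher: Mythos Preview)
Your proof is correct and follows essentially the same approach as the paper: reduce $Y_I$ to the one-sided maximum of anchored partial sums via the difference trick, apply Doob's $L^p$ inequality to pass to a single endpoint moment, and then invoke Rosenthal to get the $|I|^{1+\delta}$ bound. The only cosmetic difference is that the paper applies Doob to the submartingale $|S_{I,k}|$ with exponent $2(1+\delta)$ whereas you apply it to $T_k^2$ with exponent $1+\delta$; these are equivalent and yield the same conclusion up to the value of the $\delta$-dependent constant.
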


\begin{proof} For each $I = (i_s, \ldots, i_e]$, we define the notation $S_{I,k} := X_{i_s+1} + \cdots X_{i_s + k}$ to denote the partial sum of the first $k$ variables in the interval $I$ (for values of $k$ from 1 to $|I|$).
We also define the random variable $\widetilde{Y}_I$ as:
\[\widetilde{Y}_I := \max_{1 \leq k \leq |I|} S_{I,k}^2.\]

We observe that $\mathbb{E}[|Y_I|^{1+\delta}] \leq 4^{1+\delta} \mathbb{E}[|\widetilde{Y}_I|^{1+\delta}]$. To see this, consider an arbitrary interval $I' = (i'_s, \ldots, i'_e] \subseteq I = (i_s, \ldots, i_e]$, where $i'_s \neq i_s$. We let $k_1$ denote the number of integers contained in $(i_s, i'_e]$ and we let $k_2$ denote the number of integers contained in $(i_s, i'_s]$. Then $S_{I,k_1} = S_{I'} + S_{I,k_2}$, so
\[|S_{I'}| \leq 2 \max \left\{ |S_{I,k_1}|, |S_{I,k_2}|\right\}.\]
This implies $\mathbb{E}[|Y_I|^{1+\delta}] \leq 4^{1+\delta} \mathbb{E}[|\widetilde{Y}_I|^{1+\delta}]$. (In fact, it implies the stronger fact that $Y_I \leq 4\widetilde{Y}_I$ always holds, and we will use this again later.)

Now, $\{S_{I,k}\}$ is a martingale, so $\{|S_{I,k}|\}$ is a submartingale (by Jensen's inequality). Thus, by Lemma \ref{lem:Doob1}:
\begin{equation}\label{fromdoob}
\mathbb{E}[|\widetilde{Y}_I|^{1+\delta}] = \mathbb{E}\left[ \left(\max_{1 \leq k \leq |I|} |S_{I,k}|\right)^{2(1+\delta)}\right] \leq C' \mathbb{E}[|S_I|^{2(1+\delta)}].
\end{equation}
for some constant $C'$ depending only on $\delta$.

Applying Lemma \ref{lem:Rosenthal}, we see that:
\begin{equation}\label{fromRosenthal}
\mathbb{E}[|S_I|^{2(1+\delta)}] \leq K'|I|^{1+\delta},
\end{equation}
for some constant $K'$ depending only on $\delta$.
Combining (\ref{fromRosenthal}) with (\ref{fromdoob}) (and recalling that $Y_I \leq 4\widetilde{Y}_I$), we have shown:
\[\mathbb{E}[|Y_I|^{1+\delta}] \leq C |I|^{1+\delta}\]
for some constant $C$ depending only on $\delta$.
\end{proof}

Our next goal is to derive a suitable upper bound on the quantity $\mathbb{E}[Y_I \mathbb{I}_{Bad, I}]$ for every interval $I$. To do this, we will need one more standard lemma.

\begin{lemma}\label{lem:berry-esseen} (Berry-Esseen Theorem\footnote{This can be found in \cite{CT}, p.322, for example.}) Let $X_1, X_2, \ldots$ be independent random variables with $\mathbb{E}[X_i] = 0$, $\mathbb{E}[X_i^2] = 1$, and $\mathbb{E}[|X_i|^{2+\gamma}] \leq M$ for all $i$, and some $\gamma \in (0,1]$. Then there exists a universal constant $C_\gamma$ such that for all positive integers $k$:
\[\sup_{-\infty < x < \infty} \left| \mathbb{P}\left[\frac{S_k}{\sqrt{k}} < x\right] - \frac{1}{\sqrt{2\pi}} \int_{-\infty}^x e^{-y^2/2} dy \right| \leq C_\gamma \left( \frac{M}{k^{\gamma/2}}\right).\]
\end{lemma}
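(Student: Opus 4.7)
The plan is to prove this standard Berry--Esseen estimate via the classical characteristic-function method of Esseen, with one small twist to accommodate the fractional moment exponent $\gamma \in (0,1]$ rather than the more familiar $\gamma = 1$ case. Throughout, write $\phi(t) = \mathbb{E}[e^{itX_1}]$ and $f_k(t) = \phi(t/\sqrt{k})^k$, so that $f_k$ is the characteristic function of $S_k/\sqrt{k}$; let $\Phi$ denote the standard normal CDF. We may assume $k \ge M^{2/\gamma}$, since otherwise $C_\gamma M/k^{\gamma/2} \ge C_\gamma$ and the bound is trivial (the supremum is at most $1$).

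The first step is to invoke Esseen's smoothing inequality: for any $T > 0$,
\[
\sup_x \left|\mathbb{P}[S_k/\sqrt{k} \le x] - \Phi(x)\right|
\;\le\; \frac{1}{\pi}\int_{-T}^{T}\left|\frac{f_k(t) - e^{-t^2/2}}{t}\right|\,dt
\;+\; \frac{c}{T}.
\]
This reduces the problem to controlling $f_k(t) - e^{-t^2/2}$ on a frequency window of size $T$, to be optimized at the end.

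The second step is a quantitative Taylor expansion of $\phi$ at the origin. Combining the two standard remainder bounds $|e^{iu} - 1 - iu + u^2/2| \le |u|^2$ and $|e^{iu} - 1 - iu + u^2/2| \le |u|^3/6$ by a geometric-mean interpolation gives the fractional-order estimate $|e^{iu} - 1 - iu + u^2/2| \le c_\gamma |u|^{2+\gamma}$ for all $\gamma \in [0,1]$. Integrating against $X_1$ and using $\mathbb{E}[X_1] = 0$, $\mathbb{E}[X_1^2] = 1$, $\mathbb{E}[|X_1|^{2+\gamma}] \le M$ yields $\phi(t) = 1 - t^2/2 + r(t)$ with $|r(t)| \le C_\gamma M |t|^{2+\gamma}$. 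On the window $|t| \le T := c_\gamma k^{\gamma/2}/M$ (which, under our assumption $k \ge M^{2/\gamma}$, is a subset of the range $|t| \lesssim k^{1/2}/M^{1/\gamma}$ where the Taylor expansion dominates), one has $|\phi(t/\sqrt{k})| \le e^{-t^2/(4k)}$, and the elementary inequality $|z^k - w^k| \le k \max(|z|,|w|)^{k-1}|z - w|$ applied with $w = e^{-t^2/(2k)}$ gives
\[
|f_k(t) - e^{-t^2/2}| \;\le\; C_\gamma\, M\, k^{-\gamma/2}\, |t|^{2+\gamma}\, e^{-t^2/4}.
\]

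Finally, inserting this bound into the smoothing inequality and taking $T = c_\gamma k^{\gamma/2}/M$: the Gaussian factor makes the resulting integrand $|t|^{1+\gamma}e^{-t^2/4}$ integrable over all of $\mathbb{R}$, so the integral contributes $O_\gamma(M/k^{\gamma/2})$, while the tail term $c/T$ is of the same order, yielding the claim. The only real obstacle is the fractional-moment bookkeeping in the second step: unlike the classical $\gamma = 1$ case, where a single third-order Taylor remainder suffices, here one must interpolate between the order-2 and order-3 remainders and track that all implicit constants depend only on $\gamma$ and degrade in a controlled (at worst polynomial-in-$1/\gamma$) fashion as $\gamma \downarrow 0$ --- which is precisely why the hypothesis requires $\gamma > 0$.
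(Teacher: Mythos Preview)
Your proof is correct and follows the classical Esseen characteristic-function method. However, the paper does not actually prove this lemma: it is stated as a standard result with a citation to Chow--Teicher, p.~322, and is used as a black box in the analysis of the bad intervals. So there is nothing to compare against; your write-up simply supplies the textbook argument that the paper takes for granted, including the interpolation $|e^{iu}-1-iu+u^2/2| \le c_\gamma|u|^{2+\gamma}$ needed to handle the fractional moment exponent $\gamma \in (0,1]$.
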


To upper bound $\mathbb{E}[Y_I \mathbb{I}_{Bad, I}]$, we begin by applying H\"{o}lder's Inequality with $p := 1+\delta$ and $q$ defined so that $\frac{1}{p} + \frac{1}{q} = 1$. This gives us:
\[\mathbb{E}[Y_I \mathbb{I}_{Bad,I}] = \mathbb{E}[|Y_I \mathbb{I}_{Bad,I}|] \leq \left(\mathbb{E}[|Y_I|^{1+\delta}]\right)^{1/(1+\delta)} \left(\mathbb{E}[|\mathbb{I}_{Bad,I}|^q]\right)^{1/q}\]
Applying Lemma \ref{lem:expectation}, we see that
\[\left(\mathbb{E}[|Y_I|^{1+\delta}]\right)^{1/(1+\delta)} \leq C^{1/(1+\delta)} |I|.\]
Since $\mathbb{I}_{Bad,I}$ only takes values in $\{0,1\}$, we also have
\[\mathbb{E}[|\mathbb{I}_{Bad,I}|^q] = \mathbb{E}[\mathbb{I}_{Bad,I}] = \mathbb{P}[\mathbb{I}_{Bad,I} = 1].\]

We now consider
\[\mathbb{P}\left[\mathbb{I}_{Bad,I} = 1\right] = \mathbb{P}\left[Y_I > \frac{B}{8} |I| \ln \ln N\right].\]
We recall the definition of $\widetilde{Y}_I$ and the fact that $Y_I \leq 4 \widetilde{Y}_I$ from the proof of Lemma \ref{lem:expectation}. We then have:
\[\mathbb{P}\left[Y_I > \frac{B}{8} |I| \ln \ln N\right] \leq \mathbb{P}\left[\widetilde{Y}_I > \frac{B}{32} |I| \ln \ln N\right] = \mathbb{P}\left[ \max_{1 \leq k \leq |I|} |S_{I,k}| > \sqrt{\frac{B}{32}} |I|^{1/2} (\ln \ln N)^{1/2}\right].\]

By Lemma \ref{lem:etemadi}, this quantity is
\[ \leq 3 \max_{1 \leq k \leq |I|} \mathbb{P}\left[ |S_{I,k}| \geq \frac{\sqrt{B}}{12 \sqrt{2}}|I|^{1/2} (\ln \ln N)^{1/2}\right] .\]
We will bound this probability using Chebyshev's inequality for values of $k$ which are $< |I|^{1/2}$, and using Lemma \ref{lem:berry-esseen} for larger values of $k$.

For $k < |I|^{1/2}$, we apply Chebyshev's inequality to obtain:
\[ \mathbb{P}\left[ |S_{I,k}| \geq \frac{\sqrt{B}}{12\sqrt{2}}|I|^{1/2} (\ln \ln N)^{1/2}\right] \leq \frac{288 \mathbb{E}[|S_{I,k}|^2]}{B|I|\ln\ln N}.\]
We note that $\mathbb{E}[|S_{I,k}|^2] = \mathbb{E}[S_{I,k}^2] = k$ (recall that $S_{I,k}$ is a sum of $k$ independent random variables, each with mean 0 and variance 1. Since $k < |I|^{1/2}$, this gives us:
\[\mathbb{P}\left[ |S_{I,k}| \geq \frac{\sqrt{B}}{12\sqrt{2}}|I|^{1/2} (\ln \ln N)^{1/2}\right] \leq \frac{288}{B|I|^{1/2} \ln \ln N}.\]

For $|I|^{1/2} \leq k \leq |I|$, we apply Lemma \ref{lem:berry-esseen} to obtain:
\[ \mathbb{P}\left[|S_{I,k}| \geq \frac{\sqrt{B}}{12\sqrt{2}}|I|^{1/2} (\ln \ln N)^{1/2}\right] \leq \mathbb{P}\left[\frac{|S_{I,k}|}{\sqrt{k}} > \frac{\sqrt{B \ln \ln N}}{12\sqrt{2}}\right]\]
\[ \leq \frac{1}{\sqrt{2\pi}} \int_{\frac{\sqrt{B\ln \ln N}}{12\sqrt{2}}}^\infty e^{-y^2/2} dy + \frac{D}{k^{\delta}}\]
for some constant $D$ depending on $\delta$ (we are applying the lemma with $\gamma = 2\delta$, and $\mathbb{E}[|X_i|^{2+2\delta}]$ is a constant).

To bound the integral, we proceed as follows (assuming that $N$ is large enough so that $\frac{\sqrt{B\ln \ln N}}{12} \geq 1$):
\[\int_{\frac{\sqrt{B\ln \ln N}}{12\sqrt{2}}}^\infty e^{-y^2/2} dy \leq \int_{\frac{\sqrt{B \ln \ln N}}{12\sqrt{2}}}^\infty y e^{-y^2/2} dy = \left.-e^{-y^2/2}\right]_{\frac{\sqrt{B\ln\ln N}}{12\sqrt{2}}}^\infty = \left(\frac{1}{\ln N} \right)^{\frac{B}{576}}.\]
Thus, for $|I|^{1/2} \leq k \leq |I|$, we have shown:
\[ \mathbb{P}\left[|S_{I,k}| \geq \frac{\sqrt{B}}{12\sqrt{2}}|I|^{1/2} (\ln \ln N)^{1/2}\right] \leq \frac{1}{\sqrt{2\pi}} \left(\frac{1}{\ln N} \right)^{\frac{B}{576}} + \frac{D}{|I|^{\delta/2}}.\]

We define $\sigma = \min \{1/2, \delta/2\}$. Then, for some constant $D'$ depending on $\delta$ and for all $k$, we have:
\[\mathbb{P}\left[|S_{I,k}| \geq \frac{\sqrt{B}}{12}|I|^{1/2} (\ln \ln N)^{1/2}\right] \leq \frac{1}{\sqrt{2\pi}} \left(\frac{1}{\ln N} \right)^{\frac{B}{576}} + \frac{D'}{|I|^{\sigma}}.\]
Thus,
\[\mathbb{P}[\mathbb{I}_{Bad,I} = 1] \leq \frac{3}{\sqrt{2\pi}} \left(\frac{1}{\ln N} \right)^{\frac{B}{288}} + \frac{3D'}{|I|^{\sigma}}.\]

Putting everything together, we have that
\begin{equation}\label{expectedbad}
\mathbb{E}[Y_I \mathbb{I}_{Bad,I}] \leq C^{1/(1+\delta)} |I|\left(\frac{3}{\sqrt{2\pi}} \left(\frac{1}{\ln N} \right)^{\frac{B}{576}} + \frac{3D'}{|I|^{\sigma}}\right)^{1- \frac{1}{1+\delta}}.
\end{equation}

Next, we show that the contribution of intervals $I$ satisfying $|I| \geq (\log(N))^d$ to the quantity (\ref{badcontribution}) is not too large, where we define the parameter $d:= \frac{2}{\sigma \left(1 - \frac{1}{1+\delta}\right)}$. For this, we will use (\ref{expectedbad}) and Kronecker's lemma.

\begin{lemma}\label{lem:kronecker} (Kronecker's Lemma) Let $a_1, a_2, \ldots$ be a sequence of real numbers such that $a_1 \leq a_2 \leq a_3 \ldots$ and $a_j \rightarrow \infty$ as $j\rightarrow \infty$. Then if $x_1, x_2, \ldots$ is a sequence of real numbers such that $\sum_{j=1} x_j /a_j$ converges,
\[a_k^{-1} \sum_{j=1}^k x_j \rightarrow 0.\]
\end{lemma}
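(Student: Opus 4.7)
The plan is to use Abel summation (summation by parts) to convert the hypothesis about convergence of the weighted series $\sum x_j/a_j$ into information about the unweighted partial sums $\sum x_j$, exploiting the monotonic growth of $a_k$ to get the division by $a_k$ for free.

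First, I would introduce $b_0 := 0$ and $b_k := \sum_{j=1}^k x_j/a_j$, which by hypothesis converges to some limit $b \in \mathbb{R}$ as $k \to \infty$. Writing $x_j = a_j(b_j - b_{j-1})$ and applying summation by parts, I would obtain
\[
\sum_{j=1}^k x_j \;=\; \sum_{j=1}^k a_j (b_j - b_{j-1}) \;=\; a_k b_k \;-\; \sum_{j=1}^{k-1} b_j (a_{j+1} - a_j).
\]
Dividing by $a_k$, the claim reduces to showing that
\[
\frac{1}{a_k} \sum_{j=1}^{k-1} b_j (a_{j+1} - a_j) \;\longrightarrow\; b \qquad \text{as } k \to \infty,
\]
since then the right-hand side becomes $b_k - b + o(1) \to 0$.

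To prove this convergence, I would fix $\epsilon > 0$ and use the convergence $b_j \to b$ to choose $J$ large enough so that $|b_j - b| < \epsilon$ for all $j \geq J$. I would then split the sum at index $J$. The initial portion $\sum_{j=1}^{J-1} b_j (a_{j+1} - a_j)$ is a fixed bounded quantity, so dividing by $a_k \to \infty$ makes it vanish. For the tail, I would write $b_j = b + (b_j - b)$ and use the telescoping identity $\sum_{j=J}^{k-1}(a_{j+1}-a_j) = a_k - a_J$, obtaining
\[
\frac{1}{a_k} \sum_{j=J}^{k-1} b_j (a_{j+1} - a_j) \;=\; b \cdot \frac{a_k - a_J}{a_k} \;+\; \frac{1}{a_k}\sum_{j=J}^{k-1}(b_j - b)(a_{j+1}-a_j),
\]
where the last term is bounded in absolute value by $\epsilon \cdot (a_k - a_J)/a_k \leq \epsilon$ (using monotonicity $a_{j+1} \geq a_j$ to keep the differences nonnegative). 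Letting $k \to \infty$ first and then $\epsilon \to 0$ gives the desired limit $b$.

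The argument is essentially routine once the Abel summation is set up, so there is no genuine analytic obstacle; the only thing to be careful about is the bookkeeping of boundary terms in the summation by parts and ensuring the monotonicity of $\{a_j\}$ is used correctly to control signs when pulling out $\epsilon$.
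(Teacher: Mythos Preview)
The paper does not prove this lemma; it is stated as a classical result (Kronecker's Lemma) and used as a black box. Your argument via Abel summation is the standard textbook proof and is correct: the only minor point to watch is that the bound $(a_k - a_J)/a_k \leq 1$ tacitly assumes $a_J \geq 0$, which need not hold in general, but since $a_k \to \infty$ one has $(a_k - a_J)/a_k \to 1$ regardless, so the tail term is still $O(\epsilon)$ and the conclusion is unaffected.
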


We now prove:
\begin{lemma}\label{lem:bigintervals} Let $m$ denote a positive integer. Then:
\[\frac{1}{2^m \ln\ln (2^m)} \sum_{\substack{I \in \mathcal{F} \cup \mathcal{F}_s \\ |I| \geq (\log(2^m))^d}} Y_I \mathbb{I}_{Bad,I} \rightarrow 0 \text{  a.s. as } m \rightarrow \infty,\]
where the indicator variable $\mathbb{I}_{Bad,I}$ and $\mathcal{F}\cup \mathcal{F}_s$ are defined with respect to $N = 2^m$.
\end{lemma}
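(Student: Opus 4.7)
The plan is to bound $\mathbb{E}[S_m]$, where $S_m$ denotes the sum in the lemma statement and $a_m := 2^m \ln\ln 2^m$, and then invoke Markov's inequality together with the first Borel--Cantelli lemma to deduce $S_m/a_m \to 0$ almost surely. (Equivalently, one may apply Tonelli to turn $\sum_m \mathbb{E}[S_m]/a_m < \infty$ into $\sum_m S_m/a_m < \infty$ almost surely and then use non-negativity of the summands, very much in the spirit of Lemma \ref{lem:kronecker}.) The entire argument reduces to verifying that $\sum_m \mathbb{E}[S_m]/a_m$ converges, which will simultaneously fix how large the parameter $B$ needs to be.

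I would organize the expectation bound by dyadic level. At level $j$ the families $\mathcal{F}$ and $\mathcal{F}_s$ together contain at most $2^{m-j+1}$ intervals of length $2^j$, and the restriction $|I| \geq (\log 2^m)^d$ means $j \geq d \log_2 m$. Applying the bound (\ref{expectedbad}) to each such $I$ and using the subadditivity $(a+b)^p \leq a^p + b^p$ for $p = \delta/(1+\delta) < 1$ to split the bracket into two summands, the factor $|I| = 2^j$ out front cancels against the interval count at that level. Summing over $j$ then produces
\[ \mathbb{E}[S_m] \leq C \cdot 2^m \Bigl( m (\ln 2^m)^{-\beta} + m^{-d \sigma \delta/(1+\delta)} \Bigr), \]
where $\beta := B\delta/(576(1+\delta))$ and the second term comes from a geometric series in $j$ dominated by its leading value at $j = \lceil d \log_2 m \rceil$.

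The choice $d = 2(1+\delta)/(\sigma \delta)$ is made precisely so that $d\sigma\delta/(1+\delta) = 2$, which makes the second contribution $O(2^m/m^2)$ and hence summable against $a_m^{-1}$. The main obstacle, and the reason $B$ has not yet been pinned down, is the first contribution: divided by $a_m$ it behaves like $m^{1-\beta}/\ln\ln 2^m$, which is summable in $m$ only when $\beta > 2$, i.e., $B > 1152(1+\delta)/\delta$. Fixing $B$ above this threshold, $\sum_m \mathbb{E}[S_m]/a_m$ converges; Markov's inequality then yields $\sum_m \mathbb{P}[S_m > \varepsilon a_m] < \infty$ for every $\varepsilon > 0$, and Borel--Cantelli delivers $S_m/a_m \to 0$ almost surely, as required.
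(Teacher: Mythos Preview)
Your argument is correct and lands on the same core computation as the paper: both reduce to showing that the double sum $\sum_n \frac{1}{\ln n}\sum_{j\ge d\log n}\bigl(n^{-B\delta/(576(1+\delta))}+2^{-j\sigma\delta/(1+\delta)}\bigr)$ converges, and both extract from this the same threshold $B>1152(1+\delta)/\delta$ and the same role for $d=2(1+\delta)/(\sigma\delta)$.

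The organizational difference is worth noting. The paper first reindexes: for each interval $I$ it fixes the \emph{minimal} $n$ with $I\subseteq[2^n]$, replaces $\mathbb{I}_{Bad,I}$ by the $N$-independent majorant $\mathbb{I}_{Bad,I,n}$, and then applies Kronecker's Lemma to the single resulting series $\sum_i a_i^{-1}Y_{I_i}\mathbb{I}_{Bad,I_i,n}$. This way each interval is counted exactly once. Your route instead bounds $\mathbb{E}[S_m]$ for every $m$ and sums; an interval of size $2^j$ therefore contributes to many values of $m$, but the geometric decay in $j$ and the power decay in $m$ absorb this over-counting, so $\sum_m \mathbb{E}[S_m]/a_m$ still converges. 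Your Tonelli/Borel--Cantelli finish is then more elementary than the Kronecker step. Either packaging yields the lemma with the same constraint on $B$.
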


\begin{proof} For each interval $I \in \mathcal{F} \cup \mathcal{F}_s$, there is a minimal value of $n$ such that $I \subseteq [2^n]$.
We order our sum over the $I$'s according to their associated values of $n$ (and otherwise arbitrarily): i.e. we first sum terms for $I$'s with $n=2$, then with $n=3$, and so on, and we only include those $I$'s satisfying $|I| \geq (\log(2^n))^d$. (We will ignore the very small number of terms with $n=1$ for convenience.)
We let $I_1, I_2, I_3, \ldots $ denote the resulting ordered sequence of all intervals $I$ which are contained in $\mathcal{F} \cup \mathcal{F}_s$ for some value of $N$ (which is a power of 2) and also satisfy $|I| \geq (\log(2^n))^d$. For each such $I_i$, we define
\[a_i := 2^n \ln \ln (2^n),\]
where $n$ is defined from $I_i$ as above. Since we consider $N$ going to infinity, we get an infinite sequence of $I$'s. For a fixed $N$, we let $I_{N_e}$ denote the final interval $I \subseteq [N]$ appearing in the infinite sequence.

We also define a new indicator variable, $\mathbb{I}_{Bad,I,n}$, which indicates the event that an interval $I$ is ``bad" with respect to the value $N := 2^n$ (where $n$ is defined from $I$ as above). Note that when $\mathbb{I}_{Bad,I}$ is the indicator for $I$ being ``bad" with respect to a larger $N$, then $\mathbb{I}_{Bad,I} = 1$ implies that $\mathbb{I}_{Bad,I,n} = 1$ as well. We then have (for any $N$ which is a power of 2):
\[\frac{1}{N\ln \ln N} \sum_{\substack{I \in \mathcal{F} \cup \mathcal{F}_s \\ |I| \geq (\log(N))^d}} Y_I \mathbb{I}_{Bad,I} \leq
\frac{1}{a_{N_e}} \sum_{i=1}^{N_e} Y_{I_i} \mathbb{I}_{Bad,I_i,n}.\]
It is a bit easier to work with the variables $Y_I \mathbb{I}_{Bad,I,n}$ than the original variables $Y_I \mathbb{I}_{Bad,I}$, since the latter \emph{depend} on the value of $N$, and the former do not. Hence we can now think of $N$ going to infinity just in terms of adding more random variables to our sum, instead of needing to change the definition of all the random variables with each change of $N$.

Thus, it suffices for us to prove that:
\[\lim_{M \rightarrow \infty} \frac{1}{a_M} \sum_{i=1}^{M} Y_{I_i} \mathbb{I}_{Bad,I_i,n} = 0 \text{ a.s.}.\]
By Lemma \ref{lem:kronecker} (note that $a_M \rightarrow \infty$ as $M \rightarrow \infty$), this follows if the sum
\[\sum_{i=1}^{\infty} \frac{1}{a_i} Y_{I_i} \mathbb{I}_{Bad,I_i,n}\]
converges almost surely. This in turn follows if:
\begin{equation}\label{finite}
\sum_{i=1}^{\infty} \frac{1}{a_i} \mathbb{E}[|Y_{I_i} \mathbb{I}_{Bad,I_i,n}|] < \infty.
\end{equation}

For a fixed $n$, we will have contributions from intervals of size $2^j$ for values of $j$ ranging from $d\log n$ to $n$. Note that since we only consider values of $n\geq 2$, we will have $j\geq 1$. By (\ref{expectedbad}), the sum of the expectations $\mathbb{E}[|Y_{I} \mathbb{I}_{Bad,I,n}|]$ for intervals $|I| = 2^j$ with the value of $n$ is at most:
\[C' \cdot 2^n \cdot  \left( \left(\frac{1}{n}\right)^{B/288}+ \frac{D''}{2^{\sigma j}}\right)^{1 - \frac{1}{1+\delta}}, \]
where $D''$ and $C'$ are constants depending only on $\delta$. We then see that (\ref{finite}) is dominated by:
\begin{equation}\label{finite2}
C' \sum_{n=2}^{\infty} \frac{1}{\ln n} \sum_{j=d \log n}^n  \left( \left(\frac{1}{n}\right)^{B/288}+ \frac{D''}{2^{\sigma j}}\right)^{1 - \frac{1}{1+\delta}}.
\end{equation}

We now note that for any positive real values $x, y, \gamma$, we have \[(x+y)^{\gamma} \leq (2 \max \{x, y\})^\gamma = 2^{\gamma} \max \{x^{\gamma}, y^{\gamma}\} \leq 2^{\gamma}(x^\gamma+ y^\gamma).\] Applying this to (\ref{finite2}), we see it is
\begin{equation}\label{finite3}
\leq C'' \sum_{n=2}^{\infty} \frac{1}{\ln n} \sum_{j= d\log n}^{n}\left( \left(\frac{1}{n}\right)^{\frac{B}{576} \left(1 - \frac{1}{1+\delta}\right)} + \frac{D'''}{2^{\sigma' j}}\right),
\end{equation}
where $C'', D'''$ and $\sigma'$ are constants depending on $\delta$. More specifically, $\sigma' = \sigma \left( 1- \frac{1}{1+\delta}\right)$.

We split this into two pieces, and first consider the sum:
\[D''' \sum_{n =2}^{\infty} \frac{1}{\ln n} \sum_{j= d\log n}^{n} \frac{1}{2^{\sigma' j}}.\]
We note that
\[\sum_{j= d\log n}^{\infty} \frac{1}{2^{\sigma' j}} \leq K_{\sigma'} \frac{1}{n^{d\sigma'}}\]
for some constant $K_{\sigma'}$ depending on $\sigma'$. Therefore,
\[\sum_{n =2}^{\infty} \frac{1}{\ln n} \sum_{j= d\log n}^{n} \frac{1}{2^{\sigma' j}} \leq K_{\sigma'} \sum_{n=2}^{\infty} \frac{1}{ n^{d\sigma'} \ln n}.\]
Since $d = \frac{2}{\sigma'}$, this sum converges.

Next we consider the sum
\[\sum_{n=2}^{\infty} \frac{1}{\ln n}\cdot  n^{-\frac{B}{576}\left(1 - \frac{1}{1+\delta}\right)} \sum_{j = d\log n}^{n} 1.\]
Since $\sum_{j=d\log n}^n 1 \leq n$, it suffices to consider
\[\sum_{n=2}^{\infty} \frac{1}{\ln n} \cdot n^{1-\frac{B}{576}\left(1 - \frac{1}{1+\delta}\right)}.\]

At this point, we choose the value of $B$ so that
\[\frac{B}{576}\left(1 - \frac{1}{1+\delta}\right)-1 >1.\]
This ensures that the sum converges, and the proof of the lemma is complete.
\end{proof}

To conclude our treatment of the bad intervals, we must also show that the contribution of intervals $I$ with $|I| < (\log(N))^d$ is not too large.
To do this, we return to considering a fixed value of $N$ (which is a power of 2) and the indicator variables $\mathbb{I}_{Bad,I}$ are all with respect to this $N$. For each $I \in \mathcal{F}\cup \mathcal{F}_s$, we define the random variable $Z_I := Y_I\mathbb{I}_{Bad,I}$. We first consider intervals $I \in \mathcal{F}$ of a fixed size $2^i < (\log(N))^d$. There are $L:= N \cdot 2^{-i}$ such intervals, and we denote the associated random variables $Z_I$ as $Z_1, \ldots, Z_L$. We prove:

\begin{lemma}\label{lem:errorprob}
\[\mathbb{P}\left[ \left| \sum_{j=1}^L Z_j - \sum_{j=1}^L \mathbb{E}[Z_j]\right| \geq L \right] \leq K (2^i)^{1+2\delta} N^{-\delta},\]
where $K$ is a constant depending only on $\delta$ and not on $i$ or $N$.
\end{lemma}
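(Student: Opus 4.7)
The plan is to exploit that $Z_1,\ldots,Z_L$ are independent and identically distributed---the intervals $I_1,\ldots,I_L$ are the $L=N/2^i$ pairwise disjoint blocks of $\mathcal{F}$ at level $i$, so each $Z_j$ depends only on a disjoint block of the $X_i$'s---together with the $(1+\delta)$-th moment bound $\mathbb{E}[|Z_j|^{1+\delta}]\leq C(2^i)^{1+\delta}$ supplied by Lemma~\ref{lem:expectation}. The target bound $(2^i)^{1+2\delta}N^{-\delta}$ is exactly what one expects from combining this moment bound with a Markov-type deviation estimate at exponent $1+\delta$: $L^{-\delta}=(2^i/N)^{\delta}$ naturally contributes the factor $(2^i)^{\delta}N^{-\delta}$, and the per-summand moment bound contributes the remaining $(2^i)^{1+\delta}$.

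My first move would be Markov's inequality at exponent $1+\delta$,
\[\mathbb{P}\left[\left|\sum_{j=1}^{L}(Z_j-\mathbb{E}[Z_j])\right|\geq L\right] \leq L^{-(1+\delta)}\,\mathbb{E}\left[\left|\sum_{j=1}^{L}(Z_j-\mathbb{E}[Z_j])\right|^{1+\delta}\right],\]
followed by a von Bahr--Esseen style moment inequality $\mathbb{E}|\sum_j W_j|^{p}\leq 2\sum_j \mathbb{E}|W_j|^{p}$ for independent mean-zero $W_j$ and $p\in[1,2]$, applied with $W_j=Z_j-\mathbb{E}[Z_j]$ and $p=1+\delta$. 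Combined with the elementary bound $\mathbb{E}|W_j|^{1+\delta}\leq 2^{1+\delta}\mathbb{E}|Z_j|^{1+\delta}$, this immediately yields a bound of order $L\cdot(2^i)^{1+\delta}/L^{1+\delta}=(2^i)^{1+2\delta}N^{-\delta}$. If one prefers to stay within the more elementary tools already used by the paper, one can instead truncate at a level $M$, write $Z_j=A_j+B_j$ with $A_j:=Z_j\mathbb{I}_{Z_j\leq M}$ and $B_j:=Z_j\mathbb{I}_{Z_j>M}$, and handle the two pieces separately: Chebyshev applied to $\sum(A_j-\mathbb{E}[A_j])$ using $\mathbb{E}[A_j^{2}]\leq M^{1-\delta}\mathbb{E}[|Z_j|^{1+\delta}]$ gives a contribution of order $M^{1-\delta}(2^i)^{1+\delta}/L$, while Markov on the tail gives $\mathbb{P}[Z_j>M]\leq M^{-(1+\delta)}\mathbb{E}[|Z_j|^{1+\delta}]$ and $\mathbb{E}[B_j]\leq M^{-\delta}\mathbb{E}[|Z_j|^{1+\delta}]$; provided $M$ is chosen so that $\sum\mathbb{E}[B_j]\leq L/4$, a deviation $|\sum(B_j-\mathbb{E}[B_j])|\geq L/2$ forces some $Z_j>M$, contributing at most $LC(2^i)^{1+\delta}/M^{1+\delta}$. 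Choosing $M=L=N/2^i$ balances the two contributions and recovers the same final bound.

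The main (minor) obstacle is verifying that $\sum\mathbb{E}[B_j]$ really is small compared to $L$ when $M=L$, since otherwise a deviation of size $L/2$ need not force any individual $Z_j$ past $M$; this is precisely where the hypothesis $2^i<(\log N)^d$ enters, as it makes $2^i$ polynomially smaller than $N^{\delta/(1+2\delta)}$ once $N$ is large, so the required exponent inequality goes through, with the finitely many small-$N$ cases absorbed into $K$. The broader conceptual point is that we have only a $(1+\delta)$-th moment of $Z_j$ available---well short of a variance estimate in general---so neither Chebyshev nor Rosenthal applies directly to the summed variables, and either the $L^{1+\delta}$-triangle inequality of von Bahr--Esseen or the truncation device above is needed to bridge the gap.
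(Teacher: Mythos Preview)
Your proposal is correct, and your second (truncation) approach is essentially the paper's own proof: the paper truncates each $Z_j$ at the level $M=L$, bounds the tail contribution by $L\,\mathbb{P}[Z_j>L]\leq L^{-\delta}\mathbb{E}[|Z_j|^{1+\delta}]$, and handles the truncated part by Chebyshev after bounding the variance via $\mathbb{E}[\tilde Z_j^2]\lesssim L^{1-\delta}\mathbb{E}[|Z_j|^{1+\delta}]$ (obtained by integrating the tail bound from the $(1+\delta)$-th moment). The paper likewise invokes the hypothesis $2^i<(\log N)^d$ exactly where you do, to guarantee that the truncation shift $\sum_j(\mathbb{E}[Z_j]-\mathbb{E}[\overline Z_j])$ is $\leq L/2$ so that the effective threshold $L'$ stays comparable to $L$.

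Your first approach via von Bahr--Esseen is a genuinely different and cleaner route that the paper does not take. It replaces the truncation-plus-Chebyshev machinery by a single moment inequality for sums of independent mean-zero variables at exponent $1+\delta\in(1,2]$, yielding the bound $K(2^i)^{1+2\delta}N^{-\delta}$ in one line. A pleasant bonus is that this route makes no use of the size restriction $2^i<(\log N)^d$; the paper's argument needs that restriction only to keep the truncation bias under control, whereas von Bahr--Esseen sidesteps truncation entirely. The cost is importing an inequality not already in the paper's toolkit; the benefit is a shorter proof with a slightly stronger conclusion.
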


\begin{proof} We define $\overline{Z}_j := Z_j \mathbb{I}_{|Z_j|\leq L}$. In other words, $\overline{Z}_j$ is the truncation of the (non-negative) random variable $Z_j$ at the value $L$. We can then write:
\[\mathbb{P}\left[ \left| \sum_{j=1}^L Z_j - \sum_{j=1}^L \mathbb{E}[Z_j]\right| \geq L \right] =
\mathbb{P}\left[\left|\sum_{j=1}^L \left(\overline{Z}_j - \mathbb{E}[\overline{Z}_j]\right) + \sum_{j=1}^L \left(Z_j - \overline{Z_j} - \mathbb{E}[Z_j] + \mathbb{E}[\overline{Z}_j]\right) \right| \geq L\right].\]

We consider
\[\mathbb{E}[Z_j] -\mathbb{E}[\overline{Z}_j] = \int_{L}^{\infty} \mathbb{P}[Z_j>t] dt.\]
By Chebyshev's inequality,
\[\mathbb{P}[Z_j > t] \leq \frac{\mathbb{E}[|Z_j|^{1+\delta}]}{t^{1+\delta}} .\]
Inserting this into the integral, we obtain:
\[\left|\mathbb{E}[Z_j] - \mathbb{E}[\overline{Z}_j]\right| \leq \mathbb{E}[|Z_j|^{1+\delta}] \int_{L}^\infty t^{-1-\delta} dt = \frac{\mathbb{E}[|Z_j|^{1+\delta}]}{\delta L^{\delta}}.\]
We now see that:
\[\mathbb{P}\left[\left|\sum_{j=1}^L \left(\overline{Z}_j - \mathbb{E}[\overline{Z}_j]\right) + \sum_{j=1}^L \left(Z_j - \overline{Z_j} - \mathbb{E}[Z_j] + \mathbb{E}[\overline{Z}_j]\right) \right| \geq L\right]\]\[ \leq L \mathbb{P}[|Z_j|> L] + \mathbb{P}\left[ \left| \sum_{j=1}^L (\overline{Z}_j - \mathbb{E}[\overline{Z}_j])\right| \geq L - \frac{\mathbb{E}[|Z_j|^{1+\delta}]}{\delta L^{\delta}}\right].\]
Here, we have applied the union bound, the fact that the $Z_j$'s are identically distributed, and that $Z_j - \overline{Z}_j =0$ when $|Z_j| \leq L$.

We will bound these two quantities separately. First, by applying Chebyshev's inequality, we have:
\begin{equation}\label{simplebound}
L \mathbb{P}[|Z_j|> L] \leq L^{-\delta} \mathbb{E}[|Z_j|^{1+\delta}].
\end{equation}
To bound the second quantity, we let $L':= L - \frac{\mathbb{E}[|Z_j|^{1+\delta}]}{\delta L^{\delta}}$ and we let $\tilde{Z}_j := \overline{Z}_j -\mathbb{E}[\overline{Z}_j]$. By another application of Chebyshev's inequality,
\[\mathbb{P}\left[ \left| \sum_{j=1}^L \tilde{Z}_j \right| \geq L'\right] \leq \frac{\mathbb{E}\left[ \left(\sum_{j=1}^L \tilde{Z}_j\right)^2\right]}{(L')^2}.\]
Since the random variables $\tilde{Z}_j$ are independent and mean zero,
\[\mathbb{E}\left[ \left(\sum_{j=1}^L \tilde{Z}_j\right)^2\right] = L\mathbb{E}[\tilde{Z}_j^2].\]
We then observe:
\begin{equation}\label{int}
\mathbb{E}[\tilde{Z}_j^2] = 2\int_{0}^\infty x \mathbb{P}[|\tilde{Z}_j|>x] dx \leq 2+ 2\int_1^\infty x \mathbb{P}[|\tilde{Z}_j|>x] dx.
\end{equation}
We recall that $\tilde{Z}_j := \overline{Z}_j - \mathbb{E}[\overline{Z_j}]$. Since $\overline{Z}_j$ is a non-negative random variable which is $\leq L$, we have that $|\tilde{Z}_j| \leq L$ as well. Therefore, (\ref{int}) becomes:
\[ 2+ 2\int_1^L x \mathbb{P}[|\tilde{Z}_j|> x] dx \leq 2 + 2\int_1^L x \cdot \frac{\mathbb{E}[|\tilde{Z}_j|^{1+\delta}]}{x^{1+\delta}} =
2+ 2\mathbb{E}[|\tilde{Z}_j|^{1+\delta}] \int_1^L x^{-\delta} dx\]
\[ = 2+ \frac{2}{1-\delta}\mathbb{E}[|\tilde{Z}_j|^{1+\delta}] \left(L^{1-\delta}-1\right).\]

To put this all together and simplify our expressions, we recall that $L := N \cdot 2^{-i}$, where $2^i < (\log N)^d$. Thus, $L > N(\log N)^{-d}$. This is very large compared to the value of $\mathbb{E}[|Z_j|^{1+\delta}]$, which is $\leq \mathbb{E}[|Y_I|^{1+\delta}]$ for the associated interval $I$. Recall from Lemma \ref{lem:expectation} that $\mathbb{E}[|Y_I|^{1+\delta}]$ is at  most $C|I|^{1+\delta}$, and $|I| = 2^i < (\log(N))^d$. This means that for sufficiently large $N$, we can loosely bound $L'$ as:
\[L' = L - \frac{\mathbb{E}[|Z_j|^{1+\delta}]}{\delta L^{\delta}} > \frac{L}{2}.\]
We then have:
\[\mathbb{P}\left[ \left| \sum_{j=1}^L \tilde{Z}_j \right| \geq L'\right] \leq \frac{\mathbb{E}\left[ \left(\sum_{j=1}^L \tilde{Z}_j\right)^2\right]}{(L')^2} \leq \frac{4\mathbb{E}\left[\left(\sum_{j=1}^L \tilde{Z}_j\right)^2\right]}{L^2} \leq K' \mathbb{E}[|\tilde{Z}_j|^{1+\delta}]L^{-\delta},\]
for some constant $K'$ depending on $\delta$.
Combining this with (\ref{simplebound}), we have
\[\mathbb{P}\left[ \left| \sum_{j=1}^L Z_j - \sum_{j=1}^L \mathbb{E}[Z_j]\right| \geq L \right] \leq K'' \left( \max \left\{\mathbb{E}[|\tilde{Z}_j|^{1+\delta}], \mathbb{E}[|Z_j|^{1+\delta}] \right\}\right)L^{-\delta},\]
where $K'':= K'+1$.

Now we consider the quantity $\mathbb{E}[|\tilde{Z}_j|^{1+\delta}]$. We note that:
\[\mathbb{E}[|\tilde{Z}_j|^{1+\delta}] = \mathbb{E}\left[|\overline{Z}_j - \mathbb{E}[\overline{Z}_j]|^{1+\delta}\right].\] Since $\overline{Z}_j$ is a non-negative random variable, $|\overline{Z}_j - \mathbb{E}[\overline{Z}_j]| \leq \max \{\overline{Z}_j, \mathbb{E}[\overline{Z}_j]\}$. Then, \\ $\left(\max \{\overline{Z}_j, \mathbb{E}[\overline{Z}_j]\}\right)^{1+\delta} \leq \overline{Z}_j^{1+\delta} + \left(\mathbb{E}[\overline{Z}_j]\right)^{1+\delta}$. Since $g(x) = x^{1+\delta}$ is a convex function on $[0, \infty)$, Jensen's inequality implies that $\left(\mathbb{E}[\overline{Z}_j]\right)^{1+\delta} \leq \mathbb{E}[\overline{Z}_j^{1+\delta}]$. Therefore,
\[\mathbb{E}[|\tilde{Z}_j|^{1+\delta}] \leq 2\mathbb{E}[\overline{Z}_j^{1+\delta}] \leq 2\mathbb{E}[Z_j^{1+\delta}].\]

Since $Z_j \leq Y_I$ for the associated interval $I$, we have:
\[\mathbb{P}\left[ \left| \sum_{j=1}^L Z_j - \sum_{j=1}^L \mathbb{E}[Z_j]\right| \geq L \right] \leq 2K'' \mathbb{E}[|Y_I|^{1+\delta}]L^{-\delta}
\leq K |I|^{1+\delta} L^{-\delta},\]
for some constant $K$ depending on $\delta$, by Lemma \ref{lem:expectation}. Since $L := N/|I|$, we can rewrite this as $K |I|^{1+2\delta} N^{-\delta}$. Recalling that $|I| = 2^i$, we have:
\[\mathbb{P}\left[ \left| \sum_{j=1}^L Z_j - \sum_{j=1}^L \mathbb{E}[Z_j]\right| \geq L \right] \leq K (2^i)^{1+2\delta}N^{-\delta}.\]
\end{proof}

Now, we fix $N$ and consider summing these error probabilities in Lemma \ref{lem:errorprob} for all $i$ such that $2^i < (\log(N))^d$. We also fix a value $\delta'$ such that $0 < \delta' < \delta$. Since the number of such $i$ and all of the terms except $N^{-\delta}$ are polylogarithmic in $N$, we get that, for all $N$ sufficiently large with respect to $\delta$ (and $\delta, \delta'$):

\begin{equation}\label{lowlevels}
\sum_{i=0}^{d \log \log N} \mathbb{P}\left[ \left|\sum_{\substack{I \in \mathcal{F}\\ |I| = 2^i}} Y_I\mathbb{I}_{Bad,I} - \sum_{\substack{I \in \mathcal{F}\\ |I| = 2^i}}\mathbb{E}[Y_I \mathbb{I}_{Bad,I}] \right| \geq N\cdot 2^{-i} \right] \leq N^{-\delta'}.
\end{equation}

We will refer to the levels of $\mathcal{F}$ and $\mathcal{F}_s$ with $2^i < (\log N)^d$ as the ``low" levels. The left hand side of (\ref{lowlevels}) is an upper bound on the probability of the contribution of \emph{any} low level of $\mathcal{F}$ to the quantity (\ref{badcontribution}) exceeding its expectation by more that $N\cdot 2^{-i}$. The very same argument can be applied to the low levels of $\mathcal{F}_s$. Since we are considering only values of $N$ which are powers of 2, and
\[\sum_{n=1}^\infty 2^{-n\delta'} < \infty,\]
we can apply the Borel-Cantelli Lemma to conclude that almost surely, only finitely many values of $N$ will have a low level which contributes more than $N\cdot 2^{-i}$ plus its expected contribution.

When the contributions of all the low levels obey this bound, we have:
\begin{equation}\label{lowlevels2}
\sum_{\substack{I \in \mathcal{F} \cup \mathcal{F}_s\\ |I| < (\log N)^d}} Y_{I} \mathbb{I}_{Bad,I} \leq \sum_{i=0}^{d \log \log N} N \cdot 2^{-i} +
\sum_{\substack{I \in \mathcal{F}\cup \mathcal{F}_s \\ |I| < (\log N)^d}} \mathbb{E}[Y_I \mathbb{I}_{Bad,I}].
\end{equation}
We observe:
\[\sum_{i=0}^{d \log \log N} N \cdot 2^{-i} < N\sum_{i=0}^\infty 2^{-i} = 2N.\]

We will bound the second quantity using (\ref{expectedbad}). Recalling that $B$ was fixed so that $\frac{B}{576} \left(1-\frac{1}{1+\delta}\right) > 2$, (\ref{expectedbad}) implies:
\[\mathbb{E}[Y_I \mathbb{I}_{Bad,I}] \leq C''|I| \left(\frac{1}{\ln^2 N} + \frac{D'''}{|I|^{\sigma'}}\right),\]
where $C'', D''', d, \sigma'$ are constants depending on $\delta, \delta$. We note that this is merely a restatement of (\ref{finite3}). Thus, we have:
\begin{equation}\label{lowlevels3}
\sum_{\substack{I \in \mathcal{F}\cup \mathcal{F}_s \\ |I| < (\log N)^d}} \mathbb{E}[Y_I \mathbb{I}_{Bad,I}] \leq
2C''N \sum_{i=0}^{d\log \log N} \frac{1}{\ln^2 N} + \frac{D'''}{2^{i\sigma'}}.
\end{equation}
Next, we observe that
\[2C'' N \sum_{i=0}^{d \log \log N} \frac{1}{\ln^2 N} \leq C''' \frac{N (\ln \ln N)}{\ln^2 N},\]
for some constant $C'''$ depending on $\delta$.
Finally, we note that
\[ \sum_{i=0}^\infty \frac{1}{2^{i\sigma'}} < \infty.\]

Putting these results together with Lemma \ref{lem:bigintervals}, we have proven that quantity (\ref{badcontribution}) divided by $N\ln \ln N$ goes to zero as $N$ goes to infinity, for $N$'s which are powers of 2. To achieve a result for all values of $N$, we consider an $N$ which is an arbitrary positive integer, and let $N'$ the smallest power of 2 such that $N \leq N'$. Then $N' <2N$, and $N'\ln \ln N' < 3N\ln \ln N$ for instance (for all but very small $N$). Next, we claim that the contribution of the bad intervals to $\left|\left| \{X_i\}^N \right| \right|^2_{V^2}$ is bounded by quantity (\ref{badcontribution}) for $N'$. To see this, note that any bad interval in the maximal partition of $[N]$ will fall in an interval $I \in \mathcal{F} \cup \mathcal{F}_s$ for $N'$ (of size less than 4 times the size of the bad interval), and this $I$ must have $\mathbb{I}_{Bad,I}=1$ (recall that we defined these indicator variables to detect ``badness" with respect to $N'/2 < N$ for any subintervals of sufficient size). Thus, we have proven:

\begin{theorem}\label{thm:badintervals}
For a positive integer $N$, we let $\mathcal{B}_N$ denote the contribution of intervals $I$ such that $S_I^2 > B |I| \ln \ln N$ to the quantity $\left|\left| \{X_i\}^N \right| \right|^2_{V^2}$. Then:
\[\frac{\mathcal{B}_N}{N \ln \ln N} \rightarrow 0 \text{ a.s. as } N \rightarrow \infty.\]
\end{theorem}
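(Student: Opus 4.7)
My plan is to prove Theorem \ref{thm:badintervals} by reducing to the dyadic covering families $\mathcal{F} \cup \mathcal{F}_s$ via the inequality (\ref{badcontribution}), then splitting the bad-interval contribution at the scale $|I| \approx (\log N)^d$ into a ``high level'' part and a ``low level'' part, each of which is shown to be $o(N \ln \ln N)$ almost surely. Throughout, $B$ is fixed large enough that $\frac{B}{576}(1-\frac{1}{1+\delta}) > 2$ (so that both pieces of (\ref{expectedbad}) give sufficient decay), and I first work with $N = 2^n$, then pass to general $N$.

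For the high levels $|I| \geq (\log N)^d$, Lemma \ref{lem:bigintervals} is exactly what is needed: its conclusion is that the contribution of such intervals, divided by $2^m \ln \ln 2^m$, tends to zero almost surely as $m \to \infty$, so this piece is already done along powers of two.

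For the low levels $|I| = 2^i < (\log N)^d$, the approach is to show concentration around the expected contribution at each level and then control the expected contribution directly. Concentration is provided by Lemma \ref{lem:errorprob}, which gives error probability $K (2^i)^{1+2\delta} N^{-\delta}$ for any single level deviating from its mean by more than $L = N \cdot 2^{-i}$. Summing over the $O(\log \log N)$ low levels of both $\mathcal{F}$ and $\mathcal{F}_s$ yields the polylogarithmic-times-$N^{-\delta}$ bound (\ref{lowlevels}), whose values at $N = 2^n$ are summable; Borel--Cantelli then ensures that almost surely, only finitely many $n$ fail the bound (\ref{lowlevels2}). Once we are in the good event, the total deviation contribution across low levels is at most $\sum_{i=0}^{d \log \log N} N \cdot 2^{-i} \leq 2N$, which is $o(N \ln \ln N)$. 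For the expected contribution itself, I use (\ref{expectedbad}) together with the choice of $B$, which gives $\mathbb{E}[Y_I \mathbb{I}_{Bad,I}] \lesssim |I|\bigl(\frac{1}{\ln^2 N} + \frac{D'''}{|I|^{\sigma'}}\bigr)$; summing over $I$ of fixed size $2^i$ and then over $i \leq d \log \log N$ gives $O(N \ln \ln N / \ln^2 N) + O(N) = o(N \ln \ln N)$, as in (\ref{lowlevels3}).

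Finally, I extend from $N = 2^n$ to arbitrary $N$ by the sandwich argument: let $N'$ be the smallest power of two with $N \leq N'$, so $N' < 2N$ and $N' \ln \ln N' \leq 3 N \ln \ln N$. Any bad subinterval $I' \subseteq [N]$ in the optimal partition for $\left\|\{X_i\}^N\right\|_{V^2}^2$ is contained, by Lemma \ref{lem:intervals} applied with $N'$, in some $I \in \mathcal{F} \cup \mathcal{F}_s$ (for $N'$) of size less than $4|I'|$; the definition of $\mathbb{I}_{Bad,I}$ was set up with $N'/2$ in mind precisely so that any such $I'$ which is bad relative to $N$ (hence relative to $N' / 2$ as well, by the factor-$2$ slack in $\ln \ln$) forces $\mathbb{I}_{Bad,I} = 1$. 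Hence $\mathcal{B}_N$ is bounded by quantity (\ref{badcontribution}) for $N'$, which we have just shown is $o(N' \ln \ln N') = o(N \ln \ln N)$ almost surely.

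The main obstacle, conceptually, is that the two terms in (\ref{expectedbad}) force a genuine split: the Gaussian tail $(\ln N)^{-B/576}$ is strong in $N$ but uniform in $|I|$, while the Berry--Esseen error $|I|^{-\sigma}$ is only useful for reasonably large $|I|$. This is what dictates both the threshold $(\log N)^d$ and the choice of $B$, and keeping these balanced carefully is the delicate part; once the split is made as above, the rest is a routine combination of Lemma \ref{lem:bigintervals}, Lemma \ref{lem:errorprob}, and Borel--Cantelli.
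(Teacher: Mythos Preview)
Your proposal is correct and follows essentially the same approach as the paper: the reduction to $\mathcal{F}\cup\mathcal{F}_s$ via (\ref{badcontribution}), the split at scale $(\log N)^d$ handled by Lemma~\ref{lem:bigintervals} for high levels and by Lemma~\ref{lem:errorprob} plus Borel--Cantelli and (\ref{lowlevels})--(\ref{lowlevels3}) for low levels, and the final sandwich passage from $N=2^n$ to general $N$ are exactly the steps the paper uses, with the same choice of $B$.
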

This concludes our treatment of the bad intervals.

\subsection{The Medium Intervals}\label{sec:medium}
We first reduce to the case of bounded $X_i$'s (i.e. $|X_i| \leq M$ for some constant $M$). We consider the truncation of an unbounded $X_i$, denoted by $\overline{X}_i := X_i \mathbb{I}_{|X_i|\leq M}$. We define $Z_i := X_i - \overline{X}_i$. We then have:
\[\left|\left| \{X_i\}^N \right| \right|^2_{V^2} = \left|\left| \{(\overline{X_i}-\mathbb{E}[\overline{X}_i]) + (Z_i - \mathbb{E}[Z_i])\}^N \right|\right|^2_{V^2}.\]
By the triangle inequality for the $\ell^2$ norm, this is:
$\leq \left|\left|\{\overline{X}_i - \mathbb{E}[\overline{X}_i]\}^N \right| \right|_{V^2} + \left|\left| \{Z_i - \mathbb{E}[Z_i]\}^N\right|\right|_{V^2}.$

Now, $\frac{Z_i- \mathbb{E}[Z_i]}{\sqrt{Var[Z_i]}}$ are identically distributed random variables with expectation equal to 0, variance equal to 1, and finite $2(1+\delta)$ moment. We can therefore apply Theorem \ref{thm:badintervals} to conclude:
\[\limsup_{N \rightarrow \infty} \frac{\left|\left| \{Z_i-\mathbb{E}[Z_i]\}^N \right| \right|^2_{V^2}}{N \ln \ln N} \leq BVar[Z_i] \emph{ a.s.}\]
We recall that $B$ is a constant depending only on $\delta$. Since $\mathbb{E}[X_i^2] < \infty$, we can choose $M$ sufficiently large so that $B Var[Z_i]$ is much smaller than $\epsilon$, say $< \epsilon/2$ (and $Var[\overline{X}_i]$ is very close to 1). It then suffices to bound the contribution of the medium intervals for the variables
$\frac{\overline{X}_i - \mathbb{E}[\overline{X}_i]}{\sqrt{Var[\overline{X}_i]}}$ as $\leq 2(1+\epsilon/2) N \ln \ln N$. These variables are bounded, identically distributed, mean zero, variance 1, and have finite $2(1+\delta)$ moment.

To control the contribution of the medium intervals for bounded variables, we will first choose parameters $\epsilon_0, \epsilon_1 >0$ such that $\epsilon_0 < \epsilon$, $\epsilon_1^2 > \epsilon_0$, and:
\begin{equation}\label{epsilons}
(1+\epsilon)(1+\epsilon_0)^{-1}(1-\epsilon_1)^2 > 1.
\end{equation}
The reason for this constraint will become clear later. We also define $\epsilon'$ by $(1+\epsilon')^2 = 1+\epsilon_0$. We let $M$ denote the bound on the absolute values of the $X_i$'s. For a fixed positive integer $N$, we let $N' = (1+\epsilon')^n$ denote the real number which is the smallest integral power of $1+\epsilon'$ that is $\geq N$. We now define a family of real intervals which we will denote by $\mathcal{H}$. This will be similar to $\mathcal{F}\cup \mathcal{F}_s$, the family of intervals we considered in our analysis of the bad intervals, but now our interval lengths will be powers of $1+\epsilon'$ instead of powers of two.

$\mathcal{H}$ will be a union of several families of intervals, denoted $\mathcal{H}_0, \ldots, \mathcal{H}_h$, where $h$ is a function of $\epsilon'$. $\mathcal{H}_0$ consists of all intervals of the form:
\[ ((c-1)(1+\epsilon')^i, c(1+\epsilon')^i], \; i \in \{0, 1, \ldots, n\}, \; c \in \mathbb{Z} \cap [1, (1+\epsilon')^{n-i}].\]
For $j \neq 0$, $\mathcal{H}_j$ consists of all intervals of the form:
\[((c-1)(1+\epsilon')^i+j\epsilon'(1+\epsilon')^{i-1}, c(1+\epsilon')^i + j\epsilon'(1+\epsilon')^{i-1}], i \in \{1, \ldots, n-1\}, c \in \mathbb{Z} \cap [1, (1+\epsilon')^{n-i} -1].\] We will refer to intervals of size $(1+\epsilon)^i$ as belonging to level $i$ in each $\mathcal{H}_j$.
We set $h$ (the maximum value of $j$) to be $\frac{1+\epsilon'}{\epsilon'}-1$. Essentially, we are taking the intervals in $\mathcal{H}_0$ and shifting them by multiples of $\epsilon'(1+\epsilon')^{i-1}$ and stopping when we would reach the next interval. We now prove:

\begin{lemma}\label{lem:intervals2} Let $I' \subseteq [N]$ denote an arbitrary interval. There exists some interval $I \in \mathcal{H}$ such that $I' \subseteq I$ and $|I| < (1+\epsilon')^2|I'|$.
\end{lemma}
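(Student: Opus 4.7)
The plan is to mimic the argument of Lemma \ref{lem:intervals}, exploiting the fact that $\mathcal{H}$ is designed so that its shifted subfamilies fit together densely. The key structural observation is that the choice $h+1=(1+\epsilon')/\epsilon'$ ensures that at each level $i\ge 1$, the left endpoints of the length-$(1+\epsilon')^i$ intervals drawn from $\mathcal{H}_0\cup\cdots\cup\mathcal{H}_h$ form exactly the arithmetic progression of step size $\epsilon'(1+\epsilon')^{i-1}$: each shifted family $\mathcal{H}_j$ occupies one of the $h$ positions between consecutive intervals of $\mathcal{H}_0$.

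Given $I'=(a,b]\subseteq[N]$ with $|I'|\ge 1$, I would first handle the ``large'' case: if $|I'|>(1+\epsilon')^{n-2}$, take $I:=(0,(1+\epsilon')^n]=(0,N']\in\mathcal{H}_0$, which contains $[N]\supseteq I'$ and satisfies $|I|/|I'|<(1+\epsilon')^n/(1+\epsilon')^{n-2}=(1+\epsilon')^2$. Otherwise, choose $i$ to be the smallest positive integer with $(1+\epsilon')^{i-1}\ge|I'|$, so that $(1+\epsilon')^{i-2}<|I'|\le(1+\epsilon')^{i-1}$ and $i\le n-1$. Then let $k_0$ be the largest non-negative integer with $k_0\epsilon'(1+\epsilon')^{i-1}\le a$, and set
\[I:=\bigl(k_0\epsilon'(1+\epsilon')^{i-1},\;k_0\epsilon'(1+\epsilon')^{i-1}+(1+\epsilon')^i\bigr],\]
which by the structural observation belongs to $\mathcal{H}_{k_0\bmod(h+1)}$ at level $i$.

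To verify $I'\subseteq I$, note that maximality of $k_0$ gives $k_0\epsilon'(1+\epsilon')^{i-1}>a-\epsilon'(1+\epsilon')^{i-1}$, so the right endpoint of $I$ exceeds $a+(1+\epsilon')^i-\epsilon'(1+\epsilon')^{i-1}=a+(1+\epsilon')^{i-1}\ge a+|I'|=b$. The size estimate $|I|/|I'|<(1+\epsilon')^i/(1+\epsilon')^{i-2}=(1+\epsilon')^2$ is then immediate.

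The main obstacle to discharge cleanly is verifying that $I$ is genuinely a member of $\mathcal{H}$, i.e., that the associated index $c=\lfloor k_0/(h+1)\rfloor+1$ falls within the allowed range. Using $a<N\le N'$ together with the identity $N'=(h+1)(1+\epsilon')^{n-i}\cdot\epsilon'(1+\epsilon')^{i-1}$, one gets $k_0<(h+1)(1+\epsilon')^{n-i}$ and hence $c\le(1+\epsilon')^{n-i}$, which is in range for $\mathcal{H}_0$. The slightly stricter bound $c\le(1+\epsilon')^{n-i}-1$ required when $k_0\bmod(h+1)\ne 0$ may fail in the extremal case $c=(1+\epsilon')^{n-i}$, which we handle by replacing $I$ with the adjacent $\mathcal{H}_0$-interval at level $i$ (always in range when $i\le n-1$ and still covering $I'$ by the covering argument above applied to the next multiple of $(1+\epsilon')^i$). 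A final technical wrinkle is that $(1+\epsilon')/\epsilon'$ need not literally be an integer; interpreting $h$ as $\lfloor(1+\epsilon')/\epsilon'\rfloor-1$ enlarges the effective shift quantum by at most a factor $1+O(\epsilon')$, readily absorbed by slightly shrinking $\epsilon_0$ at the outset.
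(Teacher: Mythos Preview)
Your proposal is correct and follows essentially the same approach as the paper: choose the level so that $|I'|$ lies between two consecutive powers of $1+\epsilon'$, then use the dense arithmetic progression of left endpoints at the next level to find a covering interval of length at most $(1+\epsilon')^2|I'|$. Your treatment is in fact more careful than the paper's, which does not explicitly check the range constraint on $c$ for the shifted families $\mathcal{H}_j$ or address the integrality of $(1+\epsilon')/\epsilon'$; your edge-case patch (falling back to the terminal $\mathcal{H}_0$ interval at level $i$ when $c$ hits the top of the range) is sound.
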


\begin{proof} We proceed similarly to the proof of Lemma \ref{lem:intervals}. We define the integer $i$ by the inequality $(1+\epsilon')^{i-1} < |I'| \leq (1+\epsilon')^i$. If $i \geq n-1$, then $I = (0, (1+\epsilon')^n]$ suffices. Otherwise, we consider intervals in $\mathcal{H}$ of size $(1+\epsilon')^{i+1}$. There exist integers $c,k$ such that the leftmost endpoint of $I$ is $> c(1+\epsilon')^{i+1} + k\epsilon'(1+\epsilon')^i$ and $\leq c(1+\epsilon')^{i+1} + (k+1)\epsilon'(1+\epsilon')^i$ and $c \in [0, (1+\epsilon')^{n-i-1}-1]$, $k < \frac{1+\epsilon'}{\epsilon}$. Since $|I'| \leq (1+ \epsilon')^i$, this implies that:
\[I'\subseteq (c(1+\epsilon')^{i+1} + k\epsilon'(1+\epsilon')^i, c(1+\epsilon')^{i+1} + (k+1)\epsilon'(1+\epsilon')^i+(1+\epsilon')^i].\]
We can rewrite the containing interval as:
\[I := (c(1+\epsilon')^{i+1} + k\epsilon'(1+\epsilon')^i, (c+1)(1+\epsilon')^{i+1} + k\epsilon'(1+\epsilon')^i] \in \mathcal{H}_k.\]
This $I$ satisfies $|I| = (1+\epsilon')^{i+1} < (1+\epsilon')^2 |I'|$, since $|I'| > (1+\epsilon')^{i-1}$.
\end{proof}

We let $N'' := (1+\epsilon')^{n-1}$. We will refer to an interval $I \in \mathcal{H}$ as ``medium" if \[\max_{\substack{I' \subseteq I\\ |I'| > (1+\epsilon_0)^{-1}|I|}} S_{I'}^2 > 2(1+\epsilon)(1+\epsilon_0)^{-1} |I| \ln \ln N''.\]
We note that $I \in \mathcal{H}$ can contain a subinterval $I'$ of size $|I'| > (1+\epsilon')^{-2}|I|$ that is medium (in the sense of (\ref{def:medium})) with respect to some $N'' < N \leq N'$ \emph{only} when $I$ satisfies this new condition for being ``medium." We let $\mathbb{I}_{I,Med}$ be the indicator variable for the event that $I \in \mathcal{H}$ is ``medium". Now, for any $N$ between $N''$ and $N'$, the total length of the medium intervals (in the sense of (\ref{def:medium})) in the maximal partition for $N$ is $\leq \sum_{I \in \mathcal{H}} |I|\cdot \mathbb{I}_{I,Med}$. We will upper bound this quantity. We begin by upper bounding $\mathbb{P}[\mathbb{I}_{I,Med} = 1]$, for which we employ the following lemma.

\begin{lemma}\label{lem:max} Let $\{X_{i}\}$ be a sequence of independent mean 0 random variables such that $|X_i| \leq M \; \forall i$. Then
\[\mathbb{P}\left[ \max_{1\leq l \leq L} \left| \sum_{i=1}^{l} X_{i} \right| > t \right] \leq C e^{\frac{-t^2/2}{\sum_{i=1}^L\mathbb{E}X_i^2 + Mt/3}}\]
for some absolute constant $C$.
\end{lemma}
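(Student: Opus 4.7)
This is a maximal version of Bernstein's inequality for bounded independent summands, and the plan is to derive it from the classical exponential moment method upgraded to a maximum via Doob's submartingale inequality, avoiding the constant loss one would incur by routing through Lemma \ref{lem:etemadi}. By a union bound over $\{X_i\}$ and $\{-X_i\}$, it suffices to prove the one-sided statement
\[\mathbb{P}\!\left[\max_{1\leq \ell\leq L} \sum_{i=1}^{\ell} X_i > t\right] \leq \exp\!\left(\frac{-t^2/2}{V + Mt/3}\right),\]
where $V := \sum_{i=1}^{L} \mathbb{E}X_i^2$, so the absolute constant $C=2$ will do.

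For any $\lambda > 0$, Jensen's inequality applied to $\mathbb{E}[e^{\lambda X_i}] \geq e^{\lambda \mathbb{E} X_i} = 1$ shows that $(e^{\lambda S_\ell})_{\ell=1}^L$ is a nonnegative submartingale with respect to the natural filtration. Doob's maximal inequality (the $p=1$ Markov-type form) then yields
\[\mathbb{P}\!\left[\max_{1\leq \ell\leq L} S_\ell > t\right] = \mathbb{P}\!\left[\max_{1\leq \ell\leq L} e^{\lambda S_\ell} > e^{\lambda t}\right] \leq e^{-\lambda t} \mathbb{E}\!\left[e^{\lambda S_L}\right].\]
Next I bound the right-hand MGF using the hypothesis $|X_i| \leq M$. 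For $k \geq 2$ one has $|X_i|^k \leq M^{k-2}X_i^2$, so term-by-term
\[\mathbb{E}[e^{\lambda X_i}] \leq 1 + \lambda^2 \mathbb{E}X_i^2 \cdot \frac{e^{\lambda M}-1-\lambda M}{(\lambda M)^2} \leq \exp\!\left(\frac{\lambda^2 \mathbb{E}X_i^2}{2(1-\lambda M/3)}\right)\]
for any $\lambda$ with $\lambda M < 3$, where the second step uses the classical elementary estimate $e^x - 1 - x \leq \tfrac{x^2}{2(1-x/3)}$ on $[0,3)$ together with $1+u \leq e^u$. By independence the MGF of $S_L$ is bounded by the same expression with $\mathbb{E}X_i^2$ replaced by $V$.

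Substituting back, the exponent becomes $-\lambda t + \frac{\lambda^2 V}{2(1-\lambda M/3)}$, which I optimize by the canonical Bernstein choice $\lambda = t/(V+Mt/3)$ (easily checked to satisfy $\lambda M < 3$). A short calculation — writing $W := V+Mt/3$ so that $\lambda = t/W$ and $1 - \lambda M/3 = V/W$ — collapses this to the target value $-t^2/(2W)$, giving exactly the claimed bound. The main (and essentially only) work here is the exponential moment bound and the scalar optimization; the maximum is upgraded for free by the submartingale property, so I do not anticipate any genuine obstacle beyond choosing the right $\lambda$ and verifying the admissible range $\lambda M < 3$, which is automatic because $\lambda M = Mt/(V+Mt/3) < 3$.
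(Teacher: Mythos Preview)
Your proposal is correct and follows essentially the same route as the paper's proof: reduce to the one-sided tail, apply Doob's submartingale inequality to $e^{\lambda S_\ell}$, bound the moment generating function via the Bernstein-type estimate $e^{x}-1-x \le \tfrac{x^2}{2(1-x/3)}$ on $[0,3)$ (the paper phrases this as $g(y)\le (1-y/3)^{-1}$ for the function $g(y)=2(e^y-1-y)/y^2$), and then optimize with $\lambda = t/(V+Mt/3)$; the final constant $C=2$ is obtained in both arguments by symmetrizing over $\{X_i\}$ and $\{-X_i\}$.
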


While this statement is certainly not new (all of the essential ideas are contained in Hoeffding's paper \cite{H63}), we have been unable to locate a reference for this precise formulation, so we have included a proof in Appendix \ref{app:doob}. The key ingredient in obtaining this maximal form is Doob's maximal inequality for martingales.

\begin{corollary}\label{cor:max} Let $\{X_i\}$ be a sequence of independent mean 0 random variables with $Var[X_i]=1$ and $|X_i| \leq M$ for all $i$. Then
\[\mathbb{P}\left[ \max_{\substack{I' \subseteq [L] \\ |I'| > (1+\epsilon_0)^{-1}L}} \left| S_{I'} \right| > t \right] \leq C' e^{\frac{-t^2(1-\epsilon_1)^2/2}{L + Mt\epsilon_1/3\epsilon_0}}\]
for some absolute constant $C'$.
\end{corollary}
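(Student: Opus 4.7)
The plan is to reduce Corollary \ref{cor:max} to Lemma \ref{lem:max} by a straightforward endpoint splitting argument, exploiting the assumption $\epsilon_1^2 > \epsilon_0$ to absorb the loss from the union bound.

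Write any subinterval $I' \subseteq [L]$ of length exceeding $(1+\epsilon_0)^{-1}L$ as $I' = (i_s, i_e]$, so that $S_{I'} = S_{i_e} - S_{i_s}$, and observe that the length constraint forces $i_s \leq L\epsilon_0/(1+\epsilon_0)$ and $i_e \leq L$. By the triangle inequality, the event $|S_{I'}| > t$ implies either $|S_{i_e}| > (1-\epsilon_1)t$ or $|S_{i_s}| > \epsilon_1 t$. Taking the union over all such $I'$ and applying the elementary union bound, the probability we wish to bound is at most
\[
\mathbb{P}\!\left[\max_{0 \le \ell \le L}|S_\ell| > (1-\epsilon_1)t\right] + \mathbb{P}\!\left[\max_{0 \le \ell \le \lfloor L\epsilon_0/(1+\epsilon_0)\rfloor}|S_\ell| > \epsilon_1 t\right].
\]

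Next I would apply Lemma \ref{lem:max} to each term. For the first term, $\sum \mathbb{E}[X_i^2] = L$ and the threshold is $(1-\epsilon_1)t$, yielding a bound of the form $C\exp(-(1-\epsilon_1)^2 t^2/2 \bigm/ (L + M(1-\epsilon_1)t/3))$. For the second term, $\sum \mathbb{E}[X_i^2] \le L\epsilon_0/(1+\epsilon_0)$ and the threshold is $\epsilon_1 t$, yielding $C\exp(-\epsilon_1^2 t^2/2 \bigm/ (L\epsilon_0/(1+\epsilon_0) + M\epsilon_1 t/3))$.

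The final step is to verify that each of these two exponential bounds is no larger than the target $\exp(-t^2(1-\epsilon_1)^2/2 \bigm/ (L + Mt\epsilon_1/3\epsilon_0))$, so that their sum is at most $2C$ times the target. For the first bound, since $(1-\epsilon_1) \le 1$ and $(1-\epsilon_1)\epsilon_0 \le \epsilon_1$ (which follows from $\epsilon_0 < \epsilon_1^2 < \epsilon_1$), the numerator matches and the denominator is no larger than the target's. For the second, cross-multiplying reduces the required inequality to the two conditions $\epsilon_1^2 \geq (1-\epsilon_1)^2\epsilon_0/(1+\epsilon_0)$ and $\epsilon_1^2 \geq (1-\epsilon_1)^2\epsilon_0$, both of which follow from the hypothesis $\epsilon_1^2 > \epsilon_0$. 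Setting $C' = 2C$ completes the argument.

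The only delicate point, which is essentially where the hypothesis $\epsilon_1^2 > \epsilon_0$ earns its keep, is the bookkeeping to check that the contribution from the ``small endpoint'' piece (whose variance scale is $L\epsilon_0/(1+\epsilon_0)$ rather than $L$) decays at least as fast as the ``full'' piece—without this relation between $\epsilon_0$ and $\epsilon_1$, the second union-bound term would dominate and ruin the claimed exponent.
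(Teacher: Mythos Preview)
Your proposal is correct and follows essentially the same approach as the paper: both split $S_{I'}=S_{i_e}-S_{i_s}$, apply a union bound with thresholds $(1-\epsilon_1)t$ and $\epsilon_1 t$ for the two endpoints, invoke Lemma~\ref{lem:max} on each piece, and then use $\epsilon_1^2>\epsilon_0$ to show both exponential terms are dominated by the claimed bound. Your write-up is in fact slightly more careful than the paper's (you use the sharp constraint $i_s\le L\epsilon_0/(1+\epsilon_0)$ rather than the looser $i_s<\epsilon_0 L$, and you spell out the cross-multiplication check explicitly).
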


\begin{proof} We apply the union bound to conclude:
\[\mathbb{P}\left[ \max_{\substack{I' \subseteq [L] \\ |I'| > (1+\epsilon_0)^{-1}L}} \left| S_{I'} \right| > t \right] \leq \mathbb{P}\left[ \max_{l \leq L} \left| S_{\ell}\right| > (1-\epsilon_1 t)\right] + \mathbb{P}\left[ \left|S_{\epsilon_0L}\right| > \epsilon_1 t\right].\]
To see this, note that any subinterval $I'\subseteq [L]$ of size $> (1+\epsilon_0)^{-1}L$ must have its left endpoint be $< \left(1 - \frac{1}{1+\epsilon_0}\right)L < \epsilon_0 L$. By Lemma \ref{lem:max}, this is:
\[\leq C exp\left( -\frac{t^2 (1-\epsilon_1)^2}{2(L + Mt(1-\epsilon_1)/3)}\right) + Cexp\left( - \frac{t^2 \epsilon_1^2}{2(\epsilon_0L + M t \epsilon_1 /3)}\right).\]
Recalling that $\epsilon_1^2 > \epsilon_0$, this is:
\[ \leq 2C exp\left( - \frac{t^2(1-\epsilon_1)^2}{2(L + Mt\epsilon_1/3\epsilon_0)} \right).\]
\end{proof}

Since our variables $X_i$ are bounded in absolute value by $M$, the maximum of $S_{I'}^2$ for subintervals $I' \subseteq I$ is always bounded as $\leq (M |I|)^2$. Thus, for intervals $I$ which are too small with respect to $N''$, $I$ cannot possibly be medium. More specifically, $\mathbb{I}_{I,Med}$ can only be equal to 1 when $|I|> \frac{2(1+\epsilon)}{M^2(1+\epsilon_0)} \ln \ln N''$. Thus, we can assume $N$ (and hence $N''$) is sufficiently large so that applying Corollary \ref{cor:max} yields:
\begin{equation}\label{mediumprobability}
\mathbb{P}\left[ \mathbb{I}_{I,Med} = 1\right] \leq C'exp\left(-(1+\epsilon_2)\ln \ln N''\right) = C' (\ln N'')^{-(1+\epsilon_2)},
\end{equation}
for some positive $\epsilon_2$. To see this, note that we are applying the corollary with the value $t = \sqrt{2(1+\epsilon)(1+\epsilon_0)^{-1}|I|\ln \ln N''}$, and by (\ref{epsilons}), $(1+\epsilon)(1+\epsilon_0)^{-1}(1-\epsilon_1)^2 > 1$. We consider $N''$ large enough so that $L$ dominates the term $Mt\epsilon_1/3\epsilon_0$ in the denominator. Here, $L$ is the number of integers in the interval $I$, which is asymptotically equal to $|I|$ (the length of the real interval $I$).

We consider intervals in level $i$ of $\mathcal{H}_j$ (where $i$ is large enough so that these intervals can be medium). We let $k_{ij}$ denote the number of such intervals, and $k_{ij} \sim (1+\epsilon')^{n-i}$. We note that the indicator random variables of these intervals $\mathbb{I}_{I,Med}$ are independent, because the intervals are disjoint. We will let $\ell_{ij}$ denote the total length of all the medium intervals in level $i$ of $H_j$. By a Chernoff bound:
\begin{equation}\label{chernoff}
\mathbb{P}\left[ \sum_{\substack{I \in H_j \\ level i}} \mathbb{I}_{I,Med} \geq 2C'k_{ij}(\ln N'')^{-(1+\epsilon_2)}\right] \leq exp\left(-C'k_{ij}(\ln N'')^{-(1+\epsilon_2)}/3\right).
\end{equation}
When the event \[\sum_{\substack{I \in H_j \\ level i}} \mathbb{I}_{I,Med} \leq 2C'k_{ij}(\ln N'')^{-(1+\epsilon_2)}\] occurs, we have $\ell_{ij} \leq 2C'N'(\ln N'')^{-(1+\epsilon_2)}$. We will call this event $E_{ij}$.

Since $k_{ij} \sim (1+\epsilon')^{n-i}$ and $\ln N'' \sim n$, we can choose a constant $d$ large enough so that:
\[ \sum_{n} \sum_{i < n - d\ln n} exp\left(-C'k_{ij}(\ln N'')^{-(1+\epsilon_2)}/3\right) \leq \sum_{n} \sum_{i < n-d\ln n} exp\left( -C''(1+\epsilon')^{n-i} n^{-1-\epsilon_2}\right) < \infty.\] By the Borel-Cantelli Lemma, we then have that almost surely, \emph{all} of the events $E_{ij}$ (for $i < n-d\ln n$) occur when $n$ is sufficiently large (i.e. $N'$ is sufficiently large).

To address the medium intervals in $\mathcal{H}_j$ for levels with $i \geq n-d\ln n$, we note there are $d \ln n \sim d \ln \ln N'$ such levels, and by (\ref{mediumprobability}), the expected length of the medium intervals on each level is at most $C'N' (\ln N'')^{-(1+\epsilon_2)}$. Therefore, for each such $i$, by Markov's inequality (for $\epsilon_3 < \epsilon_2$):
\[\mathbb{P}\left[\ell_{ij} \geq C' N' (\ln N'')^{-\epsilon_2+\epsilon_3}\right] \leq \frac{1}{(\ln N'')^{1+\epsilon_3}}.\]
Now, the quantity $\frac{d\ln n}{(\ln N'')^{1+\epsilon_3}} \sim \frac{d \ln n}{(n \ln (1+\epsilon'))^{1+\epsilon_3}}$ converges when we sum over $n$. Hence, another application of the Borel-Cantelli Lemma tells us that, almost surely, for $n$ sufficiently large we will have $\ell_{ij} \leq C' N'(\ln N'')^{-\epsilon_2 + \epsilon_3}$ for all $i \geq n-d\ln n$.

Putting everything together, we have that almost surely, for sufficiently large $n$:
\[\sum_{i,j} \ell_{ij} \leq \frac{2C'h}{\ln (1+\epsilon')} N' \ln N' (\ln N'')^{-(1+\epsilon_2)} + C' h (d \ln n) N'(\ln N'')^{-\epsilon_2 + \epsilon_3}.  \]
Since $\ln N'' \sim \ln N'$ and $\ln n \sim \ln \ln N'$, we see that this entire quantity is $o(N)$. Thus, we have proven:

\begin{theorem}\label{mediumintervals} Almost surely, for sufficiently large $N$, the length of the intervals in the maximal partition for $N$ which are medium in the sense of (\ref{def:medium}) is $o(N)$.
\end{theorem}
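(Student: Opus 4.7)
The plan is to assemble the ingredients already developed in Section~\ref{sec:medium} into a clean proof. First I reduce to the bounded case via the truncation $\overline{X}_i = X_i\mathbb{I}_{|X_i|\le M}$, using the triangle inequality for $\|\cdot\|_{V^2}$ and absorbing the tail piece $Z_i - \mathbb{E}[Z_i]$ into the bad-interval bound of Theorem~\ref{thm:badintervals} (with $M$ chosen large enough that $B\cdot\mathrm{Var}[Z_i]$ is negligible compared with $\epsilon$). So I may assume $|X_i|\le M$ with mean zero and variance essentially one.

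Next, I invoke Lemma~\ref{lem:intervals2}: every subinterval of $[N]$ is contained in some $I\in\mathcal{H}$ with $|I|<(1+\epsilon')^2|I'|$. Consequently, any interval $I'$ that is medium in the sense of (\ref{def:medium}) for some $N\in(N'',N']$ is contained in an $I\in\mathcal{H}$ which is ``medium'' in the new sense (based on the maximum of $S_{I'}^2$ over $I'\subseteq I$ with $|I'|>(1+\epsilon_0)^{-1}|I|$). Hence it suffices to bound $\sum_{I\in\mathcal{H}} |I|\cdot\mathbb{I}_{I,\mathrm{Med}}$. The key single-interval estimate comes from Corollary~\ref{cor:max} applied with $t=\sqrt{2(1+\epsilon)(1+\epsilon_0)^{-1}|I|\ln\ln N''}$: because $(1+\epsilon)(1+\epsilon_0)^{-1}(1-\epsilon_1)^2>1$ by (\ref{epsilons}), the exponent in the resulting Gaussian-type tail beats $\ln\ln N''$ by a factor $1+\epsilon_2$ for some $\epsilon_2>0$, yielding $\mathbb{P}[\mathbb{I}_{I,\mathrm{Med}}=1]\le C'(\ln N'')^{-(1+\epsilon_2)}$.

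Now I split each family $\mathcal{H}_j$ by level $i$. For the ``dense'' levels $i<n-d\ln n$ (where $n\approx \ln N'/\ln(1+\epsilon')$), the number $k_{ij}\sim(1+\epsilon')^{n-i}$ of disjoint intervals is large enough that a Chernoff bound on the sum of independent indicators $\mathbb{I}_{I,\mathrm{Med}}$ shows the total medium length on this level concentrates near its expectation $\le 2C'N'(\ln N'')^{-(1+\epsilon_2)}$, with exceptional probability summable in $n$; Borel--Cantelli then handles all such levels simultaneously almost surely. For the ``sparse'' levels $i\ge n-d\ln n$, of which there are $O(\ln\ln N')$, I instead use Markov's inequality with a slightly weakened exponent $\epsilon_2-\epsilon_3$, obtaining a per-level failure probability of $(\ln N'')^{-(1+\epsilon_3)}$ which, multiplied by the number of levels, still gives a summable series in $n$, so another Borel--Cantelli application controls these levels almost surely.

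Putting the two regimes together, almost surely for all sufficiently large $n$,
\[
\sum_{i,j}\ell_{ij}\;\le\;\frac{2C'h}{\ln(1+\epsilon')}\,N'\ln N'\,(\ln N'')^{-(1+\epsilon_2)}\;+\;C'h\,(d\ln n)\,N'\,(\ln N'')^{-\epsilon_2+\epsilon_3},
\]
and both terms are $o(N)$ since $\ln N''\sim \ln N'\sim\ln N$ and $\ln n\sim\ln\ln N'$. I expect the main obstacle to be the bookkeeping on the two Borel--Cantelli applications and in particular verifying that the constants in (\ref{epsilons}) can be chosen so that $\epsilon_2>0$ truly arises from Corollary~\ref{cor:max}; everything else is a rather mechanical assembly of the estimates developed earlier.
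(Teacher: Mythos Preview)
Your proposal is correct and follows essentially the same approach as the paper: the reduction to bounded variables via truncation and Theorem~\ref{thm:badintervals}, the covering by $\mathcal{H}$ through Lemma~\ref{lem:intervals2}, the single-interval bound (\ref{mediumprobability}) from Corollary~\ref{cor:max}, and the split into dense levels (handled by Chernoff plus Borel--Cantelli) and sparse levels (handled by Markov plus Borel--Cantelli) are exactly the paper's argument, culminating in the same displayed inequality for $\sum_{i,j}\ell_{ij}$.
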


This completes our proof of Theorem \ref{thm:ubound}, since the total contribution of the medium intervals is at most $B \ln \ln N$ times the length of the medium intervals, and the contribution of the good intervals is at most $(2+\epsilon)N \ln \ln N$.

\section{The Lower Bound}
We now prove the following theorem:

\begin{theorem}\label{thm:lowerbound} We let $X_1, X_2, \ldots $ denote independent, identically distributed random variables with $\mathbb{E}[X_i] = 0$, $Var[X_i] =1$, and $\mathbb{E}[|X_i|^{2(1+\delta})] < \infty$ for some $\delta>0$. Then, for every $\epsilon>0$,
\[\liminf_{N\rightarrow \infty} \frac{\left|\left| \{X_i\}^N \right| \right|^2_{V^2}}{N \ln \ln N} \geq 2(1-\epsilon) \text{ a.s.}\]
\end{theorem}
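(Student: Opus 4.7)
The plan is to derive the almost-sure lower bound from a quantitative in-probability lower bound (with polynomial-in-$M$ failure probability) via independent block decomposition and Borel-Cantelli. I would carry out four main steps.

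First, I would reduce to bounded variables by the same truncation used in Section \ref{sec:medium}. Writing $X_i = (\overline{X}_i - \mathbb{E}\overline{X}_i) + (Z_i - \mathbb{E}Z_i)$ with $\overline{X}_i = X_i \mathbb{I}[|X_i|\leq M]$, the triangle inequality for the $V^2$-norm yields $\|\{X_i\}^N\|_{V^2} \geq \|\{\overline{X}_i - \mathbb{E}\overline{X}_i\}^N\|_{V^2} - \|\{Z_i - \mathbb{E}Z_i\}^N\|_{V^2}$; applying Theorem \ref{thm:ubound} to the second term (whose variance is arbitrarily small for $M$ large) reduces matters to bounded i.i.d.\ variables with variance close to $1$.

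Second, the core step: for bounded i.i.d.\ variables satisfying the $\delta$-moment condition, I would establish the quantitative tail estimate
\[
\mathbb{P}\bigl[\|\{X_i\}^M\|_{V^2}^2 < 2(1-\epsilon)M\ln\ln M\bigr] \leq M^{-\eta}
\]
for some $\eta = \eta(\delta,\epsilon) > 0$ and $M$ sufficiently large. I would exhibit an explicit candidate partition of $[M]$ produced by a multi-scale search, using the Berry-Esseen theorem (Lemma \ref{lem:berry-esseen}) for sharp Gaussian comparison at the LIL-scale threshold $\sqrt{2(1-\epsilon)M\ln\ln M}$, together with the maximal inequality of Corollary \ref{cor:max} to promote single-endpoint estimates to maxima over subintervals. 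The moment hypothesis $\mathbb{E}|X_i|^{2+2\delta}<\infty$ enters precisely as a polynomially-decaying Berry-Esseen error, which turns the naive $o(1)$ tail into the required polynomial one.

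Third, I would boost the per-block estimate to a statement for $[N]$ by independent block decomposition, and then pass to the almost-sure statement. Partition $[N]$ into $K = \lfloor \log N \rfloor$ disjoint blocks $B_1,\ldots,B_K$ of size $M = \lfloor N/K\rfloor$. Concatenating per-block optimal partitions yields the superadditivity $\|\{X_i\}^N\|_{V^2}^2 \geq \sum_k \|\{X_i\}_{B_k}\|_{V^2}^2$, and by independence and the union bound, with probability $\geq 1 - KM^{-\eta} \geq 1 - N^{-\eta/2}$, every block satisfies the per-block lower bound, so the sum exceeds $2(1-\epsilon)KM\ln\ln M$. Since $\ln\ln M = \ln\ln N - o(1)$ for this choice of $K$, the bound $\|\{X_i\}^N\|_{V^2}^2 \geq 2(1-2\epsilon)N\ln\ln N$ holds with polynomially small failure probability. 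Applying this along the subsequence $N_j = \lfloor(1+\rho)^j\rfloor$ for small $\rho > 0$ makes the failure probabilities summable, so Borel-Cantelli gives the almost-sure bound at $N_j$. Monotonicity of $\|\{X_i\}^N\|_{V^2}^2$ in $N$ (any partition of $[N_j]$ extends to one of $[N]$ by adjoining singletons) and $N_{j+1}/N_j = 1+\rho+o(1)$ then extend the bound to all $N$, with a loss $(1+\rho)^{-1}$ that can be absorbed into $\epsilon$.

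The main obstacle is the second step: simultaneously achieving the tight constant $2$ and a polynomial-in-$M$ failure probability. This requires a sharp use of Berry-Esseen at an extreme Gaussian threshold, controlled precisely by the $\delta$-moment, and a multi-scale construction within $[M]$ that loses no constants at any stage. Without the $\delta$-moment, only an $o(1)$-rate is available, which is why Qian's Theorem \ref{thm:Qianlower} gives only an unspecified constant $c$; the $\delta$-moment is exactly the input needed to both sharpen the constant to $2$ and upgrade ``in probability'' to ``almost surely''.
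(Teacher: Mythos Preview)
Your architecture---truncate to bounded variables, prove a quantitative in-probability lower bound, then Borel--Cantelli along a geometric subsequence with monotonicity to interpolate---is essentially the paper's. Two points need correction, though neither is fatal.

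First, the rate $M^{-\eta}$ in Step~2 is not what the tools you cite deliver, and I do not see how to obtain it. The natural argument (which is what the paper does) goes as follows: for each starting point $j$, test the increments $S_{j+s^k}-S_{j+s^{k-1}}$ at geometric scales $s^k$ with $k$ ranging over an interval of length $\sim\log_s M$; these are independent, and each exceeds the LIL threshold with probability $\gtrsim k^{-\gamma}$ for some $\gamma<1$ (Lemma~\ref{lem:inequality}, or equivalently Berry--Esseen). Hence the per-point failure probability is $\prod_k(1-k^{-\gamma})\leq\exp\bigl(-c(\log M)^{1-\gamma}\bigr)$, and Markov's inequality on the count of failing $j$'s gives the same order for the per-$[M]$ failure. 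This is far weaker than $M^{-\eta}$, but it is still summable along $N_j=\lfloor(1+\rho)^j\rfloor$ since $\sum_j\exp(-cj^{1-\gamma})<\infty$, so your Step~4 survives unchanged. This also makes the block decomposition in Step~3 superfluous: once you have the per-$[M]$ bound, apply it directly with $M=N_j$ and skip the blocks (which in fact only worsen the estimate, since you replace one failure event at scale $N$ by $K$ failure events at the smaller scale $M$).

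Second, Step~2 is where essentially all the work lies, and you have only gestured at it. The paper's implementation (Lemmas~\ref{errorprob2} and~\ref{lem:BorelCantelli} and the greedy construction that follows) spells out the two ingredients you need: (i) the per-point multi-scale search above, and (ii) the observation that once at most an $\epsilon$-fraction of starting points $j$ fail, a greedy partition---at each $j$ take the optimal jump of length $\leq M^{1-\alpha}$ if one exists, else step by~$1$---covers all but $o(M)$ of the length with intervals contributing $\geq 2(1-\epsilon)\cdot(\text{length})\cdot\ln\ln M$. Your invocation of Corollary~\ref{cor:max} is misplaced: that is an upper-tail inequality used in the proof of Theorem~\ref{thm:ubound} and plays no role in the lower bound. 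Finally, after truncation the variables are bounded and hence have all moments; the $2(1+\delta)$-moment hypothesis is consumed entirely in Step~1 (via Theorem~\ref{thm:ubound} applied to the tail piece $Z_i-\mathbb{E}Z_i$), not in Step~2 as you suggest.
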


We first argue that it suffices to prove this theorem when the random variables $X_i$ are bounded (i.e. $|X_i| \leq M$ for some constant $M$). To see why, we again consider the truncation of an unbounded $X_i$, denoted by $\overline{X_i} := X_i \mathbb{I}_{|X_i|\leq M}$, and we define $Z_i := X_i - \overline{X}_i$. We fix values $\epsilon_1, \epsilon_2$ sufficiently small and a value of $M$ sufficiently high such that:
\begin{equation}\label{constraint}
\sqrt{2(1-\epsilon_2)Var[\overline{X}_i]} - \sqrt{2(1+\epsilon_1)Var[Z_i]} \geq \sqrt{2(1-\epsilon)}.
\end{equation}
There exists such a choice of $M$, $\epsilon_1$, and $\epsilon_2$ because $\mathbb{E}[X_i^2] < \infty$, so choosing $M$ sufficiently large will make $Var[\overline{X}_i]$ sufficiently close to 1 and $Var[Z_i]$ sufficiently close to 0.

Now, $\frac{Z_i- \mathbb{E}[Z_i]}{\sqrt{Var[Z_i]}}$ is a random variable with expectation equal to 0, variance equal to 1, and finite $2(1+\delta)$ moment. We can hence apply Theorem \ref{thm:ubound} with $\epsilon_1$ to conclude that:
\begin{equation}\label{z}
\limsup_{N \rightarrow \infty} \frac{\left|\left| \{Z_i-\mathbb{E}[Z_i]\}^N \right| \right|^2_{V^2}}{N \ln \ln N} \leq 2(1+\epsilon_1)Var[Z_i] \emph{ a.s.}
\end{equation}

We note that $\left|\left| \{\overline{X}_i - \mathbb{E}[\overline{X}_i]\}^N \right| \right|^2_{V^2}= \left|\left| \{X_i - (Z_i-\mathbb{E}[Z_i])\}^N \right| \right|^2_{V^2}.$ (Here, we have used the fact that $\mathbb{E}[X_i]=0$, so $\mathbb{E}[\overline{X}_i] = - \mathbb{E}[Z_i]$.)
Employing the triangle inequality, we have:

\begin{equation}\label{triangle}
\left|\left| \{X_i\}^N \right| \right|_{V^2} \geq \left|\left| \{\overline{X}_i - \mathbb{E}[\overline{X}_i]\}^N \right| \right|_{V^2} - \left|\left| \{Z_i- \mathbb{E}[Z_i]\}^N \right| \right|_{V^2}.
\end{equation}

If we prove Theorem \ref{thm:lowerbound} for bounded variables, we can apply it to the $\overline{X}_i - \mathbb{E}[\overline{X}_i]$'s with $\epsilon_2$. Note that these are mean zero random variables with finite variance and finite $2(1+\delta)$ moment, and we can divide them by $\sqrt{Var[\overline{X}_i]}$ to make them have variance equal to one. This gives us:
\[\liminf_{N\rightarrow \infty} \frac{\left|\left| \{\overline{X}_i\}^N \right| \right|_{V^2}}{\sqrt{N \ln \ln N}} \geq \sqrt{2(1-\epsilon_2)Var[\overline{X}_i]} \text{ a.s.}\]
Combining this with (\ref{triangle}), (\ref{z}), and (\ref{constraint}), we have that
\[\liminf_{N\rightarrow \infty} \frac{\left|\left| \{X_i\}^N \right| \right|^2_{V^2}}{N \ln \ln N} \geq 2(1-\epsilon) \text{ a.s.}.\]
Therefore, we may assume from this point forward that the $X_i$'s are bounded in absolute value by a constant $M$.

To prove the theorem for bounded variables $X_i$, we will use an inequality proven in \cite{G} and a general strategy motivated by \cite{T}. We begin by following the approach in \cite{G} for proving the lower bound portion of the law of the iterated logarithm for bounded random variables.
We fix a value $0 < \alpha < 1/2$ and a parameter $\epsilon_3$ whose value will be set later.

We consider the sequence of integers $m_1, m_2, \ldots$ defined by $m_k := s^k$, for some suitably large integer $s >1$. As usual, we let $S_{m_k}$ denote the sum $X_1 + \cdots + X_{m_k}$. We note the following inequality from \cite{G}, p. 282:

\begin{lemma}\label{lem:inequality} For every $\epsilon'>0$, when  $s$ is sufficiently large with respect to $\epsilon'$, there exists $0 < \gamma < 1$ such that for all sufficiently large $k$,
\[\mathbb{P}\left[S_{m_k} - S_{m_{k-1}} \geq (1-\epsilon')\sqrt{2(m_k-m_{k-1}) \ln \ln m_k}\right]\geq \frac{1}{k^{\gamma}}.\]
\end{lemma}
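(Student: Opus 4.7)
The plan is to reduce the event $\{S_{m_k} - S_{m_{k-1}} \geq (1-\epsilon')\sqrt{2 n_k \ln \ln m_k}\}$, where $n_k := m_k - m_{k-1} = s^{k-1}(s-1)$, to a standard normal tail estimate. Normalizing by $\sqrt{n_k}$, I must lower-bound $\mathbb{P}\!\left[(S_{m_k}-S_{m_{k-1}})/\sqrt{n_k} \geq t_k\right]$ where $t_k := (1-\epsilon')\sqrt{2\ln\ln m_k}$. Since the reduction at the beginning of the section allows us to assume $|X_i| \leq M$, all moments are finite and Lemma \ref{lem:berry-esseen} (Berry--Esseen), applied with any fixed $\gamma_0 \in (0,1]$, yields
\[\mathbb{P}\!\left[\frac{S_{m_k}-S_{m_{k-1}}}{\sqrt{n_k}} \geq t_k\right] \geq 1 - \Phi(t_k) - C\, n_k^{-\gamma_0/2},\]
where $\Phi$ is the standard normal CDF and $C$ depends on $M$ and $\gamma_0$.

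The main calculation is the Gaussian tail. Using the classical lower bound $1-\Phi(t) \geq \frac{1}{\sqrt{2\pi}}\cdot\frac{t}{t^2+1}\, e^{-t^2/2}$ together with $t_k^2/2 = (1-\epsilon')^2 \ln\ln m_k = (1-\epsilon')^2(\ln k + \ln\ln s)$, I obtain
\[1 - \Phi(t_k) \geq \frac{c}{\sqrt{\ln k}}\, k^{-(1-\epsilon')^2}\]
for an absolute constant $c>0$ and all sufficiently large $k$. Since $\epsilon' > 0$, we have $(1-\epsilon')^2 < 1$, and therefore any $\gamma \in ((1-\epsilon')^2, 1)$ will satisfy $k^{-(1-\epsilon')^2}/\sqrt{\ln k} \gg k^{-\gamma}$ for large $k$.

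Finally, I combine the two bounds. The Berry--Esseen error $n_k^{-\gamma_0/2} = O(s^{-k\gamma_0/2})$ decays exponentially in $k$, while the Gaussian lower bound decays only polynomially, so for any fixed $s > 1$ the error term is dwarfed by the main term once $k$ is large; this gives $\mathbb{P}[\cdots] \geq k^{-\gamma}$, as required. The only genuine subtlety is the moderate-deviation calibration between the tail height $t_k = O(\sqrt{\ln k})$ and the Berry--Esseen error, and this is trivial because $n_k$ grows geometrically in $k$. Should one prefer a cleaner path, Cram\'er's moderate-deviation theorem for bounded i.i.d.\ summands directly yields $\mathbb{P}[S_{n_k}/\sqrt{n_k} \geq t_k] = (1-\Phi(t_k))(1+o(1))$ throughout the range $t_k = o(n_k^{1/6})$, which is comfortably satisfied since $t_k = O(\sqrt{\ln k})$ and $n_k \asymp s^k$.
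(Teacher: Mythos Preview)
Your argument is correct. The paper does not actually prove this lemma; it simply quotes it as ``the following inequality from \cite{G}, p.~282,'' so there is no proof in the paper to compare against beyond the citation. What you have written is essentially the standard textbook derivation (and indeed the one Galambos gives): normalize the increment, apply Berry--Esseen to compare with the Gaussian tail, use the lower bound $1-\Phi(t)\geq \frac{t}{\sqrt{2\pi}(t^2+1)}e^{-t^2/2}$, and observe that the resulting main term $\asymp k^{-(1-\epsilon')^2}/\sqrt{\ln k}$ dominates both the Berry--Esseen error $O(s^{-k\gamma_0/2})$ and the target $k^{-\gamma}$ for any $\gamma\in((1-\epsilon')^2,1)$.

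Two small remarks. First, your proof does not actually use the hypothesis that $s$ is large relative to $\epsilon'$; any integer $s>1$ works, with $\gamma$ depending only on $\epsilon'$. That hypothesis in the paper's statement is harmless but unnecessary for the lemma itself (it is used elsewhere in the section, via \eqref{constraint3}). Second, you invoke the reduction to bounded $X_i$ to justify Berry--Esseen, but in fact the ambient assumption $\mathbb{E}|X_i|^{2+2\delta}<\infty$ already suffices for Lemma~\ref{lem:berry-esseen}; boundedness is not needed at this step, though it does no harm.
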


We fix a value of $s$ and a value $\epsilon_4$ such that:
\begin{equation}\label{constraint3}
(1-\epsilon_4)\sqrt{1-1/s} > \sqrt{1-\epsilon_3},
\end{equation}
and $s$ is sufficiently large with respect to $\epsilon_4$ to apply Lemma \ref{lem:inequality}.
We consider values of $N$ which are powers of $s$, i.e. $N = s^n$ for some integer $n$.

We now prove:
\begin{lemma}\label{errorprob2}For $N=s^n$ sufficiently large and for each fixed $j \in [N]$,
\begin{equation}\label{probtobound}
\mathbb{P}\left[ \sup_{1\leq i_1 \leq i_2 \leq N^{1-\alpha}} \frac{\left(S_{i_1+j}-S_j\right)^2 + \left(S_{i_2+j}-S_{i_1+j}\right)^2}{i_2} < 2(1-\epsilon_3)\ln\ln N\right] \leq exp(-c n^{1-\gamma}),
\end{equation}
for some constants $c, \gamma>0 $ independent of $n$ with $\gamma <1$.
\end{lemma}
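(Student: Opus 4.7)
The plan is to discard the nonnegative term $(S_{i_1 + j} - S_j)^2$ and force the supremum in (\ref{probtobound}) to be large by exhibiting a single geometric scale $(i_1, i_2) = (m_{k-1}, m_k)$ on which the second term alone already exceeds the target. Set $\tilde{S}_\ell := S_{j+\ell} - S_j$; by i.i.d.\ these shifted partial sums have the same distribution as $S_\ell$, and Lemma \ref{lem:inequality} applied with $\epsilon' = \epsilon_4$ gives, for $k$ sufficiently large, $\mathbb{P}(A_k) \geq k^{-\gamma}$ where
\[
A_k := \Bigl\{\, \tilde{S}_{m_k} - \tilde{S}_{m_{k-1}} \geq (1-\epsilon_4)\sqrt{2(m_k - m_{k-1})\ln\ln m_k}\,\Bigr\}.
\]
Crucially, the family $\{A_k\}_k$ is independent, because $A_k$ depends only on the block $X_{j+m_{k-1}+1},\dots,X_{j+m_k}$, and these blocks are pairwise disjoint.

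On the event $A_k$, using $m_k - m_{k-1} = m_k(1 - 1/s)$, I get
\[
\frac{\tilde{S}_{m_{k-1}}^2 + (\tilde{S}_{m_k} - \tilde{S}_{m_{k-1}})^2}{m_k} \;\geq\; 2(1-\epsilon_4)^2\bigl(1-\tfrac{1}{s}\bigr)\ln\ln m_k .
\]
The constraint (\ref{constraint3}) was arranged precisely so that $(1-\epsilon_4)^2(1-1/s) > 1 - \epsilon_3$; setting $\rho := (1-\epsilon_3)/\bigl((1-\epsilon_4)^2(1-1/s)\bigr) < 1$, the right-hand side exceeds $2(1-\epsilon_3)\ln\ln N$ whenever $\ln\ln m_k \geq \rho \ln\ln N$. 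Since $\ln\ln m_k = \ln k + \ln\ln s$ and $\ln\ln N = \ln n + \ln\ln s$, this last inequality holds uniformly in $k$ throughout a range of the form $\bigl[\lceil n^\beta\rceil,\lfloor(1-\alpha)n\rfloor\bigr]$ for any fixed $\beta \in (\rho, 1)$, provided $n$ is large enough. For every such $k$, the occurrence of $A_k$ forces the sup in (\ref{probtobound}) to be at least $2(1-\epsilon_3)\ln\ln N$.

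Hence the event on the left of (\ref{probtobound}) is contained in $\bigcap_{k=\lceil n^\beta\rceil}^{\lfloor (1-\alpha)n\rfloor} A_k^c$, and by independence together with $1 - x \leq e^{-x}$,
\[
\mathbb{P}\Bigl(\bigcap_k A_k^c\Bigr) \;\leq\; \exp\!\Bigl(-\!\!\sum_{k=\lceil n^\beta\rceil}^{\lfloor(1-\alpha)n\rfloor} k^{-\gamma}\Bigr) \;\leq\; \exp(-c\, n^{1-\gamma}),
\]
for some $c>0$, using $\sum_{k \leq n} k^{-\gamma} \asymp n^{1-\gamma}$ since $\gamma \in (0,1)$. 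The only nontrivial point is tracking the several small constants $\epsilon_3, \epsilon_4, \alpha, s$ through (\ref{constraint3}) and checking that the slack $\rho < 1$ suffices to absorb the $O(\ln(n/k))$ gap between $\ln\ln m_k$ and $\ln\ln N$; once that bookkeeping is done, the argument is a routine independent-blocks ``not even one success'' estimate, and the resulting $\exp(-cn^{1-\gamma})$ is summable in $n$, which is what the subsequent Borel--Cantelli step of the lower-bound proof will consume.
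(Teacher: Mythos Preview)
Your proposal is correct and follows essentially the same approach as the paper: both restrict to the pairs $(i_1,i_2)=(m_{k-1},m_k)$, invoke Lemma~\ref{lem:inequality} on the independent increments $S_{j+m_k}-S_{j+m_{k-1}}$, use constraint~(\ref{constraint3}) to convert $(1-\epsilon_4)^2(1-1/s)$ into $(1-\epsilon_3)$, and bound the probability that no $A_k$ occurs by $\exp\bigl(-\sum_k k^{-\gamma}\bigr)\le\exp(-cn^{1-\gamma})$. The only cosmetic difference is the range of $k$: the paper takes $k\in[n/2,(1-\alpha)n]$ and absorbs the additive gap $\ln\ln m_k=\ln\ln N-O(1)$ by taking $N$ large, whereas you take $k\in[n^\beta,(1-\alpha)n]$ with $\beta\in(\rho,1)$ so that $\ln\ln m_k\ge\rho\ln\ln N$ holds by design; both choices yield the same $n^{1-\gamma}$ lower bound on the sum.
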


\begin{proof}
We consider values of $i_2$ which are $=s^k$ for $n/2 \leq k\leq n(1-\alpha)$ (i.e. $i_2 = m_k$). For each such $k$, Lemma \ref{lem:inequality} implies that:
\begin{equation}\label{singlestep}
\mathbb{P}\left[ S_{j+m_k} - S_{j+m_{k-1}}> (1-\epsilon_4)\sqrt{2(m_k-m_{k-1})\ln \ln m_k}\right] \geq \frac{1}{k^{\gamma}}.
\end{equation}
Now, we suppose that for some $n/2 \leq k \leq n(1-\alpha)$, we have
\[
S_{j+m_k} - S_{j+m_{k-1}}> (1-\epsilon_4)\sqrt{2(m_k-m_{k-1})\ln \ln m_k}.
\]
We will call this event $E_k$, and we denote its complement by $\overline{E}_k$. When $E_k$ occurs, we consider $i_2 := m_k$ and $i_1 := m_{k-1}$. Note that $i_2 \leq N^{1-\alpha}$.
Then:
\[\left(S_{i_1+j}-S_j\right)^2 + \left(S_{i_2+j}-S_{i_1+j}\right)^2 \geq 2(1-\epsilon_4)^2(m_k-m_{k-1})\ln \ln m_k.\]

Using (\ref{constraint3}), $m_k -m_{k-1} = s^k (1-1/s)$, and $k\geq n/2$, this is $> 2(1-\epsilon_3)s^k \ln \ln s^k$. In fact, since $\ln \ln N = \ln \ln s^n$ and $\ln \ln s^{n/2} = \ln \ln s^n + \ln 1/2)$, for sufficiently large $N$ we have:
\[\left(S_{i_1+j}-S_j\right)^2 + \left(S_{i_2+j}-S_{i_1+j}\right)^2 \geq 2(1-\epsilon_3)i_2 \ln \ln N. \]

Therefore, the probability on the left hand side of (\ref{probtobound}) is at most the probability that the event $E_k$ fails to occur for all $n/2 \leq k \leq n(1-\alpha)$. Observe that these events for different $k$'s are independent, because they involve disjoint sets of the variables $X_i$. Thus, by Lemma \ref{lem:inequality},
\[ \mathbb{P}\left[\overline{E}_k \forall k \in [n/2, (1-\alpha)n]\right] \leq \prod_{k = \lceil n/2 \rceil}^{\lfloor n(1-\alpha)\rfloor } \left(1 - \frac{1}{k^\gamma}\right) \leq \prod_{k = \lceil n/2 \rceil}^{\lfloor n(1-\alpha)\rfloor } e^{-\frac{1}{k^\gamma}},\]
since for positive real numbers $x$, $1-x \leq e^{-x}$. This can be rewritten as:
\[exp\left( -\sum_{k = \lceil n/2\rceil}^{\lfloor n(1-\alpha)\rfloor} \frac{1}{k^\gamma}\right).\]

We observe that
\[\sum_{k = \lceil n/2\rceil}^{\lfloor n(1-\alpha)\rfloor} \frac{1}{k^\gamma} \geq \frac{n(1-\alpha-1/2)-2}{n^\gamma (1-\alpha)^\gamma}.\]
Thus, we can choose a constant $c$ independent of $n$ such that
\[\sum_{k = \lceil n/2\rceil}^{\lfloor n(1-\alpha)\rfloor} \frac{1}{k^\gamma} \geq cn^{1-\gamma}.\]
Therefore, we have shown that:
\[\mathbb{P}\left[ \sup_{1\leq i_1 \leq i_2 \leq N^{1-\alpha}} \frac{\left(S_{i_1+j}-S_j\right)^2 + \left(S_{i_2+j}-S_{i_1+j}\right)^2}{i_2} < 2(1-\epsilon_3)\ln\ln N\right] \leq exp(-cn^{1-\gamma}).\]
\end{proof}

Of course, we need to address \emph{all} values of $N$ and not just those that are powers of $s$. To allow us to handle general values of $N$, we introduce the following the family of intervals, which we denote by $\mathcal{L}$. We let $C$ denote a sufficiently large constant which is a power of $s$ (just how large it should be will be determined later). We define $\mathcal{L}$ as a union of $C$ different interval families, denoted by $\mathcal{L}_0, \ldots, \mathcal{L}_{C-1}$. $\mathcal{L}_0$ consists of all intervals of the form: $(1+s+\cdots + s^{k-1}, 1+s+ \cdots +s^k]$, where $k$ is a non-negative integer. We note that these intervals are disjoint, and that for each $k$, there is exactly one interval of size $s^k$.
More generally, $\mathcal{L}_i$ consists of all intervals of the form: \[\left(1+s+\cdots +s^{k-1}+ \frac{is^{k+1}}{C}, 1+ s+ \cdots + s^k + \frac{is^{k+1}}{C}\right]\] (for $k$'s large enough so that $C$ divides $s^{k+1}$ when $i\neq 0$). Each $\mathcal{L}_i$ is a union of disjoint intervals, one of size $s^k$ for each (large enough) $k$. To visualize these intervals, first consider $\mathcal{L}_0$ as an infinite stretch of intervals, starting with one of size 1, then one of size $s$, then one of size $s^2$, and so on, each beginning just where the previous one left off. Now imagine dividing each of the intervals in $\mathcal{L}_0$ into $C$ pieces of equal size. Then $\mathcal{L}_1$ can be thought of as a copy of $\mathcal{L}_0$ where the intervals have been shifted so that they now end at what used to be the end of the first piece of the next interval. In $\mathcal{L}_2$, they are shifted so that they end at what used to the end of the second piece, and so on. Note that these shifts will cause (relatively small) gaps between the intervals in $\mathcal{L}_i$ for $i >0$.

We let $I$ denote an interval in $\mathcal{L}$. For each $j \in I$, we let $A_j$ denote the event whose probability is bounded in (\ref{probtobound}) and $\overline{A}_j$ denote its complement.
We let $P_I$ denote the set of points $j \in I$ for which $A_j$ occurs. We let $E_I$ denote the event that $|P_I| > \epsilon_3 |I|$. We now show:

\begin{lemma}\label{lem:BorelCantelli} Almost surely, only finitely many of the events $E_I$ occur (for $I \in \mathcal{L}$).
\end{lemma}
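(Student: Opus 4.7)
The plan is to apply the Borel--Cantelli lemma, so it suffices to establish $\sum_{I \in \mathcal{L}} \mathbb{P}[E_I] < \infty$.

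For each $I \in \mathcal{L}$ with $|I| = s^k$, I would take $N = |I| = s^k$ (so $n = k$) as the ambient value in the definition of $A_j$ and apply Lemma \ref{errorprob2} directly, obtaining $\mathbb{P}[A_j] \leq \exp(-c k^{1-\gamma})$ for every $j \in I$, uniformly in $j$. The random variables $X_{j+1}, \ldots, X_{j + N^{1-\alpha}}$ needed to define $A_j$ are always available since we have an infinite i.i.d.\ sequence.

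Since $|P_I| = \sum_{j \in I} \mathbb{1}_{A_j}$, linearity of expectation gives $\mathbb{E}[|P_I|] \leq |I| \exp(-c k^{1-\gamma})$, and Markov's inequality then yields
$$\mathbb{P}[E_I] = \mathbb{P}\!\left[|P_I| > \epsilon_3 |I|\right] \leq \frac{\exp(-c k^{1-\gamma})}{\epsilon_3}.$$
Note that the events $\{A_j\}_{j \in I}$ are \emph{not} independent (they share essentially all of the underlying variables in a window of length $N^{1-\alpha}$), but Markov's inequality does not require independence, which is what makes this step clean.

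Each subfamily $\mathcal{L}_i$ contributes at most one interval of each size $s^k$, so $\mathcal{L}$ contains at most $C$ intervals of size $s^k$ for each sufficiently large $k$. Summing over all $I \in \mathcal{L}$,
$$\sum_{I \in \mathcal{L}} \mathbb{P}[E_I] \leq \frac{C}{\epsilon_3} \sum_{k \geq k_0} \exp(-c k^{1-\gamma}) < \infty,$$
because the stretched-exponential tail $\exp(-c k^{1-\gamma})$ (with $1-\gamma > 0$) is summable in $k$ and $C$ is a fixed constant. The Borel--Cantelli lemma then furnishes the conclusion. The only potential obstacle here is whether Markov's bound is strong enough, but since the tail from Lemma \ref{errorprob2} is already much smaller than any polynomial in $k$ while the multiplicity of $\mathcal{L}$ at each level is only $C$, the Markov bound suffices, and no finer (and more delicate, given the lack of independence) concentration estimate is needed.
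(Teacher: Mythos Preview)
Your proposal is correct and follows essentially the same approach as the paper: bound $\mathbb{P}[A_j]$ via Lemma~\ref{errorprob2} with $n=k$ for $|I|=s^k$, use linearity and Markov to get $\mathbb{P}[E_I]\le \epsilon_3^{-1}\exp(-ck^{1-\gamma})$, then sum over the at most $C$ intervals per level and apply Borel--Cantelli. Your added remarks on the non-independence of the $A_j$ and the sufficiency of Markov's inequality are accurate and align with the paper's implicit reasoning.
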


\begin{proof} We consider an $I \in \mathcal{L}$ of size $s^k$. Then, for each $j \in I$,
\[\mathbb{P}[j \in P_I] \leq exp(-c k^{1-\gamma}),\]
by Lemma \ref{errorprob2}. Hence, $\mathbb{E}[P_I] \leq s^k exp(-c k^{1-\gamma})$. By Markov's inequality,
\[\mathbb{P}[E_I] \leq \frac{\mathbb{E}[|P_I|]}{\epsilon_3 s^k} \leq \frac{1}{\epsilon_3} exp(-c k^{1-\gamma}).\]
We note that for each $k$, there are at most $C$ intervals $\mathcal{L}$ of size $s^k$. Therefore:
\[\sum_{I \in \mathcal{L}} \mathbb{P}[E_I] \leq \frac{C}{\epsilon_3} \sum_{k=0}^\infty exp(-c k^{1-\gamma}) < \infty.\]
Hence, by the Borel-Cantelli Lemma, with probability one only finitely many of the events $E_I$ occur.
\end{proof}

Finally, we will prove Theorem \ref{thm:lowerbound} by considering a sufficiently large $N$ and defining the following partition of $[N]$. We begin by picking out a useful set of disjoint intervals in $\mathcal{L}$. We will denote this set by $S$, and we consider it initially to be empty. There is a unique positive integer $n_0$ such that $1+s+ \cdots +s^{n_0} \leq N < 1+s+\cdots +s^{n_0+1}$. There also exists an $i_0$ such that:
\[ 0 \leq N - \left(1+s +\cdots + s^{n_0} + \frac{i_0 s^{n_0+1}}{C}\right) \leq \frac{s^{n_0+1}}{C}.\]
We add to $S$ the interval of size $s^{n_0}$ in $\mathcal{L}$ which ends at $1+s + \cdots + s^{n_0} + \frac{i s^{n_0+1}}{C}$, and we call this interval $I_1$. Now, there exists some $n_1$ and $i_1$ such that the left endpoint of $I_1$ falls in the range:
\[\left[1+s+ \cdots + s^{n_1}+ \frac{i_1 s^{n_1+1}}{C}, 1+ s+\cdots + s^{n_1} + \frac{(i_1+1)s^{n_1 +1}}{C} \right).\]
We then define $I_2$ to be the interval of size $s^{n_1}$ in $\mathcal{L}$ which ends at $1+s+ \cdots + s^{n_1}+ \frac{i_1 s^{n_1+1}}{C}$. We then consider the left endpoint of $I_2$, and we define $I_3$ analogously, continuing on until we reach a point where the next interval we would like to use does not exist.

When we are finished, $S$ is a finite set of disjoint intervals covering most of the length from 1 to $N$. To see this, note that each time we add an interval of size $s^{n_\ell}$, we insert a gap of size at most $s^{n_\ell+1}/C$. For $C$ chosen sufficiently large with respect to $s$, the gap will be only a small proportion of the length of the interval being added. Thus, we lose only a small fraction of the length of $N$ to these gaps. Additionally, we can afford to ignore the length from 1 up to $1+s+ \cdots + s^{n_0/2}$, since this is $o(s^{n_0})$ and hence $o(N)$. Thus, at least a length of
\begin{equation}\label{length}
N\left(1 - \frac{s}{C}\right) - (1+s+\cdots + s^{n_0/2})
\end{equation}
is contained in intervals in $\mathcal{L}$ of size at least $s^{n_0/2}$.

We now define our partition of $N$, choosing our endpoints iteratively. We start at 0. (Our current position will always be the last element of the interval we just added to the partition, i.e. the last endpoint we choose.) When we are in a gap between intervals in $S$, we choose the next endpoint in our partition to be the end of the gap and we move to the first point of the next interval in $S$. When we are inside an interval in $S$, we let $j$ denote our current position, and $size_j$ denote the size of the interval in $S$. If the event $A_j$ occurs, we add an interval of size 1 to our partition and move to $j+1$. If $\overline{A}_j$ occurs and $j+size_j^{1-\alpha}$ is still within the current interval of $S$, we choose the $i_1$ and $i_2$ which maximize $(S_{i_1+j}-S_j)^2 + (S_{i_2+j} - S_{i_1+j})^2$ for $1\leq i_1\leq i_2 \leq size_j^{1-\alpha}$ as the next two endpoints and move to the point $i_2+j$. If $j+size_j^{1-\alpha}$ lies outside the current interval of $S$, we choose the next endpoint to the be the beginning of the next interval in $S$. We continue in this way until we reach $N$.

Now we must prove a lower bound (almost surely) for the sum of the squared partial sums over the intervals in this partition. We will ignore the gaps (which contribute an amount $\geq 0$), and consider the contributions of the partition pieces which lie inside intervals of $S$. Almost surely, only finitely many of the intervals $I$ in $S$ have $|P_I| > \epsilon_3 |I|$. We assume that $N$ is large enough so that all of the intervals $I \in S$ of size at least $s^{n_0/2}$ have $|P_I| \leq \epsilon_3 |I|$. Thus, the pieces of the partition in each of these intervals $I$ contributes at least
\[2(1-\epsilon_3)^2(|I|-|I|^{1-\alpha}) \ln \ln |I|\]
to the sum of the squared partial sums over the partition. For $|I| \geq s^{n_0/2}$, $\ln \ln |I| \sim \ln \ln N$. Since these intervals cover a length that is at least (\ref{length}), we can choose $\epsilon_3$ small enough with respect to $\epsilon$ and $C$ large enough with respect to $s$ to obtain a contribution that is $\geq (2-\epsilon) N \ln \ln N$. This completes our proof of Theorem \ref{thm:lowerbound}.

\section{A Result for Convergence in Probability}
We now prove the exact asymptotic holds for convergence in probability when the variables $X_i$ only satisfy $Var[X_i]=1$ and not any higher moment condition.

\begin{theorem}\label{thm:uboundinprob} We let $X_1, X_2, \ldots$ denote independent, identically distributed random variables with mean zero and variance equal to one. Then for every $\epsilon, \delta >0$,
\[\mathbb{P}\left[ \left|\left|\{X_i\}^N\right|\right|^2_{V^2} > 2(1+\epsilon)N \ln \ln N\right] < \delta\]
for all sufficiently large $N$, and similarly,
\[\mathbb{P}\left[\left|\left|\{X_i\}^N\right|\right|^2_{V^2} < 2(1-\epsilon)N \ln \ln N\right] < \delta\]
for all sufficiently large $N$.
\end{theorem}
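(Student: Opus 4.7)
The plan is to reduce to the bounded case by a fixed-level truncation, apply the almost-sure asymptotic of Theorem \ref{thm:ourtheorem} to the truncated part, and handle the small-variance tail via a second, $N$-dependent truncation. To avoid a clash with the moment exponent $\delta$ used elsewhere in the paper, denote the target error probability in the statement by $\delta_0$.

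Fix $\epsilon > 0$. Since $\mathbb{E}[X_i^2]=1<\infty$, choose a constant $M=M(\epsilon)$ so large that $\mathbb{E}[X_i^2\,\mathbb{I}_{|X_i|>M}] < \epsilon/100$. Set $\overline{X}_i := X_i\,\mathbb{I}_{|X_i|\le M}$, $Y_i := \overline{X}_i - \mathbb{E}[\overline{X}_i]$ (bounded, mean zero, $Var[Y_i] \ge 1-\epsilon/100$), and $W_i := X_i - Y_i$ (mean zero, $Var[W_i] \le \epsilon/100$). The triangle inequality for $||\cdot||_{V^2}$ yields
\[ \bigl|\, ||\{X_i\}^N||_{V^2} - ||\{Y_i\}^N||_{V^2} \bigr| \;\le\; ||\{W_i\}^N||_{V^2}. \]

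The bounded piece is easy: $Y_i/\sqrt{Var[Y_i]}$ are i.i.d., mean zero, variance one, and bounded (so they satisfy every $2+\delta$ moment condition), and Theorem \ref{thm:ourtheorem} gives $||\{Y_i\}^N||_{V^2}^2/(N\ln\ln N) \to 2\,Var[Y_i]$ almost surely, hence in probability. The main obstacle is to bound $||\{W_i\}^N||_{V^2}^2$ in probability, because $W_i$ is not assumed to have any moment beyond the second and Theorems \ref{thm:ubound} and \ref{thm:Qianupper} do not apply directly. For this I plan to truncate a second time at an $N$-dependent level: let $M_N := \sqrt{N}$ and let $W_i^{(N)}$ be the centered truncation of $W_i$ at $M_N$. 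Since $\mathbb{E}[W_1^2]<\infty$ implies $t^2\,\mathbb{P}[|W_1|>t]\to 0$ as $t\to\infty$, the probability that $\max_{i\le N}|W_i|>M_N$ (at most $N\,\mathbb{P}[|W_1|>M_N]$) tends to zero, so on a set of probability tending to one we have $||\{W_i\}^N||_{V^2}=||\{W_i^{(N)}\}^N||_{V^2}$. The sequence $\{W_i^{(N)}\}$ is bounded by $2M_N$, so $\mathbb{E}[|W_i^{(N)}|^{2+\delta'}]\le (2M_N)^{\delta'}\,Var[W_i]$ for every $\delta'>0$. Re-running the proof of Theorem \ref{thm:ubound} for $\{W_i^{(N)}\}$ and tracking how the constants coming out of Lemma \ref{lem:expectation} (Rosenthal) and Lemma \ref{lem:berry-esseen} (Berry--Esseen) depend on this effective moment, one checks that for $\delta'$ sufficiently small the polynomial-in-$N$ factor $(2M_N)^{\delta'}$ is dominated by the $|I|$-gains in those estimates, yielding
\[ \mathbb{P}\bigl[\,||\{W_i^{(N)}\}^N||_{V^2}^2 > C\,Var[W_i]\,N\ln\ln N\,\bigr]\longrightarrow 0 \]
for some absolute constant $C$.

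Combining these two estimates through the displayed triangle inequality, and taking $M$ large enough that $C\,Var[W_i] < \epsilon^2/16$, the tail probabilities of $||\{X_i\}^N||_{V^2}^2/(N\ln\ln N)$ outside $(2(1-\epsilon),\,2(1+\epsilon))$ tend to zero and are therefore eventually less than $\delta_0$, giving both inequalities asserted in Theorem \ref{thm:uboundinprob}.
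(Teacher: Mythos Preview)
Your first two steps---fixed-level truncation and applying Theorem~\ref{thm:ourtheorem} to the bounded part---coincide with the paper's argument. The divergence is in how the tail $W_i$ (the paper's $Z_i$) is handled. The paper does not attempt a second, $N$-dependent truncation or re-run its own machinery: it simply invokes Qian's in-probability upper bound (Theorem~\ref{thm:Qianupper}) as a black box on the centered tail $Z_i-\mathbb{E}[Z_i]$, obtaining
\[
\mathbb{P}\bigl[\,\|\{Z_i-\mathbb{E}[Z_i]\}^N\|_{V^2}^2 > c'\,Var[Z_i]\,N\ln\ln N\,\bigr]\to 0
\]
in one line and finishing via the triangle inequality. (The remark following Theorems~\ref{thm:Qianlower}--\ref{thm:Qianupper} asserts that this result of Qian needs only finite variance, which is exactly what the tail has; so your statement that Theorem~\ref{thm:Qianupper} ``does not apply directly'' to $W_i$ is where you part ways with the paper.)

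Your alternative route has a genuine gap. The claim that ``for $\delta'$ sufficiently small the polynomial-in-$N$ factor $(2M_N)^{\delta'}$ is dominated by the $|I|$-gains'' does not survive inspection. With $M_N=\sqrt N$, the effective $(2{+}2\delta')$-moment of $W_i^{(N)}$ is of order $N^{\delta'}$, so in Lemma~\ref{lem:expectation} Rosenthal's inequality gives $\mathbb{E}[|S_I|^{2(1+\delta')}]\lesssim |I|\,N^{\delta'}$ (this branch, not $(|I|\sigma_W^2)^{1+\delta'}$, dominates for \emph{every} $|I|\le N$), so the constant $C$ there grows like $N^{\delta'}$. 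More fatally, the Berry--Esseen error in the bound on $\mathbb{P}[\mathbb{I}_{Bad,I}=1]$ becomes of order $N^{\delta'}/k^{\delta'}\ge 1$ for all $k\le N$, so Lemma~\ref{lem:berry-esseen} yields no information at all. No fixed $\delta'>0$ rescues this, since a positive power of $N$ cannot be absorbed by the polylogarithmic savings in (\ref{expectedbad})--(\ref{finite3}); and letting $\delta'\downarrow 0$ with $N$ kills the decay $k^{-\delta'}$ entirely. The same obstruction hits the medium-interval argument of Section~\ref{sec:medium}: in Lemma~\ref{lem:max} and Corollary~\ref{cor:max} the term $Mt/3$ with $M=2\sqrt N$ dominates the variance term in the exponent, destroying the sub-Gaussian tail. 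What you are asserting about the tail is precisely the content of Qian's theorem, and the sketch does not supply an independent proof of it.
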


\begin{proof} We will use the results of \cite{Q98} along with truncation, writing $X_i = \overline{X}_i + Z_i$, where $\overline{X}_i := \mathbb{I}_{|X_i|\leq M}$ and $M$ is chosen sufficiently large so that $Var[\overline{X}_i]$ is close to 1, and $\mathbb{E}[Z_i^2] =\epsilon' < \frac{1}{c'} \epsilon_1$, where $c'$ is the value from Theorem \ref{thm:Qianupper} and $\epsilon_1, \epsilon_2>0$ are chosen so that $\sqrt{\epsilon_1} + \sqrt{1+\epsilon_2} \leq \sqrt{1+\epsilon}$. Then, applying Theorem \ref{thm:Qianupper} to the mean zero variables $Z_i - \mathbb{E}[Z_i]$, we have (for all sufficiently large $N$):
\[\mathbb{P}\left[ \left|\left|\{Z_i-\mathbb{E}[Z_i]\}^N\right|\right|^2_{V^2} > \epsilon_1(2N\ln \ln N)\right] < \frac{\delta}{2}.\]
By Egorov's Theorem, convergence almost surely implies convergence in probability, so we can apply Theorem \ref{thm:ourtheorem} to the bounded variables $\overline{X}_i - \mathbb{E}[\overline{X}_i]$ to obtain (for all sufficiently large $N$):
\[\mathbb{P}\left[ \left|\left|\{\overline{X}_i-\mathbb{E}[\overline{X}_i]\}^N\right|\right|^2_{V^2}> (1+\epsilon_2)2N\ln \ln N\right] < \frac{\delta}{2}. \]
Since $\sqrt{\epsilon_1} + \sqrt{1+\epsilon_2} \leq \sqrt{1+\epsilon}$, the triangle inequality implies that:
\[\mathbb{P}\left[\left|\left|\{X_i\}^N\right|\right|^2_{V^2} > 2(1+\epsilon)N \ln \ln N\right] < \delta.\]
The second bound can be proven by an analogous argument applying Theorem \ref{thm:ourtheorem} to $\overline{X}_i$, Theorem \ref{thm:Qianupper} to $Z_i$, and employing $\left|\left|\{X_i\}^N\right|\right|^2_{V^2} \geq \left|\left|\{\overline{X}_i-\mathbb{E}[\overline{X}_i]\}^N\right|\right|^2_{V^2} - \left|\left|\{Z_i-\mathbb{E}[Z_i]\}^N\right|\right|^2_{V^2}$.
\end{proof}

\texttt{A. Lewko, Department of Computer Science, The University of Texas at Austin}

\textit{alewko@cs.utexas.edu}
\vspace*{0.5cm}

\texttt{M. Lewko, Department of Mathematics, The University of Texas at Austin}

\textit{mlewko@math.utexas.edu}

\appendix

\section{Proof of Lemma \ref{lem:max}}\label{app:doob}
We first note Doob's inequality:

\begin{lemma}(Doob's Inequality \cite{D}) Let $\{M_{i}\}_{i=1}^{L}$ be a submartingale taking non-negative real values. Then
\begin{equation}
\mathbb{P} \left[ \sup_{0\leq \ell \leq L} M_{\ell} \geq t \right] \leq \frac{\mathbb{E}\left[M_{L}\right] }{t}.
\end{equation}
\end{lemma}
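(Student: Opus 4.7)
The plan is to execute the classical first-passage proof of Doob's maximal inequality. I would introduce the stopping time
\[\tau := \min\{\ell \in \{1, \ldots, L\} : M_\ell \geq t\},\]
with the convention $\tau = L$ if no such index exists. Setting $A := \{\sup_{1 \leq \ell \leq L} M_\ell \geq t\}$, partitioning by the first-passage time gives the disjoint decomposition $A = \bigcup_{k=1}^{L}(\{\tau = k\} \cap A)$, and by construction $M_k \geq t$ on $\{\tau = k\} \cap A$. Each event $\{\tau = k\} = \{M_1 < t, \ldots, M_{k-1} < t, M_k \geq t\}$ is $\mathcal{F}_k$-measurable, which is the only measurability fact I need.

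The core step is a single application of the submartingale inequality $\mathbb{E}[M_L \mathbf{1}_E] \geq \mathbb{E}[M_k \mathbf{1}_E]$ for $E \in \mathcal{F}_k$, $k \leq L$. Taking $E := \{\tau = k\} \cap A$ and using $M_k \geq t$ there:
\[\mathbb{E}\left[M_L \mathbf{1}_{\{\tau=k\}\cap A}\right] \geq \mathbb{E}\left[M_k \mathbf{1}_{\{\tau=k\}\cap A}\right] \geq t\, \mathbb{P}\left[\{\tau=k\}\cap A\right].\]
Summing over $k = 1, \ldots, L$ and using nonnegativity of $M_L$ to drop $\mathbf{1}_A$ in the final step,
\[\mathbb{E}[M_L] \geq \mathbb{E}\left[M_L \mathbf{1}_A\right] = \sum_{k=1}^L \mathbb{E}\left[M_L \mathbf{1}_{\{\tau=k\}\cap A}\right] \geq t\, \mathbb{P}[A].\]
Dividing by $t > 0$ yields $\mathbb{P}[A] \leq \mathbb{E}[M_L]/t$, as claimed.

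There is no substantive obstacle: the argument is textbook, and the only points needing a modicum of care are (i) using the submartingale property rather than the martingale property (so that expectations grow, not stay fixed, along the filtration), and (ii) exploiting nonnegativity of $M_L$ when enlarging the conditioning event from $A$ to the whole sample space. Equivalently, one may repackage the same content via the optional stopping theorem applied to the bounded stopping time $\tau$, giving $\mathbb{E}[M_\tau] \leq \mathbb{E}[M_L]$, combined with the pointwise bound $M_\tau \geq t\, \mathbf{1}_A$ plus nonnegativity.
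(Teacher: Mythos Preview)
Your argument is the standard first-passage (stopping time) proof of Doob's maximal inequality and is correct; the only cosmetic point is that the lemma as stated indexes the supremum from $\ell=0$, so one should either include $M_0$ in the definition of $\tau$ or note that adjoining a constant $M_0 \leq \mathbb{E}[M_1]$ does not affect the bound. The paper itself does not prove this lemma at all---it is merely quoted with a citation to Doob's book \cite{D} and then applied in the proof of Lemma~\ref{lem:max}---so there is no ``paper's own proof'' to compare against; your write-up simply fills in a classical fact that the authors take as known.
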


We now prove Lemma \ref{lem:max}. Let $g(y) = 2 \sum_{l=2}^{\infty}\frac{y^{l-2} }{l!} = \frac{2(e^y - 1 -y)}{y^2}$.
Now
\[\mathbb{E} \left[ e^{h \sum_{i=1}^L X_i} \right] = \prod_{i=1}^{L} \mathbb{E}[ e^{h X_i}] = \prod_{i=1}^{L} \mathbb{E} \left[ \sum_{k=0}^{\infty} \frac{h^k X_{i}^k}{k!} \right] \]
\[= \prod_{i=1}^{L} \mathbb{E} \left[1 + h(X_i) + \frac{1}{2}h^2 X_i^2 g(h X_i) \right]\leq  \prod_{i=1}^{L}  \left( 1 + h\mathbb{E}[X_i] + \frac{1}{2}h^2 \mathbb{E} [X_i^2] g(h M) \right) \]
\[\leq e^{h\sum_{i=1}^L \mathbb{E}[X_i] + \frac{1}{2} h^2 \sum_{i=1}^L \mathbb{E}[X_{i}^2] g(hM)}
=  e^{\frac{1}{2}h^2 g(hM) \sum_{i=1}^{L}\mathbb{E}[X_{i}^2] } .\]

We have:

\[\mathbb{P}\left[ \sum_{i=1}^{L} X_i \geq t \right] = \mathbb{P} \left[  e^{h\sum_{i=1}^{L} X_i} \geq e^{ht} \right] \]

and, more generally:

\[\mathbb{P}\left[ \max_{1\leq \ell \leq L}  \sum_{i=1}^{\ell} X_i \geq t \right] = \mathbb{P} \left[ \max_{1\leq \ell \leq L} e^{h\sum_{i=1}^{\ell} X_i} \geq e^{ht} \right].\]

Since $M_{\ell} := e^{h\sum_{i=1}^{\ell} X_i}$ forms a submartingale sequence, Doob's inequality yields:

\[\mathbb{P}\left[ \max_{1\leq \ell \leq L}  \sum_{i=1}^{\ell} X_i \geq t \right] \leq e^{-h t} \mathbb{E} \left[ e^{h \sum_{i=1}^L X_i} \right] \leq e^{-h t}e^{\frac{1}{2}h^2 g(hM) \sum_{i=1}^{L}\mathbb{E}[X_{i}^2] }  . \]

Using the fact that $g(y) \leq \frac{1}{1-y/3}$ for $y < 3$ (which follows from the Taylor expansion given above), we have

\[\mathbb{P}\left[ \max_{1\leq \ell \leq L}  \sum_{i=1}^{\ell} X_i \geq t \right] \leq e^{\frac{\frac{1}{2}h^2 \sum_{i=1}^{L}\mathbb{E}[X_{i}^2]}{1- \frac{hM}{3}} - h t}\]
when $hM < 3$.

Taking $h = \frac{t}{\sum \mathbb{E}X_{i}^2 + Mt/3}$ (which satisfies $hM <3$), we have:

\[ exp\left(\frac{\frac{1}{2}h^2 \sum_{i=1}^{L}\mathbb{E}[X_{i}^2]}{1- \frac{hM}{3}} - h t\right) = \]
\[exp \left({\frac{\frac{1}{2} t^2 \sum_{i=1}^{L}\mathbb{E}[X_{i}^2]}{\left(\sum_{i=1}^{L}\mathbb{E}[X_{i}^2] + Mt /3\right)^2\left(1 - \frac{t M}{3\left(\sum_{i=1}^{L}\mathbb{E}[X_{i}^2] + Mt /3\right)}\right)}} - \frac{t^2}{\sum_{i=1}^{L}\mathbb{E}[X_{i}^2]+M t/3}\right)\]
\[ = exp \left( \frac{\frac{1}{2} t^2 \sum_{i=1}^{L}\mathbb{E}[X_{i}^2]}{\left(\sum_{i=1}^{L}\mathbb{E}[X_{i}^2] + M t/3\right) \sum_{i=1}^{L}\mathbb{E}[X_{i}^2]} - \frac{t^2}{\sum_{i=1}^{L}\mathbb{E}[X_{i}^2] + M t/3}\right)\]
\[ = exp\left(\frac{- \frac{1}{2} t^2}{\sum_{i=1}^{L}\mathbb{E}[X_{i}^2] + M t/3}\right).\]

This establishes that
\[\mathbb{P}\left[ \max_{1\leq \ell \leq L} \sum_{i=1}^{l} X_{i} > t \right] \leq e^{\frac{-t^2/2}{\sum_{i=1}^L\mathbb{E}[X_i^2] + Mt/3}}.\]
Applying this result to the random variables $\{Y_{i}\}$ where $Y_{i}=-X_{i}$, we have
\[\mathbb{P}\left[ \max_{1\leq \ell \leq L} \sum_{i=1}^{l} X_{i} < -t \right] \leq e^{\frac{-t^2/2}{\sum_{i=1}^L\mathbb{E}[X_i^2] + Mt/3}}.\]
Combining these, we obtain the desired estimate
\[\mathbb{P}\left[ \max_{1\leq \ell \leq L} \left| \sum_{i=1}^{l} X_{i} \right| > t \right] \leq 2 e^{\frac{-t^2/2}{\sum_{i=1}^L\mathbb{E}[X_i^2] + Mt/3}}.\]

\end{document}